\DeclareMathOperator{\supp}{supp}
\newcommand{\R}{\textnormal{I}\!\textnormal{R}}
\newcommand{\cjap}[1]{\langle {#1}\rangle}
\newcommand{\FT}{\mathcal{F}}
\newcommand{\IFT}{\mathcal{F}^{\!^{-1}}}
\def\smath#1{\text{\scalebox{.8}{$#1$}}}
\def\pow{^\smath}
\newcommand{\norma}[2]{\left\|{#1}\right\|_{\!_{#2}}}           
\newcommand{\normaNp}[1]{\left\|{#1}\right\|_{\!\mathcal{N}\pow{-p/2}}}
\newcommand{\normaSp}[1]{\left\|{#1}\right\|_{\!\mathcal{S}\pow{-p/2 } } }
\newcommand{\normaXp}[1]{\left\|{#1}\right\|_{\!X_p\pow {-\frac{p}{2},-\frac{1}{2},1}}}
\newcommand{\normaYp}[1]{\left\|{#1}\right\|_{\!Y_p\pow {-p/2,-1/2}}}
\newcommand{\PN}{P_{N} }
\newcommand{\PNU}{P_{{N_1}} }
\newcommand{\PND}{P_{{N_2}} }
\newcommand{\phiN}{\varphi_{\!_{N}}} 
\newcommand{\phiL}{\varphi_{\!_{L}}} 
\newcommand{\QL}{Q_{L}}
\newcommand{\QLU}{Q_{L_1}}
\newcommand{\QLD}{Q_{L_2}}
\numberwithin{equation}{section}
\newtheorem{theorem}{Theorem}[section]
\newtheorem{proposition}[theorem]{Proposition}
\newtheorem{remark}[theorem]{Remark}
\newtheorem{lemma}[theorem]{Lemma}
\newtheorem{corollary}[theorem]{Corollary}
\begin{document}

	\title[Sharp  well-posedness and ill-posedness results for the generalized KdV-B equation on the real line]
	{Sharp   well-posedness  and ill-posedness results for dissipative KdV equations on the real line}
	
	\author[ X. Carvajal]
	{Xavier Carvajal}
\address{Xavier Carvajal}                  
\email{carvajal@im.ufrj.br}

\author[ P. Gamboa] {Pedro Gamboa}
	\address{Pedro Gamboa}                  
	\email{pgamboa@im.ufrj.br}
	
	\author[ R. Santos]
	{Raphael Santos	}
\address{Raphael Santos}                  
	\email{raphaelsantos@macae.ufrj.br}


	\subjclass[2010]{35E15, 35M11, 35Q53, 35Q60.}
	\keywords{generalized KdV-Burgers equation. well-posedness. ill-posedness.}
	
	\begin{abstract}
		
		This work is concerned about the Cauchy problem for the following generalized KdV- Burgers equation
		\begin{equation*}
		\left\{\begin{array}{l}
		\partial_tu+\partial_x^3u+L_pu+u\partial_xu=0,\\
		u(0,\,x)=u_0(x).
		\end{array}
		\right.
			\end{equation*} 
		where $L_p$ is a dissipative multiplier operator.  Using Besov-Bourgain Spaces, we establish a bilinear estimate and following the framework developed in \cite{MV} we prove sharp local and global well-posedness in the Sobolev spaces $H^{-p/2}(\R)$ and  ill-posedness in $H^s(\R)$ when $s<-p/2$, both when  $p \geq 2$. Also, we prove $C^2$-ill-posedness in $H^s(\R)$, for $s< 3/2-p/4$ and $0\leq p\leq 2$.
	\end{abstract}
	
	\maketitle
	
	\allowdisplaybreaks
	
	\section{Introduction}
	
	In this paper we study the well-posedness of the generalized Korteweg-De Vries- Burgers equation
	\begin{equation}\tag{g-KdV-B}\label{g-KdV-B}
	\partial_tu+\partial_x^3u+L_pu+u\partial_xu=0,\quad x\in\R,\; t\geq 0,
	\end{equation}
	where $u=u(t,x)$ is a real-valued function  and $\FT_x \{L_pu\}(t,\xi)=|\xi|^p\FT_x u(t,\xi)$, for $p\in \R^{+}$. When $p=2$ we have the well-known   KdV - Burgers equation. This equation arises in some different physical contexts as a model equation involving the effects of dispersion, dissipation and nonlinearity. When $p=1/2$ the related equation models the evolution of the free surface for shallow water waves damped by viscosity. For these models, see e.g. \cite{JON}, \cite{KM}, and \cite{OS}.
	
	The well-posedness for the equation \eqref{g-KdV-B} has been studied for many authors. In 2001, using the  Bourgain spaces, related only to the KdV equation (see e.g. \cite{bourgain} and \cite{ginibre}), and the bilinear estimate due to Kenig, Ponce and Vega (see \cite{KPV}), Molinet and Ribaud obtained the  local  and global well-posedness  in $H^s(\R)$, for $s>-3/4$ and $p>0$. In the particular case of $p=2$ (KdV-Burgers equation), they proved local and global well-posedness in $H^s(\R)$, for $s> -3/4 -1/24$ (see \cite{MR1}). In 2002, they improved the result when $p=2$, by using the Bourgain space but now, associated to the KdV-Burgers equation, getting local and global well-posedness in $H^s(\R)$, for $s>-1$ (see \cite{MR}). Also, in this paper they pointed out that the Cauchy problem \eqref{g-KdV-B}, with $0\leq p\leq 2$ is ill-posed in the homogeneous Sobolev
	space $\textrm{\.H}^s(\R)$ for $s<s_p$, where $s_p=(p-6)/2(4-p)$, and conjectured that $H^{s_p}(\R)$ is the critical Sobolev spaces and the Cauchy problem for \eqref{g-KdV-B} is
	well-posed in $H^s(\R)$ for $s>s_p$. 	In 2010, Xue and Hu proved the local well-posedness (l.w.p.) for the \eqref{g-KdV-B} in the homogeneous Sobolev spaces $\textrm{\.H}^s(\R)$, with $(p-6)/2(4-p)<s\leq 0$, when $0\leq p\leq 2$, giving a partial answer for this open problem (see \cite{XH}). In 2011, Vento proved local and global well-posedness for the \eqref{g-KdV-B} in $H^s(\R)$, for $s>s_p$ where
	\begin{equation}\label{V01}
	s_p=\left\{\begin{array}{l}
	-3/4,\ \ \ \ 0<p\leq1,\\
	-3/(5-p),\ \ \ \ 1<p\leq 2,
	\end{array}
	\right.
	\end{equation}
	improving the early results in  the case  $1<p<2$ (see \cite{V}). Also, in 2011 Molinet and Vento completes the result for the KdV - Burgers equation ($p=2$), using the Besov refinement of Bourgain's spaces. They obtained the sharp g.w.p. in $H^{-1}(\R)$ (see \cite{MV}). In 2016 Carvajal and Mahendra studied, among other things,  the well-posedness of the following dissipative versions of the generalized KdV equation

	\begin{equation}\label{XM01}
	\left\{\begin{array}{l}
	v_t + v_{xxx} + \eta Lv + (v^{2})_x = 0, \quad x\in \R,\ t\geq 0,\\
	v(x, 0) = v_0(x),
	\end{array}
	\right.
	\end{equation}
	 where $\eta>0$ and the linear operator $L$ is defined via the Fourier transform by $\FT_x\{Lf\}=\Phi(\cdot)\FT_xf$, where the symbol 
	 \begin{equation}\label{XM02}
	\Phi(\xi)=|\xi|^p+\Phi_1(\xi),
	\end{equation}
	where $p$ is a positive real number and $|\Phi_1(\xi)|\lesssim 1+|\xi|^q$, with $0\leq q<p$.  They proved that the Cauchy problem for \eqref{XM01} is locally well-posed in $H^{s}(\R)$, $s>-p/2$, with $p>3$. Also, they showed that for $p\geq 2$, there does not exist any $T>0$ such that the data-solution map $v_0\in H^s(\R)\mapsto v\in C([0,\,t]:H^s(\R))$ is  $C^2$- differentiable at the origin (see \cite{CM}).  When the nonlinearity in \eqref{XM01}  is $(v^{k+1})_x$, $k>1$ (generalized KdV nonlinearity), they obtain some  local well-posedness results  for the data with Sobolev regularity below $L^2(\R)$, see \cite{CM1}. Also, an n-dimensional dissipative version of the KdV equation \eqref{XM01} was considered in Carvajal, Esfahani and Panthee \cite{CMA}, where they prove well-posedness and ill-posedness results in anisotropic Sobolev spaces, they also study the dissipative limit of the solution when $\eta$ goes to zero. Finally, in \cite{CE} Carvajal and Esquivel proved local well-posedness for \eqref{g-KdV-B} in $H^s(\R)$ for $s>-p/2$, when $2\leq p< 3$, improving the result in \cite{MR1}. In the next figure we have a  resume of the former results.\\
	\begin{figure}[h]
		\centering
	\includegraphics[scale=0.25]{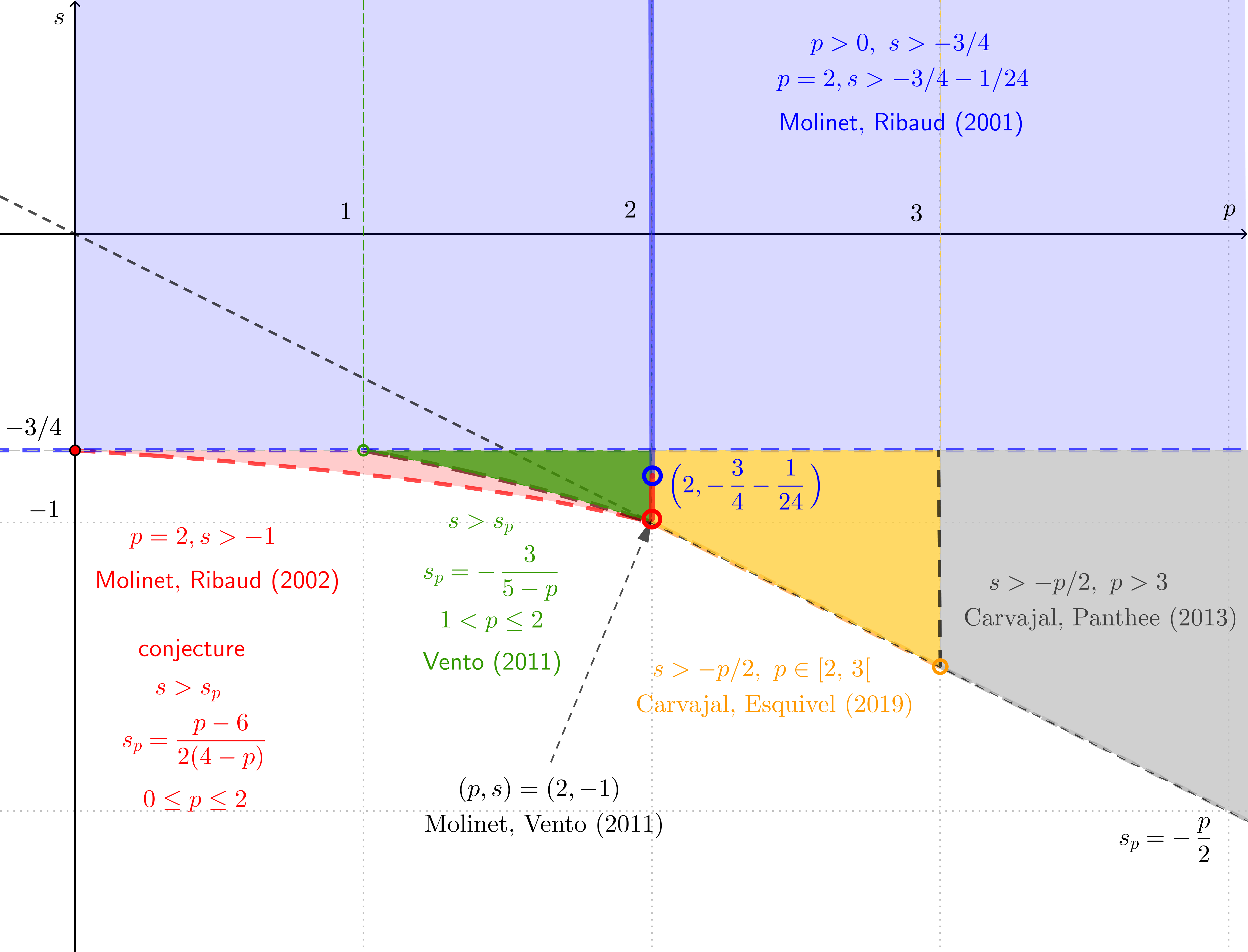}
	\end{figure}

	In our work, we use the framework developed in \cite{MV} to establish the following results: 
		
	\begin{theorem}\label{teo1}
		Let $p\geq 2$.
		The Cauchy problem associated to \eqref{g-KdV-B} is locally analytically well-posed in $H^{-p/2}(\mathbb{R})$. Moreover, at every point $u_0 \in H^{-p/2}(\R)$ there exist $T=T(u_0)>0$ and $R=R(u_0)>0$ such that the solution-map $u_0 \to u$ is analytic from the ball centered at $u_0$ with radius $R$ of $H^{-p/2}(\R)$ into $C([0,T]; H^{-p/2}(\R) )$. Also, the solution $u$ belongs to $C((0,\infty); H^{\infty}(\R) )$.
	\end{theorem}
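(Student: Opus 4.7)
The plan is to follow the framework developed by Molinet and Vento for the KdV--Burgers equation and extend it to the full range $p\geq 2$ of dissipation exponents. I would recast the problem in its Duhamel form
\[
u(t) = S_p(t) u_0 - \frac{1}{2}\int_0^t S_p(t-t')\,\partial_x(u^2)(t')\,dt',
\]
where $S_p(t)=e^{-t(\partial_x^3+L_p)}$ is the linear KdV--Burgers semigroup, and then solve it by contraction in a Besov--Bourgain resolution space $X_p^{-p/2,-1/2,1}$ whose weight is adapted to the full symbol $i(\tau-\xi^3)+|\xi|^p$, together with the companion energy space $Y_p^{-p/2,-1/2}$ that controls the $C([0,T];H^{-p/2}(\R))$-norm of the solution. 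The $\ell^1$-Besov summation in the dyadic modulation variable is essential to reach the endpoint regularity $s=-p/2$, where the classical $X^{s,b}$ method fails by a logarithm.

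The work then splits into three pieces. First, one verifies the linear estimates: $u_0\mapsto \eta(t)S_p(t)u_0$ is bounded from $H^{-p/2}(\R)$ into the resolution space, and the Duhamel operator $F\mapsto \eta(t)\int_0^t S_p(t-t')F(t')\,dt'$ maps the predual source space into the resolution space; these follow from the semigroup bound $|e^{-t(i\xi^3+|\xi|^p)}|=e^{-t|\xi|^p}$ together with standard dyadic manipulations. Second, and this is the key step, one establishes the bilinear estimate
\[
\|\partial_x(uv)\|_{\text{source}} \,\lesssim\, \|u\|_{\text{resolution}}\,\|v\|_{\text{resolution}},
\]
by a Littlewood--Paley decomposition in both the spatial frequency and the modulation, driven by the KdV resonance identity $(\tau-\xi^3)-(\tau_1-\xi_1^3)-(\tau_2-\xi_2^3)=3\xi\xi_1\xi_2$ with $\xi=\xi_1+\xi_2$ and $\tau=\tau_1+\tau_2$. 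In each frequency-modulation regime the dissipative gain $|\xi|^p$ is used to compensate the $\partial_x$-loss from the nonlinearity together with any missing dispersive smoothing. Third, with the linear and bilinear estimates in hand, a standard Picard iteration produces a unique local solution on $[0,T]$ with $T=T(u_0)$; because the solution appears explicitly as a power series in $u_0$, the solution map is real-analytic on a ball of radius $R(u_0)$ around $u_0$, exactly as stated.

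The main obstacle is the bilinear estimate at the sharp regularity $s=-p/2$. The threshold $p\geq 2$ emerges precisely where the dissipative smoothing of order $p$ balances the derivative loss together with the worst-case resonance in the High-Low $\to$ High and High-High $\to$ Low interactions; any weakening of either the dissipation or the Besov $\ell^1$-summation would forbid reaching the endpoint. Finally, the smoothing conclusion $u\in C((0,\infty);H^{\infty}(\R))$ would be obtained by a bootstrap on the Duhamel formula: the free term $S_p(t)u_0$ lies in $H^{\infty}(\R)$ for every $t>0$ because $e^{-t|\xi|^p}$ decays faster than any polynomial when $p\geq 2$, and the Duhamel contribution gains an arbitrary amount of regularity on any subinterval $[\varepsilon,T]\subset(0,T]$ by iterating the bilinear estimate against the same super-polynomial smoothing of $S_p(t-t')$, so that $u(t)\in H^{s'}(\R)$ for every $s'\in\R$ and every $t>0$.
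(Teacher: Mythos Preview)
Your outline correctly identifies the overall architecture of the argument---Besov-refined Bourgain spaces adapted to the full symbol, linear estimates for the free and Duhamel terms, and the bilinear estimate driven by the KdV resonance identity---and this matches the paper. (A minor slip: the resolution space carries the index $b=1/2$, not $-1/2$; it is $\mathcal{S}^{-p/2}=X_p^{-p/2,1/2,1}+Y_p^{-p/2,1/2}$, with the $-1/2$ versions forming the source space $\mathcal{N}^{-p/2}$.)

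There is, however, a genuine gap in your passage from the bilinear estimate to local well-posedness for \emph{arbitrary} data. In the endpoint framework $b=1/2$, the bilinear estimate
\[
\|\partial_x(uv)\|_{\mathcal{N}^{-p/2}}\lesssim \|u\|_{\mathcal{S}^{-p/2}}\|v\|_{\mathcal{S}^{-p/2}}
\]
carries \emph{no} small factor of $T$; the usual trick of extracting $T^{1/2-b}$ from a time cutoff fails precisely at $b=1/2$. Hence a ``standard Picard iteration'' yields a contraction only for small $\|u_0\|_{H^{-p/2}}$, not for every $u_0$ as the theorem asserts. The paper (following Molinet--Vento) closes this gap by two additional ingredients: a variant bilinear estimate $\|\partial_x(uv)\|_{\mathcal{N}^{-p/2}}\lesssim T^{\nu}\|u\|_{\mathcal{S}^{0}}\|v\|_{\mathcal{S}^{-p/2}}$ that \emph{does} gain a power of $T$ when one factor is placed in the stronger $\mathcal{S}^0$ norm, and a modified resolution norm $\|u\|_{\mathcal{Z}_\beta}=\inf\{\|u_1\|_{\mathcal{S}^{-p/2}}+\beta^{-1}\|u_2\|_{\mathcal{S}^0}:u=u_1+u_2\}$ combined with a frequency splitting $u_0=P_{\lesssim N}u_0+P_{\gg N}u_0$. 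The high-frequency piece is made small in $H^{-p/2}$ by choosing $N$ large, while the low-frequency piece is controlled in $\mathcal{S}^0$ through the parameter $\beta$; tuning $\beta$ and then $T=T(\beta)$ produces the contraction. This device is specific to the endpoint $\ell^1$-Besov setting and cannot be absorbed into the phrase ``standard Picard iteration''.

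Your sketch of the $H^{\infty}$ smoothing is a plausible alternative, but the paper takes a shorter route: it uses the embedding $\mathcal{S}^{-p/2}\hookrightarrow L^2_{t,x}$ to find some $t'\in(0,t)$ with $u(t')\in L^2_x$, and then invokes earlier $L^2$ well-posedness results (which already include $H^{\infty}$ smoothing) rather than bootstrapping the bilinear estimate directly.
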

	
	\begin{theorem}\label{ill}
	Let $p\geq 2$.
		The Cauchy problem associated to \eqref{g-KdV-B} is ill- posed in $H^{s}(\R)$ for $s<-p/2$: there exist $T>0$  such that for any $t\in [0,T]$ the flow-map $u_0 \to u(t)$ constructed in Theorem \ref{teo1} is discontinuous at the origin from  $H^{s}(\R)$ to $H^{s}(\R)$. 
	\end{theorem}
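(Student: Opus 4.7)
The plan is to follow the strategy developed in \cite{MV} for the KdV-Burgers case ($p=2$): exhibit a sequence of initial data $\phi_N \in H^{-p/2}(\R)$ converging to $0$ in $H^s(\R)$ whose associated solutions $u_N$ provided by Theorem \ref{teo1} satisfy $\|u_N(t)\|_{H^s(\R)} \not\to 0$ at some fixed $t>0$, thereby precluding continuity of the flow-map at the origin.

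I would take $\phi_N$ real-valued with Fourier transform $\widehat{\phi_N}(\xi) = N^{p/2}\bigl(\chi_{[N,N+1]}(\xi) + \chi_{[-N-1,-N]}(\xi)\bigr)$, so that $\|\phi_N\|_{H^{-p/2}(\R)} \sim 1$ while $\|\phi_N\|_{H^s(\R)} \sim N^{s+p/2}\to 0$ since $s<-p/2$. Replacing $\phi_N$ by $\epsilon\phi_N$ for a small fixed $\epsilon>0$ if necessary, Theorem \ref{teo1} supplies a convergent Picard expansion $u_N = \sum_{k\ge 1}\epsilon^k u_N^{(k)}$ in the Bourgain-Besov norm on a common interval $[0,T]$, where $u_N^{(1)}(t)=S(t)\phi_N$. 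The linear term satisfies $\|u_N^{(1)}(t)\|_{H^s}\lesssim e^{-tN^p}\|\phi_N\|_{H^s}\to 0$ by the dissipation at frequency $N$, and the tail $\sum_{k\ge 3}\epsilon^k\|u_N^{(k)}(t)\|_{H^s}$ is controlled by a small power of $\epsilon$ via the bilinear estimate underlying Theorem \ref{teo1}.

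The crux of the argument is a lower bound $\|u_N^{(2)}(t)\|_{H^s}\gtrsim 1$ uniformly in $N$ for some fixed $t\in(0,T]$, obtained by isolating the low output frequencies in the Duhamel representation
\[
\widehat{u_N^{(2)}}(t,\xi)= -\tfrac{i\xi}{2}\int_0^t e^{-(t-s)|\xi|^p+i(t-s)\xi^3}\int \widehat{u_N^{(1)}}(s,\xi_1)\widehat{u_N^{(1)}}(s,\xi-\xi_1)\,d\xi_1\,ds.
\]
For $|\xi|\le 1/2$, the dominant contribution comes from the high-low resonance $\xi_1\in[N,N+1]$, $\xi-\xi_1\in[-N-1,-N]$. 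On this set, the dissipative decay $e^{-s(|\xi_1|^p+|\xi-\xi_1|^p)}\sim e^{-2sN^p}$ dominates the dispersive phase $e^{is(3N^2\xi+O(N))}$ precisely because $p\ge 2$, so that the $s$-integration contributes a factor $\sim 1/(2N^p)$, while the $\xi_1$-integration of $|\widehat{\phi_N}|^2$ contributes $\sim N^p$. These cancel to give $|\widehat{u_N^{(2)}}(t,\xi)|\gtrsim |\xi|$ for $|\xi|\le 1/2$, hence $\|u_N^{(2)}(t)\|_{H^s}\gtrsim 1$ independently of $N$ and $s$.

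The main obstacle is establishing this sharp lower bound with the correct uniformity in $N$: the whole argument hinges on the competition between the dispersive phase $\sim N^2\xi$ and the dissipation $\sim N^p$ in the second iterate, and the condition $p\ge 2$ is precisely the threshold at which the dissipation wins at low output frequencies, preventing the phase from cancelling the resonant contribution. Once this lower bound is secured, combining the three estimates gives $\|u_N(t)\|_{H^s}\gtrsim \epsilon^2$ while $\|\phi_N\|_{H^s}\to 0$, contradicting continuity of the flow-map at the origin.
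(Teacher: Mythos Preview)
Your proposal is correct and follows essentially the same approach as the paper: the same initial data concentrated at frequency $N$, the same Picard expansion furnished by the analyticity in Theorem~\ref{teo1}, and the same lower bound on the second iterate at low output frequencies, driven by the fact that for $p\ge 2$ the dissipative part $|\varphi_1|\sim N^p$ dominates the resonant phase $|\varphi_2|\sim N^2|\xi|$. The paper secures the lower bound on $A_2$ by explicitly isolating the real part of the integrand and showing it keeps a definite sign on $K_\xi$, whereas you phrase it as a dissipation-vs-phase competition preventing cancellation in the $\xi_1$-integral; these are equivalent justifications of the same estimate.
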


\begin{remark}\label{c2ill} i) Following  the same steps in the proof of Theorem. \ref{ill} we also obtain that the flow-map is not $C^2$ at the origin for $s< 3/2-p/4$, when $0\leq p\leq 2$. This allow us to conclude that we can not use contraction method to prove local well-posedness in this regularity.

ii) We observe that if $u$ is a solution of the \eqref{g-KdV-B} then $\|u\|_{L^2_x} \leq \|u_0\|_{L^2_x}$. Also, the local well-posedness result in \cite{CM} and \cite{CE} gives a global well-posedness result in $L^2(\mathbb{R})$ with $u \in C((0, \infty); H^{\infty}(\mathbb{R}))$ (see \cite{CM} and \cite{CE}) and on  the other hand the inequality \eqref{elem02} and the Section \ref{We-Pos} imply that, for any $0 < t < T$ there exists $t' \in ]0,t[$, such that $u(t') \in L^2(\mathbb{R})$. Thus the solution in Theorem \ref{teo1} also belongs to $C((0, \infty); H^{\infty}(\mathbb{R}))$.

	\end{remark}

These results extends the previous results in \cite{CM} (with $\Phi_1\equiv 0$ and $\eta=1$)  and in \cite{CE} for $s=-p/2$ and $p\ge 2$.

The plan of this paper is as follows. In Section 2 we fix some notations, define the spaces when we perform the iteration process,  prove some useful inequalities and recall some important results. In Section 3 we establish linear estimates related to the Duhamel operator, associated to the \eqref{g-KdV-B} equation. In Section 4, we prove the crucial result in this work: the bilinear estimates. In Section 5 we prove the Theorem \ref{teo1} and finally, in the Section 6 we prove the ill-posedness results.

	
	\section{Notations and preliminaries results}

For $A,B>0$, we write $A\lesssim B$ when there exists $c>0$ such that $A\leq cB$. When the constant $c$ is small we write $A\ll B$. We write $A\sim B$ to denote that $A\lesssim B\lesssim A$. Also, we may  write $A\lesssim_{\alpha}B$, to express that the constant $c$ depends on $\alpha$. Given $u=u(t,x)\in \mathcal{S}'(\R^2)$,  we denote by  $\mathcal{F}u$ (or $\tilde{u}$), $\mathcal{F}_xu$ (or $\hat{u}$) and $\mathcal{F}_tu$ its Fourier transform in space-time, space and time respectively. Analogously, for the inverse Fourier transform we write $\mathcal{F}^{-1}u$, $\mathcal{F}^{-1}_\xi u$ and $\mathcal{F}^{-1}_{\tau}u$.

We work with the usual Lebesgue spaces as $L^p_x(\R)$, $L^p_t(\R)$ and $L^p_xL^q_t$. By simplicity we write $L^p_xL^p_t$  as $L^p$. The non-homogeneous Sobolev spaces are endowed with the norm $\|f\|_{H^s}=\|\cjap{\cdot}^s \hat{f}\|_{L^2}$, where $\cjap{x}=(1+|x|^2)^{1/2} \sim 1+|x|$ is the japanese bracket.

In order to define our functional spaces, we recall the Littlewood-Paley multipliers. Let us fix $\eta\in C^{\infty}_0(\R)$, such that $\eta\geq 0$, $\supp\eta\subset [-2,\,2]$ and $\eta\equiv 1$ on $[-1,\,1]$. A dyadic number is any number $N$ of the form $2^{j}$, where $j\in \mathbb{Z}$. With this notation, any sum over the dummy variable $M$, $N$ or $L$ is understood to be over dyadic numbers unless otherwise specified. Define $\varphi(\xi)=\eta(\xi)-\eta(2\xi)$ and $\psi(\tau,\,\xi)=\varphi(\tau-\xi^3)$. Using the notation $f_N(y)=f(y/N)$, we define, for $u\in \mathcal{S}'(\R^2)$ the Fourier multipliers
\begin{equation*}
\FT_x\{ \PN u(t,\cdot) \}(\xi)=\varphi_N(\xi)\hat{u}(t,\xi) \ \ \ \ \textrm{and}\ \ \ \ \FT\{\QL u\}(\tau,\,\xi)=\psi_L(\tau,\,\xi)\tilde{u}(\tau,\,\xi).
\end{equation*}

Because, rougly speaking, $\PN$ localizes in the annulus $\{|\xi|\sim N\}$ and $\QL$ localizes in the region $\{|\tau-\xi^3|\sim L\}$, they are so-called the Littlewod-Paley projections. We can define more projections like
\begin{equation*}
P_{\lesssim N}u = \sum_{\smath{M\lesssim N}}P_M u \ \ \textrm{or}\  \ \  Q_{\ll L}u= \sum_{\smath{M\ll L}}Q_M u,
\end{equation*}
and etc.

Associated to the equation \eqref{g-KdV-B}, we have the following integral equation
\begin{equation}\label{pre01}
u(t)=S_p(t)u_0-\frac{1}{2}\int_0^t S_{p}(t-t')\partial_xu^2(t')dt',\ \ \ t\geq 0,
\end{equation}
where the linear semi-group $S_p(t)=e^{-t(\partial_x^3+L_p)}=e^{-t\partial_x^3}e^{-tL_p}$, associated to  \eqref{g-KdV-B}, is given by 
\begin{equation}\label{pre02}
\FT_x\{S_{p}(t)f\}(\xi)=e^{it\xi^3-t|\xi|^p}\hat{f}(\xi), \ \ \ t\geq 0.
\end{equation}
We observe that $e^{-t\partial_x^3}$ is the unitary group associated to KdV equation and also, $e^{-tL_p}$, given by $e^{-tL_p}f=\mathcal{F}_{\xi}^{-1}\{e^{-t|\,\cdot\,|^p}\hat{f}(\,\cdot\,)\}$ is the semi-group associated to $\partial_tu+L_pu=0$.  We define the two-parameter linear operator $\emph{W}_{p}(t,\,t')=e^{-t\partial_x^3-|t'|L_p}$, given by
\begin{equation}\label{pre03}
\FT_x \{W_{\!p}(t,\,t')f\}(\xi)=e^{it\xi^3-|t'||\xi|^p}\hat{f}(\xi),\ \ \ t,t'\in \R.
\end{equation}
If $t=t'$, $t\in\R\mapsto W_{\!p}(t,t)$  is clearly an extension to $\R$ of $S_{p}(t)$.
Instead of use the integral equation \eqref{pre01}, we will apply a fixed-point argument to the following extension
\begin{equation}\label{pre04}
u(t)=\eta(t)W_{\!p}(t,t)u_0-\smath{\frac{1}{2}}\eta(t)\chi_{\!\,_{\R_{+}}}\!\!(t)\!\int_0^t\! W_{\!p}(t-t',t-t')\partial_x u^2(t')dt'-\smath{\frac{1}{2}}\eta(t)\chi_{\!\,_{\R_{-}}}\!\!(t)\!\int_0^t \!W_{\!p}(t-t',t+t')\partial_xu^2 (t')dt'\!,   
\end{equation}
$t\in\R$. Of course, if $u$ solves \eqref{pre04}, then $u|_{[0,\,T]}$ solves \eqref{pre01} in $[0,\,T]$, $T<1$.

The iteration process will be applied in the  Besov version of classical Bourgain Spaces, which we will be defined now, following \cite{MV}. For $s,b\in \R$, the space $X_{p}\pow{s,\,b,\,q}$ ($q=1$) is the weak closure of the test functions that are uniformly bounded by the norm
\begin{equation}\label{xspace}
\norma{u}{X^{s,b,q}_p}=\left(   \sum_{N}\left[  \sum_{L} \cjap{N}^{sq}\cjap{L+N^p}^{bq}\norma{\PN \QL u}{L^2}^q  \right]^{2/q}  \right)^{1/2}.
\end{equation}
In order to control the \textit{high-high} interaction in the nonlinearity, we introduce\footnote{The authors in \cite{MV}  were inspired by \cite{tao}. } for $b=\pm 1/2$, the space $Y\pow{s,\,b}$ endowed with the norm
\begin{equation}\label{yspace1}
\norma{u}{Y^{s,b}_p}=\left(   \sum_{N}\left[   \cjap{N}^s\norma{\IFT\{(i(\tau-\xi^3)+|\xi|^p+1)^{b+1/2}\phiN\tilde{u}\}} {L^1_tL^2_x}\right]^{2}  \right)^{1/2},
\end{equation}
such that
\begin{equation}\label{yspace2}
\norma{u}{Y^{-p/2,1/2}_p}=\left(   \sum_{N}\left[   \cjap{N}^{-p/2}\norma{(\partial_t+\partial_x^3+L_p +I)\PN u}{L^1_tL^2_x} \right]^{2}  \right)^{1/2},
\end{equation}
Thus, we form the resolution space $\mathcal{S}\pow{s}=X\pow{s,\,1/2,\,1}_p+Y\pow{s,\,1/2}_p$ and the nonlinear space $\mathcal{N}\pow{s}=X\pow{s,\,-1/2,\,1}_p+Y\pow{s,\,-1/2}_p$, endowed with the usual norm:
\begin{equation*}
\|u\|_{X+Y}=\inf\bigl\{ \|u_1\|_{X}+\|u_2\|_Y: u=u_1+u_2, \ \textrm{with}\ \ u_1\in X,\, u_2\in Y   \bigr\}.
\end{equation*}
From now on we work with the resolution space $\mathcal{S}\pow{-p/2}$ and the nonlinear space $\mathcal{N}\pow{-p/2}$. Remembering that $e^{-t\partial_x^3}f=\IFT_{\xi}\{ e^{it(\cdot)^3}\hat{f}(\cdot)\}$ is the group associated to the KdV equation, we have the following result:
\begin{lemma}\label{lema1} For any $\phi\in L^2_x(\R)$, we have
	\begin{equation*}
	\left(  \sum_{L}\bigl[ L^{1/2}\|\QL(e^{-t\partial_x^3}\phi)\|_{L^2} \bigr]^2 \right)^{1/2}\lesssim\, \|\phi\|_{L^2_x}.
	\end{equation*}
\end{lemma}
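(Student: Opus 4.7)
The plan is to reduce everything to a direct computation in space-time Fourier variables. Strictly speaking, to make the $L^2_{t,x}$ norm finite, I regard $e^{-t\partial_x^3}\phi$ as truncated by a fixed cutoff $\eta(t)\in C_c^\infty(\R)$ (with $\eta\equiv 1$ on $[-1,1]$, say); this extra factor only costs a harmless multiplicative constant since $\mathcal{F}_t\eta$ is Schwartz. Set $u(t,x)=\eta(t)e^{-t\partial_x^3}\phi$.

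First, since $\mathcal{F}_x\{e^{-t\partial_x^3}\phi\}(\xi)=e^{it\xi^3}\hat\phi(\xi)$, taking the Fourier transform in $t$ gives
$$\tilde u(\tau,\xi)=\mathcal{F}_t\eta(\tau-\xi^3)\,\hat\phi(\xi).$$
Applying $\QL$ multiplies $\tilde u$ by $\varphi((\tau-\xi^3)/L)$, so by Plancherel together with the change of variable $\sigma=\tau-\xi^3$ at fixed $\xi$,
$$\|\QL u\|_{L^2}^2=\|\phi\|_{L^2_x}^2\int_{\R}|\varphi(\sigma/L)|^2|\mathcal{F}_t\eta(\sigma)|^2\,d\sigma.$$

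Next, I would exploit the almost-orthogonality of the dyadic family $\{\varphi(\cdot/L)\}_L$: because $\varphi$ is supported in an annulus, for each fixed $\sigma\neq 0$ only $O(1)$ dyadic scales $L$ satisfy $\varphi(\sigma/L)\neq 0$, and for those $L\sim|\sigma|$. Hence
$$\sum_{L} L\,|\varphi(\sigma/L)|^2\lesssim|\sigma|,$$
and combining with the previous identity,
$$\sum_{L} L\,\|\QL u\|_{L^2}^2\lesssim\|\phi\|_{L^2_x}^2\int_\R|\sigma|\,|\mathcal{F}_t\eta(\sigma)|^2\,d\sigma\lesssim\|\phi\|_{L^2_x}^2,$$
the last integral being finite because $\mathcal{F}_t\eta$ is Schwartz. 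Taking square roots gives the claimed bound.

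There is no real analytical obstacle here: the argument is Plancherel plus an almost-orthogonality observation on the dyadic family $\{\varphi(\cdot/L)\}_L$. The only point that requires care is interpreting the statement with the appropriate implicit time cutoff so that $\QL u\in L^2_{t,x}$; after that, the rapid decay of $\mathcal{F}_t\eta$ handles the summation over $L$ at once for both the low-frequency ($L\ll 1$) and high-frequency ($L\gg 1$) regimes.
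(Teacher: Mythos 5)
Your proof is correct, and since the paper gives no argument of its own for this lemma (it simply defers to \cite{MV}), your Plancherel-plus-almost-orthogonality computation is exactly the standard argument that the citation points to: writing $\mathcal{F}_{t,x}\{\eta(t)e^{-t\partial_x^3}\phi\}(\tau,\xi)=\mathcal{F}_t\eta(\tau-\xi^3)\hat\phi(\xi)$, changing variables $\sigma=\tau-\xi^3$ at fixed $\xi$, and using that only $O(1)$ dyadic scales $L\sim|\sigma|$ survive, so that the sum is controlled by $\int|\sigma||\mathcal{F}_t\eta(\sigma)|^2d\sigma<\infty$ (equivalently, by a Besov-type norm $B^{1/2}_{2,\infty}$ or $B^{1/2}_{2,1}$ of $\eta$, which is how \cite{MV} phrase related bounds). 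You were also right to insist on the time cutoff: as literally written the lemma is vacuous, since $\varphi$ vanishes on a neighborhood of the origin and $\mathcal{F}_{t,x}\{e^{-t\partial_x^3}\phi\}$ is supported on $\{\tau=\xi^3\}$, so each $\QL(e^{-t\partial_x^3}\phi)$ is zero as a tempered distribution; the meaningful (and intended) statement carries the factor $\eta(t)$, which is also how the estimate is used in Proposition \ref{prop-a} and in the extension-lemma machinery. Your observation that the $\ell^2$ summation in $L$ only requires rapid decay of $\mathcal{F}_t\eta$, with no case distinction between $L\ll1$ and $L\gg1$, is a small simplification over arguments that invoke the $\ell^1$-Besov norm of the cutoff.
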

\begin{proof} See \cite{MV}.
	
\end{proof}

\begin{lemma} \label{lema2}
	\begin{enumerate}
		\item 	For each dyadic $N$, we have
		\begin{equation}\label{elem01}
		\|(\partial_t+\partial_x^3)\PN u\|_{\!_{L^1_tL^2_x}}\lesssim \,\| \PN u\|_{\!\,Y_p\pow {0,1/2}}.
		\end{equation}
		\item For all $u\in \mathcal{S}\pow{-p/2}$, with $p> 0$, 
		\begin{equation}\label{elem02}
		\|u\|_{\!_{L^2}}\lesssim\, \|u\|_{\!\,\mathcal{S}\pow {-p/2}}.
		\end{equation}
		\item For all $u\in \mathcal{S}^0$, 
		\begin{equation}\label{elem03}
		\left(  \sum_{L}\bigl[L^{1/2}\|\QL u\|_{\!_{L^2}} \bigr]^2 \right)^{1/2}\lesssim \ \  \|u\|_{\!\,\mathcal{S}\pow {0}}.
		\end{equation}
	\end{enumerate}

\end{lemma}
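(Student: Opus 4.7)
The plan is to prove the three items in turn via Fourier-analytic arguments tailored to the $Y_p$ structure. Throughout I abbreviate $\mathcal{L}:=\partial_t+\partial_x^3+L_p+I$, whose space-time Fourier symbol is $i(\tau-\xi^3)+|\xi|^p+1$.

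For (1), the key is the identity $(\partial_t+\partial_x^3)P_Nu=\mathcal{L}P_Nu-(L_p+I)P_Nu$. The $L^1_tL^2_x$-norm of the first term equals $\|P_Nu\|_{Y_p^{0,1/2}}$ by \eqref{yspace2}. For the second, the operator $(L_p+I)\mathcal{L}^{-1}$ has Fourier multiplier $(|\xi|^p+1)/[i(\tau-\xi^3)+|\xi|^p+1]$, whose partial inverse Fourier transform in $\tau$ at fixed $\xi$ equals $(|\xi|^p+1)\exp\{-(|\xi|^p+1)t+it\xi^3\}\chi_{\{t>0\}}$, with $L^1_t$-norm exactly $1$ uniformly in $\xi$. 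Combining Young in $t$ with Plancherel in $x$ gives $L^1_tL^2_x\to L^1_tL^2_x$ boundedness, and (1) follows.

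For (2), I decompose $u=u_1+u_2$ with $u_1\in X_p^{-p/2,1/2,1}$ and $u_2\in Y_p^{-p/2,1/2}$. For $u_1$, Plancherel gives $\|u_1\|_{L^2}^2\sim\sum_{N,L}\|P_NQ_Lu_1\|_{L^2}^2$, and since $\cjap{L+N^p}\gtrsim\cjap{N}^p$ we bound $\|P_NQ_Lu_1\|^2\leq[\cjap{N}^{-p/2}\cjap{L+N^p}^{1/2}\|P_NQ_Lu_1\|]^2$; the inequality $\sum_L a_L^2\leq(\sum_L a_L)^2$ for non-negative $a_L$ then recovers the $X$-norm after summing in $N$. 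For $u_2$, I use the Duhamel representation $P_Nu_2(t)=\int_{-\infty}^t e^{-(t-s)(\partial_x^3+L_p+I)}\mathcal{L}P_Nu_2(s)\,ds$; Minkowski in $x$ combined with $\|e^{-\tau(L_p+I)}P_Nh\|_{L^2_x}\leq e^{-c\tau\cjap{N}^p}\|P_Nh\|_{L^2_x}$ yields $\|P_Nu_2(t,\cdot)\|_{L^2_x}\leq(K_N*F_N)(t)$, where $K_N(t)=e^{-ct\cjap{N}^p}\chi_{\{t>0\}}$ satisfies $\|K_N\|_{L^2_t}\sim\cjap{N}^{-p/2}$ and $F_N(t)=\|P_N\mathcal{L}u_2(t,\cdot)\|_{L^2_x}$. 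Young's inequality $L^1_t*L^2_t\hookrightarrow L^2_t$ then gives $\|P_Nu_2\|_{L^2_{t,x}}\lesssim\cjap{N}^{-p/2}\|\mathcal{L}P_Nu_2\|_{L^1_tL^2_x}$; squaring and summing in $N$ finishes (2).

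For (3), the $X$-part follows at once from $\sum_L L\|P_NQ_Lu\|^2\leq\sum_L\cjap{L+N^p}\|P_NQ_Lu\|^2\leq(\sum_L\cjap{L+N^p}^{1/2}\|P_NQ_Lu\|)^2$, then summing in $N$. The $Y$-part is more delicate: by (1) it suffices to bound $\sum_LL\|Q_LP_Nu\|^2$ in terms of $\|(\partial_t+\partial_x^3)P_Nu\|_{L^1_tL^2_x}^2$. I would pass to $\tilde v:=e^{t\partial_x^3}P_Nu$, which is an $L^2_{t,x}$-isometry intertwining $Q_L$ with the purely temporal Littlewood--Paley projection $Q^t_L$, so that the sum equals $\sum_LL\|Q^t_L\tilde v\|_{L^2}^2\sim\|\tilde v\|_{\dot{H}^{1/2}_tL^2_x}^2$. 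Combining the dissipative Duhamel formula for $\tilde v$ with a dyadic split into $L\lesssim\cjap{N}^p$ (handled by the AM--GM bound $L/(L^2+\cjap{N}^{2p})\lesssim\cjap{N}^{-p}$ together with the $L^2$-estimate from (2)) and $L\gtrsim\cjap{N}^p$ (handled by the multiplier decay $\cjap{L+N^p}^{-1}$ from $\mathcal{L}^{-1}$ combined with the bound $\|Q_Lh\|_{L^2_{t,x}}\lesssim L^{1/2}\|h\|_{L^1_tL^2_x}$ available via Young and Minkowski) yields the desired estimate.

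The main obstacle lies in the high-modulation regime $L\gtrsim\cjap{N}^p$ of the $Y$-part of (3): the kernel associated to the symbol $|\tau|^{1/2}/(i\tau+|\xi|^p+1)$ is not in $L^2_\tau$, so a crude Hausdorff--Young bound diverges; the proof genuinely requires the Besov $\ell^2$-summability in $N$ and the fine interplay between modulation and spatial frequency first developed by Molinet and Vento in the KdV--Burgers setting \cite{MV}.
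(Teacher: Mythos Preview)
Your arguments for (1) and (2) are correct, and for (2) your Duhamel-plus-Young approach is essentially the paper's own: the paper computes the kernel $\mathcal{F}_\tau^{-1}\bigl\{(i(\tau-\xi^3)+|\xi|^p+1)^{-1}\bigr\}$ explicitly via the identities $\widehat{x/(x^2+k^2)}=-\pi i\,\mathrm{sgn}(t)\,e^{-2\pi k|t|}$ and $\widehat{k/(x^2+k^2)}=\pi\,e^{-2\pi k|t|}$, obtaining $c\,e^{-(|\xi|^p+1)t+it\xi^3}\chi_{\{t>0\}}$, and then applies exactly the $L^2_t\ast L^1_t\hookrightarrow L^2_t$ Young bound that your semigroup representation produces; the two arguments are the same computation in different coordinates. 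The paper gives no detail at all for (1) or (3), deferring both to \cite{MV}, so for (1) you have in fact supplied more than the paper does.

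For (3) your outline contains a genuine misstep. The reduction ``by (1) it suffices to bound $\sum_LL\|Q_LP_Nu\|_{L^2}^2$ by $\|(\partial_t+\partial_x^3)P_Nu\|_{L^1_tL^2_x}^2$'' proposes a false target: with $g:=(\partial_t+\partial_x^3)P_Nu$ one has $\sum_LL\|Q_LP_Nu\|_{L^2}^2\sim\int|\tau-\xi^3|^{-1}|\tilde g(\tau,\xi)|^2\,d\tau\,d\xi$, and for $g\in L^1_tL^2_x$ the transform $\tilde g$ is merely bounded and continuous in $\tau$ with no vanishing on $\{\tau=\xi^3\}$, so this integral is generically infinite. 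The dissipative factor $|\xi|^p+1$ in $\mathcal{L}$ is not a convenience to be stripped away via (1); it is precisely what regularizes the symbol at low modulation. Once you retain the full $\mathcal{L}$ --- as you in fact do a line later when invoking the ``dissipative Duhamel formula'' --- your low-modulation argument via (2) is sound, and at high modulation you correctly identify the logarithmic divergence that forces one back to the finer argument in \cite{MV}; the paper does not resolve that gap here either.
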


\begin{proof} We will prove only \eqref{elem02}, the proofs of \eqref{elem01} and \eqref{elem03} practically are given in \cite{MV}. As 
	$$
	\|u\|_{L^2} \sim \left( \sum_N  \|P_N u\|_{L^2}^2 \right)^{1/2} \quad \textrm{and} \quad 
	\|u\|_{\mathcal{S}\pow{-p/2}} \sim \left( \sum_N  \|P_N u\|_{\mathcal{S}\pow{-p/2}}^2 \right)^{1/2},
	$$
	it is sufficient to prove that
	\begin{equation}\label{eq19}
	\|P_N u\|_{L^2} \lesssim   \|P_N u\|_{\mathcal{S}\pow{-p/2}}.
	\end{equation}
	Remembering the definition of our resolution space, it suffices to prove \eqref{eq19} with $ \|P_N u\|_{X\pow{-p/2,1/2,1}}$ and  with $  \|P_N u\|_{Y^{-p/2,1/2}}$  in the right-hand side.
	%
	For the first, noting that $\langle L+N^p \rangle^{1/2} \gtrsim \langle N \rangle^{p/2}$, we have
		\begin{align*}\label{eq20}
		\|P_N u\|_{X\pow{-p/2,1/2,1}}&\sim\sum_{L} \langle N \rangle^{-p/2}\langle L+N^p \rangle^{1/2} \|P_N  Q_L u\|_{L^2} \\
		&\gtrsim \sum_{L} \|P_N  Q_L u\|_{L^2} \geq  \left( \sum_{L} \|P_N  Q_L u\|_{L^2}^2 \right)^{1/2}\\
		& \sim   \|P_N u\|_{L^2}.
		\end{align*}	
	%
	%
	%
	%
	%
	For the second inequality, since $\|P_N u\|_{L^2}= \|\varphi_N \widehat{u}\|_{L^2}=\|\mathcal{F}_{\tau}^{-1}  \left( \varphi_N \tilde{u}  \right) \|_{L^2}$, then
	\begin{equation}\label{eq21}
	\begin{split}
	\| P_N u \|_{L^2} =& \|\mathcal{F}_{\tau}^{-1}  \left( \dfrac{\varphi_N}{i(\tau -\xi^3) +|\xi|^p+1} \right) *{\!_{\!_t}}\, \mathcal{F}_{\tau}^{-1}  \left(  \left\{  i(\tau -\xi^3) +|\xi|^p+1 \right\}  \varphi_N  \tilde{u}  \right)\|_{L^2}\\
	\leq & \|\,   \|\mathcal{F}_{\tau}^{-1}  \left( \dfrac{\varphi_N}{i(\tau -\xi^3) +|\xi|^p+1} \right)\|_{L^2_t} \, \| \mathcal{F}_{\tau}^{-1}  \left(  \left\{  i(\tau -\xi^3) +|\xi|^p+1 \right\}  \varphi_N  \tilde{u}  \right)\|_{L^1_t}\|_{L_{\xi}^2}\\
	\leq &    \|\mathcal{F}_{\tau}^{-1}  \left( \dfrac{\varphi_N}{i(\tau -\xi^3) +|\xi|^p+1} \right)\|_{L_{\xi}^{\infty}L^2_t} \, \| \mathcal{F}_{\tau}^{-1}  \left(  \left\{  i(\tau -\xi^3) +|\xi|^p+1 \right\}  \varphi_N  \tilde{u}  \right)\|_{L_{\xi}^2 L^1_t}\\
	\leq &    \|\mathcal{F}_{\tau}^{-1}  \left( \dfrac{\varphi_N}{i(\tau -\xi^3) +|\xi|^p+1} \right)\|_{L^2_t L_{\xi}^{\infty}} \, \| \mathcal{F}_{\tau}^{-1}  \left(  \left\{  i(\tau -\xi^3) +|\xi|^p+1 \right\}  \varphi_N  \tilde{u}  \right)\|_{ L^1_t L_{\xi}^2}.
	\end{split}
	\end{equation}
	Using the definition of $\| \cdot \|_{Y^{0,1/2}}$ we get
	\begin{equation}\label{eq22}
	\| P_N u \|_{L^2} \lesssim     \|\mathcal{F}_\tau^{-1}  \left( \dfrac{\varphi_N}{i(\tau -\xi^3) +|\xi|^p+1} \right)\|_{L^2_t L_{\xi}^{\infty}} \, \|P_N u \|_{Y^{0,1/2}}.
	\end{equation}
	To estimate the norm of  inverse Fourier transform above, we note that
	\begin{equation}\label{1eq21}
	\begin{split}
	\left|\mathcal{F}_{\tau}^{-1}  \left( \dfrac{\varphi_N}{i(\tau -\xi^3) +|\xi|^p+1} \right)(t)\right|=& |\varphi_N |\, \left| \int_{\R} e^{2\pi i t \tau } \dfrac{1}{i(\tau -\xi^3) +|\xi|^p+1} d\tau \right|\\
	=& |\varphi_N |\, \left| \int_{\R} e^{2\pi i t x } \dfrac{1}{i x +|\xi|^p+1} dx \right|\\
	=& |\varphi_N |\, \left| \int_{\R} e^{2\pi i t x } \dfrac{1}{x -i(|\xi|^p+1)} dx \right|.
	\end{split}
	\end{equation}
	Now, if $k\neq0$ is a constant, we have that
	\begin{equation}\label{eq23a}
	\widehat{\left(\dfrac{x}{x^2+k^2}\right)}(t)=-\pi i\, \textrm{sgn} \, t e^{-2\pi k|t|},\quad  \widehat{\left(\dfrac{k}{x^2+k^2}\right)}(t)=\pi \,  e^{-2\pi k|t|}.
	\end{equation}
	For the first and the second identities we refers,  e.g.  \cite{Duoan} pg. 49 and  \cite{Stein} pg. 127, respectively. With these identities in hands, we obtain
	\begin{equation}\label{eq23}
	\begin{split}
	\mathcal{F}_x^{-1}{\left(\dfrac{1}{x-ik}\right)}(t)=& \mathcal{F}_x^{-1}\left(\dfrac{x}{x^2+k^2}\right)(t)+ i \mathcal{F}_x^{-1}\left(\dfrac{k}{x^2+k^2}\right)(t)\\
	=& \pi i \, (1+ \textrm{sgn} t) \, e^{-2\pi k|t|}\\
	=&\begin{cases}
	2 \pi i  \, e^{-2\pi k|t|},&\textrm{if} \quad t \geq 0,\\
	0,&\textrm{if} \quad t \leq 0.
	\end{cases}
	\end{split}
	\end{equation}
	Combining \eqref{1eq21} and \eqref{eq23} we get
	\begin{equation}\label{eq24}
	\begin{split}
	\left|\mathcal{F}_{\tau}^{-1}  \left( \dfrac{\varphi_N}{i(\tau -\xi^3) +|\xi|^p+1} \right)(t)\right|=&2 \pi  |\varphi_N |  \, e^{-2\pi (1+|\xi|^p) t} \chi_{\R^{+}}(t)\\
	\lesssim&  \, e^{-2\pi N^p t} \chi_{\R^{+}}(t).
	\end{split}
	\end{equation}
	Using \eqref{eq22} and \eqref{eq24}, we conclude that
	\begin{equation}\label{eq25}
	\begin{split}
	\| P_N u \|_{L^2} \lesssim &    N^{-p/2} \, \|P_N u \|_{Y^{0,1/2}}\lesssim  \|P_N u \|_{Y^{-p/2,1/2}}.
	\end{split}
	\end{equation}
\end{proof}

\begin{lemma}\label{lema3} \textbf{(Extension lemma)} Let $\mathcal{Z}$ be a Banach space of functions on $\R\times \R$ with the property that
	$$\|g(t)u(t,x)\|_{\!\mathcal{Z}} \lesssim \, \|g\|_{\!L^{\infty}_t}\|u(t,x)\|_{\!\mathcal{Z}}$$
	holds for any $u\in \mathcal{Z}$ and $g\in L^{\infty}_t(\R)$. Let $T$ be a spatial linear operator for which one has the estimate
	$$\|T(e^{-t\partial_x^3}\PN \phi)\|_{\!\mathcal{Z}}\lesssim\, \|\PN\phi\|_{\!L^2}$$
	for some dyadic $N$ and for all $\phi$. Then one has the embedding
	$$\|T(\PN u)\|_{\!\mathcal{Z}}\lesssim\, \|\PN u\|_{\!\mathcal{S}\pow 0}.$$
\end{lemma}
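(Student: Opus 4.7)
My plan is to split $P_N u = v + w$ with $v \in X_p^{0,1/2,1}$ and $w \in Y_p^{0,1/2}$, both frequency-localized at $|\xi|\sim N$, so that by the definition of the $\mathcal{S}^{0}$-norm as an infimum it is enough to prove
\begin{equation*}
\|T(v)\|_{\mathcal{Z}} \lesssim \|v\|_{X_p^{0,1/2,1}} \qquad \text{and} \qquad \|T(w)\|_{\mathcal{Z}} \lesssim \|w\|_{Y_p^{0,1/2}}.
\end{equation*}
The common thread in both cases is to write the argument of $T$ as a superposition of modulated free KdV waves of the form $e^{it\tau}\bigl(e^{-t\partial_x^3}h\bigr)$, so that the single-wave hypothesis on $T$ combines with the $L^\infty_t$-multiplier property of $\mathcal{Z}$ (applied with $g(t)=e^{it\tau}$ of unit $L^\infty_t$ norm) to give $\|T(e^{it\tau}e^{-t\partial_x^3}h)\|_{\mathcal{Z}}\lesssim\|h\|_{L^2}$ piece-by-piece.

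For the $X$-part I would decompose $v=\sum_L Q_L v$ dyadically in the modulation variable and, on $\supp\widetilde{Q_L v}$, perform the change of variable $\tau=\xi^3+\tau'$ (so $|\tau'|\sim L$) to obtain
\begin{equation*}
Q_L v(t,x) = \int_{|\tau'|\lesssim L} e^{it\tau'}\bigl(e^{-t\partial_x^3}f_{\tau'}\bigr)(x)\,d\tau', \qquad \widehat{f_{\tau'}}(\xi):=\widetilde{Q_L v}(\xi^3+\tau',\xi),
\end{equation*}
where $\int\|f_{\tau'}\|_{L^2_x}^2\,d\tau'=\|Q_L v\|_{L^2}^2$ by Plancherel. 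Taking the $\mathcal{Z}$-norm, applying Minkowski in $\tau'$, invoking the pointwise bound above, and using Cauchy--Schwarz on the set $|\tau'|\lesssim L$, I would obtain
\begin{equation*}
\|T(Q_L v)\|_{\mathcal{Z}} \lesssim \int_{|\tau'|\lesssim L}\|f_{\tau'}\|_{L^2_x}\,d\tau' \lesssim L^{1/2}\|Q_L v\|_{L^2} \leq \langle L+N^p\rangle^{1/2}\|Q_L v\|_{L^2},
\end{equation*}
and summing over $L$ recovers $\|v\|_{X_p^{0,1/2,1}}$.

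For the $Y$-part I would use Duhamel's formula: setting $G:=(\partial_t+\partial_x^3+L_p+1)w\in L^1_t L^2_x$, one has $\|G\|_{L^1_t L^2_x}\lesssim\|w\|_{Y_p^{0,1/2}}$ and
\begin{equation*}
w(t) = \int_{-\infty}^{t} e^{-(t-s)(\partial_x^3+L_p+1)}G(s,\cdot)\,ds.
\end{equation*}
After applying $T$, Minkowski in $s$, the scalar multiplier property (to absorb the indicator $\chi_{\{t\geq s\}}$), and the time-translation invariance of $\mathcal{Z}$, the problem reduces to the uniform estimate
\begin{equation*}
\bigl\|T\bigl(\chi_{[0,\infty)}(t)\,e^{-t(\partial_x^3+L_p+1)}\phi\bigr)\bigr\|_{\mathcal{Z}} \lesssim \|\phi\|_{L^2}
\end{equation*}
for every $\phi$ with $\supp\hat\phi\subset\{|\xi|\sim N\}$. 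This last step is the principal obstacle, because a direct attempt to mimic the $X$-part via the resolvent representation $\chi_{[0,\infty)}(t)e^{-t(L_p+1)}=\mathcal{F}_\tau^{-1}\{(i\tau+|\xi|^p+1)^{-1}\}$ and Minkowski in $\tau$ produces a logarithmically divergent integral. To close it I would decompose the $\tau$-integration dyadically and exploit the frequency-concentration $|\xi|^p+1\sim N^p+1$: the bulk of the $L^2$-mass sits at modulations $|\tau|\lesssim N^p$, while the high-$\tau$ tail is tamed by the decay of the resolvent symbol, recovering the missing square-root through the $\langle L+N^p\rangle^{1/2}$-weight built into the $\mathcal{S}^{0}$-norm. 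This replicates the corresponding step in the proof of Molinet--Vento for the classical KdV--Burgers case $p=2$, and the argument extends to $p\geq 2$ with only notational changes.
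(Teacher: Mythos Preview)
Your $X$-part is fine and is exactly the standard argument. The problem lies in the $Y$-part, and it is a genuine gap, not just a vagueness.

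Once you write $w(t)=\int_{-\infty}^{t}e^{-(t-s)(\partial_x^3+L_p+1)}G(s)\,ds$ and reduce via Minkowski to
\[
\bigl\|T\bigl(\chi_{[0,\infty)}(t)\,e^{-t(\partial_x^3+L_p+1)}\phi\bigr)\bigr\|_{\mathcal Z}\lesssim\|\phi\|_{L^2},
\]
the factor $e^{-t(|\xi|^p+1)}$ genuinely depends on~$\xi$; since $T$ is only assumed to be a spatial linear operator (not a Fourier multiplier) you cannot pull it out, and the only available move is the resolvent expansion you describe. But then, with $|\xi|\sim N$ and $M:=N^p+1$,
\[
\int_{\mathbb R}\|h_{\tau'}\|_{L^2_x}\,d\tau'\ \sim\ \|\phi\|_{L^2}\int_{\mathbb R}\frac{d\tau'}{\sqrt{(\tau')^2+M^2}}=\infty,
\]
and a dyadic decomposition in $\tau'$ does not help: the contribution of each block $|\tau'|\sim L$ with $L\gtrsim M$ is $\sim\|\phi\|_{L^2}$, and there are infinitely many such blocks. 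Your appeal to ``the $\langle L+N^p\rangle^{1/2}$-weight built into the $\mathcal S^0$-norm'' is misplaced here: that weight lives only in the $X_p^{0,1/2,1}$-component, whereas you are trying to control $\|T(w)\|_{\mathcal Z}$ by the $Y_p^{0,1/2}$-norm of $w$, which carries no such weight. In fact one can check that $\chi_{[0,\infty)}(t)e^{-t(\partial_x^3+L_p+1)}\phi$ does not even belong to $\mathcal S^0$, so there is no hidden extra structure to exploit.

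The route taken in \cite{MV,tao}, to which the paper defers, avoids this by running Duhamel for the \emph{pure KdV} flow rather than the dissipative one. Write
\[
w(t)=e^{-t\partial_x^3}w(0)+\int_0^t e^{-(t-s)\partial_x^3}\bigl[(\partial_t+\partial_x^3)w\bigr](s)\,ds.
\]
For each fixed $s$ the integrand is $\mathrm{sgn}_0(t,s)\,e^{-t\partial_x^3}\psi_s$ with $\psi_s=e^{s\partial_x^3}(\partial_t+\partial_x^3)w(s)$ and $|\mathrm{sgn}_0(t,s)|\le 1$; applying $T$, Minkowski in $s$, the $L^\infty_t$-multiplier property of $\mathcal Z$, and the hypothesis on $T$ gives
\[
\|T(w)\|_{\mathcal Z}\ \lesssim\ \|w(0)\|_{L^2_x}+\|(\partial_t+\partial_x^3)w\|_{L^1_tL^2_x}.
\]
Both terms on the right are controlled by $\|w\|_{Y_p^{0,1/2}}$: the second is precisely \eqref{elem01}, and the first follows from the elementary bound $\|w\|_{L^\infty_tL^2_x}\le\|(\partial_t+\partial_x^3+L_p+1)w\|_{L^1_tL^2_x}$, obtained by applying $\|\cdot\|_{L^2_x}$ to the dissipative Duhamel formula you wrote and using that the semigroup $e^{-r(L_p+1)}$ is contractive on $L^2_x$ for $r\ge 0$. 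This closes the $Y$-part without any divergence.
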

\begin{proof} See in \cite{MV} the comments before the Lemma 3.3 or see \cite{tao}. 
\end{proof}
As a consequence of this abstract result, using   the Kato smoothing effect  
	\begin{equation}\label{kato}
	\|\partial_xe^{-t\partial_x^3}\phi\|_{L^{\infty}_xL^2_t}\lesssim \, \|\phi\|_{L^2}, \ \ \ \ \ \forall \phi\in L^2,
	\end{equation}
	and that $e^{-t\partial_x^3}$ is a unitary operator in $L^2$, we obtain the following results
\begin{corollary}\label{cor} For any $u$, we have, for $p> 0$, that
	\begin{align}
	\|u\|_{\!_{L^{\infty}_tH\pow{-p/2}_x}} \lesssim {}& \|u\|_{\!\mathcal{S}\pow {-p/2}}, \label{elem04}\\
	\|\PN u\|_{\!_{L^{\infty}_tL^2_x}}\lesssim {}& N^{-1}\|\PN u\|_{\!\mathcal{S}^0} \label{elem05},
	\end{align}
	provided the right-hand side is finite.	
\end{corollary}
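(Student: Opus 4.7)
The plan is to invoke the Extension Lemma (Lemma \ref{lema3}) twice with $T$ equal to the identity, feeding in two different $L^2$-based estimates on the free Airy evolution $e^{-t\partial_x^3}P_N\phi$: the trivial bound coming from unitarity of the Airy group on $L^2_x$, and the Kato smoothing effect \eqref{kato}. In both applications the multiplier hypothesis of Lemma \ref{lema3}, namely $\|g(t)u\|_{\mathcal{Z}}\lesssim \|g\|_{L^\infty_t}\|u\|_{\mathcal{Z}}$, is immediate from $|g(t)u(t,x)|\le \|g\|_{L^\infty_t}|u(t,x)|$ pointwise.

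For \eqref{elem04} I would take $\mathcal{Z}=L^\infty_tL^2_x$. Unitarity of $e^{-t\partial_x^3}$ on $L^2_x$ gives $\|e^{-t\partial_x^3}P_N\phi\|_{L^\infty_tL^2_x}=\|P_N\phi\|_{L^2}$, so Lemma \ref{lema3} upgrades this into the frequency-localized embedding $\|P_Nu\|_{L^\infty_tL^2_x}\lesssim \|P_Nu\|_{\mathcal{S}^0}$. Reading off the definitions \eqref{xspace} and \eqref{yspace1} at the two regularities $0$ and $-p/2$ gives the scaling identity $\|P_Nu\|_{\mathcal{S}^0}\sim\langle N\rangle^{p/2}\|P_Nu\|_{\mathcal{S}^{-p/2}}$, whence
\[
\|P_Nu\|_{L^\infty_tH^{-p/2}_x}\sim \langle N\rangle^{-p/2}\|P_Nu\|_{L^\infty_tL^2_x}\lesssim \|P_Nu\|_{\mathcal{S}^{-p/2}}.
\]
Passing to the full norm is then a matter of squaring and summing in $N$, using Minkowski in the direction
\[
\|u\|_{L^\infty_tH^{-p/2}_x}^2\sim \sup_t\sum_N\langle N\rangle^{-p}\|P_Nu(t)\|_{L^2_x}^2\le \sum_N\langle N\rangle^{-p}\|P_Nu\|_{L^\infty_tL^2_x}^2
\]
combined with the square-function equivalence $\|u\|_{\mathcal{S}^{-p/2}}^2\sim \sum_N \|P_Nu\|_{\mathcal{S}^{-p/2}}^2$ to absorb the right-hand side.

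For \eqref{elem05} the scheme is identical but with $\mathcal{Z}=L^\infty_xL^2_t$, the mixed space naturally paired with \eqref{kato}. Since $P_N$ localizes to $|\xi|\sim N$, $\partial_x$ acts as a Fourier multiplier of size $\sim N$ on the image, so \eqref{kato} gives
\[
\|e^{-t\partial_x^3}P_N\phi\|_{L^\infty_xL^2_t}\lesssim N^{-1}\|\partial_xe^{-t\partial_x^3}P_N\phi\|_{L^\infty_xL^2_t}\lesssim N^{-1}\|P_N\phi\|_{L^2},
\]
and Lemma \ref{lema3} delivers the frequency-localized embedding $\|P_Nu\|_{L^\infty_xL^2_t}\lesssim N^{-1}\|P_Nu\|_{\mathcal{S}^0}$, with the $N^{-1}$ gain supplied by the smoothing effect.

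The main obstacle is essentially bookkeeping rather than deep analysis: one must cleanly verify the scaling relation $\|P_Nu\|_{\mathcal{S}^0}\sim \langle N\rangle^{p/2}\|P_Nu\|_{\mathcal{S}^{-p/2}}$ by separately comparing the $X$- and $Y$-components at the two regularities, and apply Minkowski in the only direction in which $\sup_t$ may be pulled through the $\ell^2$ sum over dyadic frequencies $N$.
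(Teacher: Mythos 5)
Your argument is correct and is precisely the proof the paper intends: Lemma \ref{lema3} applied with the unitarity of $e^{-t\partial_x^3}$ on $L^2$ gives \eqref{elem04} after the frequency-localization identities \eqref{locprop} and summation in $N$, while applied with the Kato smoothing effect \eqref{kato} (acting on $\partial_x^{-1}P_N\phi$) it gives the second estimate. Note only that what your argument (and \cite{MV}) actually yields is $\|P_N u\|_{L^{\infty}_x L^2_t}\lesssim N^{-1}\|P_N u\|_{\mathcal{S}^{0}}$, so the subscripts in \eqref{elem05} should be read as $L^{\infty}_x L^2_t$; the printed $L^{\infty}_t L^2_x$ with the $N^{-1}$ gain does not follow from unitarity and fails for $\eta(t)e^{-t\partial_x^3}P_N\phi$ when $p<2$.
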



\section{Linear Estimates}\label{es-line}

In this section we prove linear estimates related to operator $\emph{W}_p$ as well to the extension of the Duhamel operator introduced in \eqref{pre04}. We will do some adaptations of the arguments in \cite{MV}, in order to get the necessary estimates.  

\begin{proposition}\label{prop-a}
	For all $\phi\in H^{-p/2}(\R)$ and  $p>0$, we have
	\begin{equation}\label{esti-a}
	|| \eta(t)\emph{W}_{\!p}(t,\,t) \phi ||_{\mathcal{S}^{-p/2}}\lesssim || \phi ||_{H^{-p/2}}.
	\end{equation}
\end{proposition}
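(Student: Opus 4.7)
The plan is to bound the $\mathcal{S}^{-p/2}$-norm by the $Y_p^{-p/2,1/2}$-part alone, since $\mathcal{S}^{-p/2}=X_p^{-p/2,1/2,1}+Y_p^{-p/2,1/2}$ is a sum space (take the trivial decomposition $v=0+v$), so $\|v\|_{\mathcal{S}^{-p/2}}\le \|v\|_{Y_p^{-p/2,1/2}}$. Thus it suffices to prove
\[
\bigl\|\eta(t)W_{\!p}(t,t)\phi\bigr\|_{Y_p^{-p/2,1/2}} \lesssim \|\phi\|_{H^{-p/2}}.
\]
First, I would note that $\PN$, being a spatial Fourier multiplier, commutes with $\eta(t)$ and with $W_{\!p}(t,t)$, so $\PN[\eta(t)W_{\!p}(t,t)\phi]=\eta(t)W_{\!p}(t,t)\PN\phi$. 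In view of \eqref{yspace2} and the $\ell^2_N$ structure of $Y_p^{-p/2,1/2}$, the claim reduces to the frequency-localized estimate
\[
\bigl\|(\partial_t+\partial_x^3+L_p+I)\bigl[\eta(t)W_{\!p}(t,t)\PN\phi\bigr]\bigr\|_{L^1_tL^2_x} \lesssim \|\PN\phi\|_{L^2_x},
\]
uniform in the dyadic $N$; multiplying by $\langle N\rangle^{-p/2}$, squaring, and summing in $N$ then produces $\|\phi\|_{H^{-p/2}}$ on the right.

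For the key algebraic step I would exploit \eqref{pre03} to verify, by direct computation on the Fourier side, the identity $(\partial_t+\partial_x^3)W_{\!p}(t,t)=-\operatorname{sgn}(t)\,L_pW_{\!p}(t,t)$. Combined with the Leibniz rule applied to the cutoff $\eta(t)$, this yields
\[
(\partial_t+\partial_x^3+L_p+I)\bigl[\eta W_{\!p}\PN\phi\bigr] = \eta'(t)W_{\!p}(t,t)\PN\phi \,+\, \eta(t)W_{\!p}(t,t)\PN\phi \,+\, 2\chi_{\R^-}\!(t)\,\eta(t)\,L_pW_{\!p}(t,t)\PN\phi.
\]
Plancherel in $x$, together with $|\xi|^p\sim N^p$ on $\supp\varphi_N$ (for $N\gtrsim 1$), gives the pointwise-in-$t$ bounds
\[
\|W_{\!p}(t,t)\PN\phi\|_{L^2_x}\lesssim e^{-c|t|N^p}\|\PN\phi\|_{L^2_x},\qquad \|L_pW_{\!p}(t,t)\PN\phi\|_{L^2_x}\lesssim N^p\,e^{-c|t|N^p}\|\PN\phi\|_{L^2_x}.
\]
Since $\eta,\eta'$ are compactly supported and $\int_{\R} e^{-c|t|N^p}\,dt\lesssim \min(1,N^{-p})$, integrating in $t$ controls each of the three terms by $C\|\PN\phi\|_{L^2}$ uniformly in $N$ (the low-frequency case $N\lesssim 1$ being trivial by the compact support of $\eta$).

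The only delicate point is the exact cancellation in the third term between the factor $N^p$ coming from $L_p$ and the time-integral gain $N^{-p}$ arising from the dissipation; this cancellation is what makes the $Y$-space with its $L^1_t$ structure the right vehicle to absorb the semigroup $e^{-|t|L_p}$ at the scaling-critical regularity $-p/2$, so that no splitting of $\eta(t)W_{\!p}(t,t)\phi$ into modulation dyadics $\QL$ (and hence no recourse to the $X_p^{-p/2,1/2,1}$-part) is required.
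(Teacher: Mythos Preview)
Your argument is correct and gives a genuinely different, and rather more economical, proof than the paper's.

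The paper bounds $\|\eta(t)W_{\!p}(t,t)\phi\|_{\mathcal{S}^{-p/2}}$ by the $X_p^{-p/2,1/2,1}$-part instead of the $Y_p^{-p/2,1/2}$-part. This forces a modulation-dyadic decomposition: one must control $\sum_L\langle L+N^p\rangle^{1/2}\|P_NQ_L(\eta W_{\!p}\phi)\|_{L^2}$, split the sum into $L\lesssim\langle N\rangle^p$ and $L\gtrsim\langle N\rangle^p$, apply Bernstein in time for the small-$L$ part, and run a paraproduct decomposition of $\eta(t)e^{-|t||\xi|^p}$ (plus decay estimates for $\widehat{\theta_p}$ via integration by parts) for the large-$L$ part, ultimately landing on Besov norms $B^{1/2}_{2,1}$ and $\dot B^{1/2}_{2,1}$ of $\eta$ and $e^{-|\cdot|}$.

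Your route exploits the $L^1_tL^2_x$ structure of the $Y$-space together with the key identity $(\partial_t+\partial_x^3)W_{\!p}(t,t)=-\operatorname{sgn}(t)L_pW_{\!p}(t,t)$, so that after the Leibniz rule only three explicit terms remain and each is dispatched by the elementary bound $N^p\!\int e^{-c|t|N^p}dt\lesssim 1$. This avoids all modulation dyadics and paraproducts. The paper's approach, on the other hand, yields finer information (control in $X_p^{-p/2,1/2,1}$, not just in the sum space), and its core estimate \eqref{aah} is reused later in the proof of Lemma~\ref{lem-a}; your shortcut, while perfectly sufficient for the proposition at hand, does not produce that auxiliary ingredient.
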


\begin{proof}
	Clearly, the left-hand side in \eqref{esti-a} is bounded  by   $\normaXp{\eta(t)\emph{W}(t,t) \phi}$. It suffices to show
		\begin{equation}\label{aag}
		\sum_{L}\langle{L+N^p}\rangle^{1/2} || P_{N}Q_{L}(\eta(t)\emph{W}_{p}(t,\,t) \phi) ||_{L^{2}}\lesssim ||P_{N}\phi ||_{L^{2}},
		\end{equation}
		After this, multiplying both sides by $\cjap{N}\pow{-p/2}$,  squaring and summing in $N$, we get the desired. In order to prove \eqref{aag}, first we note that	%
		\begin{align}
		|| P_{N}Q_{L}(\eta(t)\emph{W}_{p}(t,\,t) \phi) ||_{L_{xt}^{2}}& = || \phiN(\xi)\phiL(\tau - \xi^3)\mathcal{F}_{t}(e^{it\xi^3-|t| |\xi|^p}\eta(t))(\xi)\hat{\phi}(\xi) ||_{L_{\xi,\tau}^{2}}\nonumber\\
		&\lesssim|| \phiN(\xi)\phiL(\tau - \xi^3)\mathcal{F}_{t}(e^{-|t| |\xi|^p}\eta(t))(\tau-\xi^3) \phiN(\xi)\hat{\phi}(\xi)||_{L_{\xi,\tau}^{2}}\nonumber\\
		&\stackrel{(1)}{=}  || \phiL(\tau )\mathcal{F}_{t}(e^{-|t| |\xi|^p}\eta(t))(\tau) \phiN(\xi)\hat{\phi}(\xi)||_{L_{\xi,\tau}^{2}}\nonumber\\
		&\stackrel{(2)}{\leq}  ||\phiN(\xi)\phiL(\tau)\mathcal{F}_{t}(e^{-|t| |\xi|^p}\eta(t))(\tau) ||_{L_{\xi}^{\infty}L_{\tau}^{2}}|| \widehat{\PN\phi} ||_{L_{\xi,\tau}^{2}}\nonumber\\
		&= ||\phiN(\xi)P_{L}(e^{-|t| |\xi|^p}\eta(t)) ||_{L_{\xi}^{\infty}L_{t}^{2}}||\PN\phi ||_{L_{x,t}^{2}}, \label{aab}
		\end{align}
		where we using in (1)  the translation invariance of the $L^p$-norms and in $(2)$ the H\"older inequality.

	Adding in $L$  we obtain
		\begin{equation}\label{aaf}
		\begin{split}
		&\sum_{L}\langle{L+N^p}\rangle^{1/2}||| P_{N}Q_{L}(\eta(t)\emph{W}_{\!p}(t,\,t) \phi) ||_{L_{x,t}^{2}}\\
		&\lesssim||P_{N} \phi ||_{L_{x,t}^{2}}\sum_{L}\langle{L+N^p}\rangle^{1/2} ||\phiN(\xi)P_{L}(e^{-|t| |\xi|^p}\eta(t)) ||_{L_{\xi}^{\infty}L_{t}^{2}}.
		\end{split}
		\end{equation}
	To get the bound in \eqref{aag} we will prove that
	\begin{equation}\label{aah}
	\sum_{L}\langle{L+N^{p}}\rangle^{1/2}||\phiN(\xi) P_{L}(e^{-|t| |\xi|^p}\eta(t)) ||_{L_{\xi}^{\infty}L_{t}^{2}} \lesssim 1.
	\end{equation}
	Spliting the summand into $L\leq \cjap{N}^p$ and $L\geq \cjap{N}^p$, the proof will be done in two cases. For the first case, applying Bernstein inequality in time, we have  
	
	\begin{equation}\label{aai}
	\begin{split}
	&\sum_{L\lesssim\, \langle{N}\rangle^{p}}\langle{L+N^p}\rangle^{1/2}||\phiN(\xi) P_{L}(e^{-|t| |\xi|^p}\eta(t)) ||_{L_{\xi}^{\infty}L_{t}^{2}}\,\lesssim\;\sum_{L\lesssim\, \langle{N}\rangle^{p}}\langle{N}\rangle^{p/2} L^{1/2} \sup_{|\xi|\sim N} ||e^{-|t| |\xi|^p}\eta(t) ||_{L_{t}^{1}}.
	\end{split}
	\end{equation}
	Noting that
	\[
	||e^{-|t| |\xi|^p}\eta(t) ||_{L_{t}^{1}} \lesssim\, \min\{ 1,\,|\xi|^{- p} \},
	\]
	then we have
	\begin{equation}\label{aaj}
	\sum_{L\lesssim\, \langle{N}\rangle^{p}}\langle{L+N^p}\rangle^{1/2}||\varphi_{N} P_{L}(e^{-|t| |\xi|^p}\eta(t)) ||_{L_{\xi}^{\infty}L_{\tau}^{2}} \lesssim \, \langle{N}\rangle^{p}  \min \{ 1,\,N^{- p} \}\lesssim 1.
	\end{equation}
For the second case, using  the following rearrangement $$\sum_{M}\sum_{N} a_{M,N}=\sum_{M}\sum_{N\lesssim M}a_{M,N}+\sum_{M}\sum_{N\gtrsim M}a_{M,N}=\sum_{M}\sum_{N\lesssim M}a_{M,N}+\sum_{M}\sum_{N\lesssim M}a_{N,M},$$
		one can see that
		\begin{equation}\label{aak}
		\begin{split}
		P_{L}(e^{-|t| |\xi|^p}\eta(t)) &=\,P_{L}\left[\sum_{M\gtrsim L}(\,P_{M}\eta(t)P_{\lesssim M}e^{- |t| |\xi|^{p}} + P_{\lesssim M}\eta(t)P_{M} e^{- |t| |\xi|^{p}}\,) \right]\\
		&=\, P_{L}(I) + P_{L}(II).
		\end{split}
		\end{equation}
	For the term $P_L(I)$, using H\"older inequality 
		\begin{align}
		\sum_{L\gtrsim \langle{N}\rangle^{p}}\langle{L+N^p}\rangle^{1/2}||\varphi_{N}P_{L}(I) ||_{L_{\xi}^{\infty}L_{\tau}^{2}}&\lesssim\sum_{L}L^{1/2}\sum_{M\gtrsim L}\|\phiN(\xi)P_M\eta(t)\|_{L^{\infty}_{\xi}L_t^2}\|\phiN(\xi)P_{\lesssim M}e^{-|t||\xi|^p}\|_{L^{\infty}_{x,t}}\nonumber\\ 
		&\lesssim \sum_{M}L^{1/2}\sum_{L\lesssim M}\|\phiN(\xi)P_M\eta(t)\|_{L^{\infty}_{\xi}L_t^2}\|\phiN(\xi)P_{\lesssim M}e^{-|t||\xi|^p}\|_{L^{\infty}_{x,t}} \nonumber\\
		&\lesssim \sum_{M} M^{1/2}\|\phiN(\xi)P_M\eta(t)\|_{L^{\infty}_{\xi}L_t^2}\|\phiN(\xi)P_{\lesssim M}e^{-|t||\xi|^p}\|_{L^{\infty}_{x,t}}.\label{aal}
		\end{align}
		But, because $\phiN(\xi)P_{\lesssim M}e^{-|t||\xi|^p}\lesssim\phiN(\xi)\arctan(M/\xi^p)$, then  the right-hand side of \eqref{aal} is bounded by
		\begin{align*}
		& \sum_{M} M^{1/2}\|P_M\eta\|_{L^2_t}\lesssim_{_{ \|\eta\|_{B\pow{1/2}_{\smath{2,1}} } } }1.
		\end{align*}
		Proceeding in a similar way for $P_{L}(II)$, we obtain
		\begin{equation}\label{aam}
		\sum_{L\gtrsim \langle{N}\rangle^{p}}\langle{L+N^p}\rangle^{1/2}||\varphi_{N}P_{L}(II) ||_{L_{\xi}^{\infty}L_{\tau}^{2}}\lesssim\,\sum_{M}M^{1/2}||\phiN P_{M}e^{- |t| N^p} ||_{L_{t}^{2}}\lesssim_{_{ \|e^{-|t|}\|_{\textrm{\.B}\pow{1/2}_{\smath{2,1}} } } }1,
		\end{equation}
		remembering that the homogeneous Besov space $\textrm{\.B}\pow{1/2}_{2,1}$ has a scaling invariance and  $e^{-|\cdot|} \in \textrm{\.B}\pow{1/2}_{2,1}$.
\end{proof}

\begin{lemma}\label{lem-a}
	Let $p>0$. For $\omega\in {\mathcal S}(\R^2)$, consider $\kappa_{p,\xi}$ defined on $\R$ by
	\[
	\kappa_{p,\xi}(t)\,=\,\eta(t)\varphi_{N}(\xi)\int_{\R}\dfrac{e^{it\tau}e^{(t-|t|)|\xi|^p} - e^{- |t| |\xi|^p}}{i\tau + |\xi|^p}\,\tilde{\omega}(\tau, \xi)\,d\tau.
	\]
	Then, for all $\xi\in\R$, it holds
	\begin{equation}\label{lla}
	\sum_{L}\langle{L+N^{p}}\rangle^{1/2}||P_{L}\kappa_{p,\xi} ||_{L_{t}^{2}}\lesssim \sum_{L}\langle{L+N^p}\rangle^{- 1/2}|| \varphi_{L}(\tau)\varphi_{N}(\xi)\,\tilde{\omega}||_{L_{\tau}^{2}}.
	\end{equation}
\end{lemma}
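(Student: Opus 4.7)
The plan is to follow the template of the proof of Proposition \ref{prop-a}, introducing one additional Littlewood--Paley decomposition in $\tau$. For fixed $\xi$, set $\omega_{L_1}(\tau)=\varphi_{L_1}(\tau)\varphi_N(\xi)\tilde\omega(\tau,\xi)$ so that $\kappa_{p,\xi}=\sum_{L_1}\kappa_{p,\xi,L_1}$, where $\kappa_{p,\xi,L_1}$ is obtained from the defining formula of $\kappa_{p,\xi}$ by replacing $\varphi_N(\xi)\tilde\omega$ with $\omega_{L_1}$. By Minkowski's inequality in $L^2_t$ it suffices to prove
\begin{equation*}
\sum_L\langle L+N^p\rangle^{1/2}\|P_L\kappa_{p,\xi,L_1}\|_{L^2_t}\lesssim \langle L_1+N^p\rangle^{-1/2}\|\omega_{L_1}\|_{L^2_\tau}
\end{equation*}
for each $L_1$ and then sum over $L_1$. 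I next decompose $\kappa_{p,\xi,L_1}=A-B$ with
\begin{equation*}
A(t)=\eta(t)\int_\R\frac{e^{it\tau}e^{(t-|t|)|\xi|^p}}{i\tau+|\xi|^p}\omega_{L_1}(\tau)\,d\tau,\qquad B(t)=\eta(t)e^{-|t||\xi|^p}\int_\R\frac{\omega_{L_1}(\tau)}{i\tau+|\xi|^p}\,d\tau.
\end{equation*}

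The $B$-piece follows directly from what is already done in Proposition \ref{prop-a}. Indeed, Cauchy--Schwarz gives
\begin{equation*}
\Bigl|\int_\R\frac{\omega_{L_1}(\tau)}{i\tau+|\xi|^p}\,d\tau\Bigr|\le \|\omega_{L_1}\|_{L^2_\tau}\Bigl\|\frac{\varphi_{L_1}(\tau)}{i\tau+|\xi|^p}\Bigr\|_{L^2_\tau}\lesssim \frac{L_1^{1/2}}{\langle L_1+N^p\rangle}\|\omega_{L_1}\|_{L^2_\tau}
\end{equation*}
for $|\xi|\sim N$, while the remaining function $\eta(t)e^{-|t||\xi|^p}$ is exactly the object controlled by \eqref{aah}, so that $\sum_L\langle L+N^p\rangle^{1/2}\|P_L(\eta(t)e^{-|t||\xi|^p})\|_{L^2_t}\lesssim 1$. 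Since $L_1^{1/2}/\langle L_1+N^p\rangle\le \langle L_1+N^p\rangle^{-1/2}$, the $B$-contribution satisfies the required bound.

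The essential work is in the $A$-piece. Here I would exploit the elementary identity
\begin{equation*}
\frac{e^{it\tau}-e^{-t|\xi|^p}}{i\tau+|\xi|^p}=\int_0^t e^{-(t-s)|\xi|^p}e^{is\tau}\,ds\qquad(t\geq 0),
\end{equation*}
and its $t\leq 0$ analogue (obtained by multiplying through by $e^{2t|\xi|^p}$), which rewrites $A$ as $\eta(t)$ times a time-truncated convolution of the heat-like kernel $e^{-|\cdot||\xi|^p}$ with $\mathcal{F}_\tau^{-1}\omega_{L_1}$. A direct Plancherel bound yields $\|A\|_{L^2_t}\lesssim \langle L_1+N^p\rangle^{-1}\|\omega_{L_1}\|_{L^2_\tau}$, which handles the low-frequency range $L\lesssim \langle L_1+N^p\rangle$ at once. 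For $L\gg \langle L_1+N^p\rangle$ I would repeat the rearrangement argument \eqref{aak}--\eqref{aam}, decomposing both $\eta(t)\chi_{\R^{+}}(t)+\eta(t)\chi_{\R^{-}}(t)e^{2t|\xi|^p}$ and the heat-like kernel dyadically in $t$, applying Bernstein, and invoking the homogeneous Besov $\dot{B}^{1/2}_{2,1}$-control of $\eta$ and of $e^{-|\cdot|}$ that is already exploited in the final part of that proof. The main obstacle is precisely the distribution of the single resolvent gain $\langle L_1+N^p\rangle^{-1}$ into the two needed half-gains $\langle L_1+N^p\rangle^{-1/2}$ (for the right-hand side) and $\langle L+N^p\rangle^{-1/2}$ (for summability in $L$); this is made possible by the rapid decay of $\|P_L A\|_{L^2_t}$ for $L\gg\langle L_1+N^p\rangle$, which comes from the $\tau$-localization of the integrand at scale $L_1$ combined with the Schwartz decay of $\hat\eta$.
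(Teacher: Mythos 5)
Your reduction to a fixed $\tau$-block $L_1$ and the final summation are fine, but the core of the argument has a genuine gap, concentrated in the low-frequency regime $|\xi|\lesssim 1$, $L_1\lesssim 1$. The Cauchy--Schwarz bound you use for the $B$-piece, namely $\bigl\|\varphi_{L_1}(\tau)/(i\tau+|\xi|^p)\bigr\|_{L^2_\tau}\lesssim L_1^{1/2}\langle L_1+N^p\rangle^{-1}$, is false there: on $|\tau|\sim L_1$ one only has $|i\tau+|\xi|^p|\sim\max(L_1,|\xi|^p)$, which can be arbitrarily smaller than $\langle L_1+N^p\rangle\geq 1$, so the correct bound is $L_1^{1/2}/\max(L_1,N^p)$. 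Worse, this is not just a fixable constant: in the regime $N^p\ll L_1\ll 1$ the $B$-contribution alone can be of size $L_1^{-1/2}\|\omega_{L_1}\|_{L^2_\tau}$ (choose $\omega_{L_1}$ aligned with $\overline{(i\tau+|\xi|^p)}^{-1}$), while the per-block target is $\langle L_1+N^p\rangle^{-1/2}\|\omega_{L_1}\|_{L^2_\tau}\sim\|\omega_{L_1}\|_{L^2_\tau}$. Hence \emph{no} correct estimate of $B$ (or of $A$) separately can close the argument: the additive split $\kappa_{p,\xi,L_1}=A-B$ throws away the cancellation of the numerator $e^{it\tau}e^{(t-|t|)|\xi|^p}-e^{-|t||\xi|^p}$ at $(\tau,\xi)=(0,0)$, which is exactly what the paper preserves on $|\tau|\le 1$ by working with the differences $e^{it\tau}-1$ and $e^{(t-|t|)|\xi|^p}-e^{-|t||\xi|^p}$ (terms (I) and (II) of \eqref{llb}, estimated via Taylor expansion as in \eqref{2lle}--\eqref{2llj} and \eqref{1lla}), and only splits additively on $|\tau|\ge 1$ (terms (III), (IV)), where the resolvent is genuinely harmless.

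There is a second, related problem in the $A$-piece. The Duhamel identity you invoke represents $A-B$ (for $t\ge 0$), not $A$, so the step is internally inconsistent with having already removed $B$; and the asserted ``direct Plancherel bound'' $\|A\|_{L^2_t}\lesssim\langle L_1+N^p\rangle^{-1}\|\omega_{L_1}\|_{L^2_\tau}$ is not correct for the truncated convolution: writing $\int_0^t=\int_{-\infty}^t-\int_{-\infty}^0$ produces, besides the genuine convolution (which does gain a full resolvent by Plancherel), a rank-one boundary term $e^{-t|\xi|^p}\int_{-\infty}^0 e^{s|\xi|^p}w(s)\,ds$ whose size is only of order $\langle L_1+N^p\rangle^{-1/2}\|\omega_{L_1}\|_{L^2_\tau}$ when $L_1\gg\max(1,N^p)$ --- that term has to be recognized as an exponential profile and fed back through \eqref{aah}, which your sketch does not do. Finally, the passage from an $L^2_t$ bound to the weighted Besov sum $\sum_L\langle L+N^p\rangle^{1/2}\|P_L(\cdot)\|_{L^2_t}$ is only asserted (``rapid decay for $L\gg\langle L_1+N^p\rangle$''); both the high-$L$ tail (which in the paper comes from the integration-by-parts decay \eqref{decay} and the paraproduct analysis of term (III)) and the summability over small $L$ (handled in the paper via $L^{1/2}\|\cdot\|_{L^1_t}$-type bounds as in \eqref{2llg}) require actual estimates, and they are where most of the work of the lemma lies. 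As it stands, the proposal does not constitute a proof.
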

\begin{proof}
	As in \cite{MV}, adding and subtracting  $\eta(t)e^{(t-|t|)|\xi|^p}$ inside the integral (numerator), we can rewrite $k_{p,\xi}$ as
	\begin{equation}\label{llb}
	\begin{split}
	\kappa_{p,\xi}(t)& = \eta(t)e^{(t-|t|)|\xi|^p}\int_{|\tau|\leq 1}\dfrac{e^{it\tau}-1}{i\tau+|\xi|^p}\,\widetilde{\omega_{N}}d\tau + \eta(t)\int_{|\tau|\leq 1}\dfrac{e^{(t-|t|)|\xi|^p}-e^{-|t||\xi|^p}}{i\tau+|\xi|^p}\,\widetilde{\omega_{N}}d\tau\\[5pt]
	&\quad +  \eta(t)e^{(t-|t|)|\xi|^p}\int_{|\tau|\geq 1}\dfrac{e^{it\tau}}{i\tau+|\xi|^p}\,\widetilde{\omega_{N}}d\tau - \eta(t)\int_{|\tau|\geq 1}\dfrac{e^{-|t||\xi|^p}}{i\tau+|\xi|^p}\,\widetilde{\omega_{N}}d\tau,\\[5pt]
	& = (I) + (II) + (III) - (IV),  
	\end{split}
	\end{equation}
	where $\widetilde{\omega_{N}}$ is defined by $\widetilde{\omega_{N}}(\tau,\,\xi) = \varphi_{N}(\xi) \tilde{\omega}(\tau,\,\xi)$.
	By triangular inequality, it's suffices to prove the estimate \eqref{lla} with $P_L(I)$, $P_L(II)$, $P_L(III)$ and $P_L(IV)$ in place of $P_L\kappa_{p,\xi}$. 
	\ \\
	
	\noindent
	{\bf Term\, (IV).} With \eqref{aah} in mind and  performing a straighfoward calculations we get
		\begin{align}
		\|P_L(IV)\|_{L^2_t}&\lesssim \|\phiN(\xi) P_L(\eta(t)e^{-|t||\xi|^p})\|_{L^2_t}\int_{|\tau|\geq 1}\cjap{i\tau+|\xi|^p}^{-1}\widetilde{\omega_{N}}d\tau.\nonumber\\
		&\lesssim \int_{|\tau|\geq 1}\cjap{i\tau+|\xi|^p}^{-1}\widetilde{\omega_{N}}d\tau .\label{4llc}
		\end{align}
		Also, because $\cjap{i\tau+|\xi|^p}\gtrsim \cjap{1+|\xi|^p}$ then we have
		\begin{align}
		\int_{|\tau|\geq 1}\cjap{i\tau+|\xi|^p}^{-1}\widetilde{\omega_{N}}d\tau&\lesssim \sum_{L\geq 1}\|\cjap{1+N^p}^{-1}\phiL\widetilde{\omega_{N}}\|_{L^1_{\tau}}\nonumber\\
		&\lesssim \sum_{L}\cjap{L+N^p}^{-1}L^{1/2}\|\phiL \widetilde{\omega_{N}}\|_{L^2_{t}},\label{4lld}
		\end{align}
		where in the last line we use the Cauchy-Schwarz in $\tau$. This yields the desired bound. 
	
	\ \\

	\noindent
	{\bf Term (II).} Taking account that
		\begin{align}\label{2lle}
		\int_{|\tau|\leq 1}\dfrac{|\widetilde{\omega}_{N}(\tau)|}{|i\tau+|\xi|^p|}\,d\tau&\leq\left(\int_{|\tau|\leq 1}\dfrac{|\phiN(\xi)|\langle{i\tau+|\xi|^p}\rangle}{|i\tau+|\xi|^p|^2}\,d\tau\right)^{1/2}\left(\int_{|\tau|\leq 1}\dfrac{|\widetilde{\omega}_{N}(\tau)|^2}{\langle{i\tau+|\xi|^p}\rangle}\,d\tau\right)^{1/2}\nonumber\\
		&\lesssim \frac{\cjap{N}^{p/2}}{N^{p}}\sum_{L}\cjap{L+N^p}^{-1/2}\|\phiL \widetilde{\omega}_{N}(\tau)\|_{L^2_{\tau}}
		\end{align}
		thus
	\begin{equation}\label{2llf}
		\begin{split}
		\sum_{L}\langle{L+N^p}\rangle^{1/2}||P_{L}(II) ||_{L_{t}^{2}}&\lesssim\,\langle{N}\rangle^{p/2}N^{-p} \sum_{L}\langle{L+N^p}\rangle^{1/2}||P_{L}(\phiN(\xi)\eta(t)(e^{(t-|t|)|\xi|^p}-e^{- |t||\xi|^{p}})) ||_{L_{t}^{2}}.\\[2pt]
		& \qquad \times\sum_{L}\langle{L+N^{p}}\rangle^{- 1/2}||\varphi_{L}\widetilde{\omega}_{N}||_{L_{\tau}^{2}}.
		\end{split}
		\end{equation}
		We need to prove that
		\begin{equation}\label{2llh}
		\sum_{L}\langle{L+N^p}\rangle^{1/2}||P_{L}(II) ||_{L_{t} ^2   }\,\lesssim\,\sum_{L}\langle{L+N^{p}}\rangle^{- 1/2}||\varphi_{L}\widetilde{\omega}_{N}||_{L_{\tau}^{2}},\ \ \ \ \ \ \forall\,N \, \textrm{dyadic}.\ 
		\end{equation}
		In view of \eqref{2llf} it suffices to prove that
		\begin{equation}\label{2llf1}
		\sum_{L}\langle{L+N^p}\rangle^{1/2}||P_{L}(\phiN(\xi)\eta(t)(e^{(t-|t|)|\xi|^p}-e^{- |t||\xi|^{p}})) ||_{L_{t}^{2}}\lesssim \cjap{N}^{p/2}.
		\end{equation}
		For technical reasons, we will divide the proof in two cases, namely,  $N\geq 1$ and  $N<1$. For the first case, by triangular inequality, we have
		\begin{align}
		\sum_{L}\langle{L+N^p}\rangle^{1/2}||P_{L}(\phiN(\xi)\eta(t)(e^{(t-|t|)|\xi|^p}-e^{- |t||\xi|^{p}})) ||_{L_{t}^{2}}\hspace{6cm} \nonumber\\
		\lesssim \underbrace{\sum_{L}\langle{L+N^{p}}\rangle^{1/2}||P_{L}(\phiN(\xi)\eta(t)e^{(t-|t|)|\xi|^{p}}) ||_{L_{t}^{2}}}_{{\bf J}}+\underbrace{\sum_{L}\langle{L+N^{p}}\rangle^{1/2}||P_{L}(\phiN(\xi)\eta(t)e^{-|t||\xi|^{p}}) ||_{L_{t}^{2}}}_{{\bf K}}.\label{2llf2}
		\end{align}
		One can see that ${\bf K}\,\lesssim\,1$, thanks to estimate \eqref{aah}.   We will estimate the first term. Denoting
		$\theta_{p}(t) = \eta(t)e^{(t-|t|)|\xi|^p}$,  the estimates
		\begin{equation}\label{decay}
		|\hat{\theta}_{p}(\tau)|\,\lesssim|\tau|^{-1}\ \ \ \ \textrm{and}\ \ \ \ |\hat{\theta}_{p}(\tau)|\,\lesssim \langle{\xi}\rangle^{p}|\tau|^{-2},
		\end{equation}
		yields from one and two integrations by parts, respectively.
	Now,  splitting the summand in a convenient  way and use the estimates in \eqref{decay} we get
		\begin{equation}\label{2llg}
		\begin{split}
		{\bf J} & =\,\sum_{L\leq 1}\langle{L+N^p}\rangle^{1/2}||\phiL(\tau)\phiN(\xi)\hat{\theta}_{p}(\tau) ||_{L_{\tau}^{2}}+ \sum_{1\leq L\leq\langle{N}\rangle^p}\langle{L+N^p}\rangle^{1/2}||\phiL(\tau)\phiN(\xi)\hat{\theta}_{p}(\tau) ||_{L_{\tau}^{2}}\\[2pt]
		&\qquad + \sum_{L\geq\langle{N}\rangle^p}\langle{L+N^p}\rangle^{1/2}||\phiL(\tau)\phiN(\xi)\hat{\theta}_{p}(\tau) ||_{L_{\tau}^{2}}\\[2pt]
		&\lesssim\,\sum_{L\leq 1}\langle{N}\rangle^{p/2}L^{1/2}||\theta_{p}||_{L_{t}^{1}} +\sum_{1\leq L\leq\langle{N}\rangle^p}\langle{N}\rangle^{p/2}L^{- 1}L^{-1/2}||\theta_{p}||_{L_{t}^{1}}+ \sum_{ L\geq\langle{N}\rangle^p}\,\langle{L}\rangle^{1/2}L^{-2}L^{1/2}||\theta_{p}||_{L_{t}^{1}}\,\langle{N}\rangle^{p}\\[2pt]
		&\lesssim \,\langle{N}\rangle^{p/2}.
		\end{split}
		\end{equation}
		Therefore, remembering that $\textbf{K}\lesssim1\leq \cjap{N}\pow{p/2}$ and  combining this fact, the estimates \eqref{2llg} and \eqref{2llf2} with \eqref{2llf} we conclude the estimate \eqref{2llh} for $N\geq 1$. 
	%
	%
	%
	%
	%
	%
	%
	The case  $ N \leq1$ will be treated in a different way: we will use a Taylor expansion. The identity
		\[
		(e^{(t-|t|)|\xi|^{p }  }-1) - (e^{-|t||\xi|^{p}}-1) = \sum_{n=1}^{\infty}\dfrac{\chi_{\R_{-}}\!(t)(2t)^{n}}{n!}\,|\xi|^{pn} - \sum_{n=1}^{\infty}\dfrac{(-|t|)^{n}}{n!}\,|\xi|^{pn},
		\]
		allows us to conclude that
		\begin{align}
		\sum_{L}\langle{L+N^p}\rangle^{1/2}||P_{L}(\phiN(\xi)\eta(t)(e^{(t-|t|)|\xi|^p}-e^{- |t||\xi|^{p}}) ) ||_{L_{t}^{2}}\hspace{3cm}\nonumber\\
		\lesssim  \sum_{n=1}^{\infty}\frac{|\xi|^{pn}}{n!}\sum_{L}\langle{L}\rangle^{1/2}\left[ \|P_L(|t|^n\eta(t))\|_{L^2_t}+2^n\|P_L(t^n\eta(t)\chi_{\R_{-}}(t))\|_{L^2_t}  \right]\nonumber\\
		\lesssim N^p\sum_{n=1}^{\infty}\dfrac{1}{n!}\left[ ||\,|t|^n\eta(t) ||_{\emph{B}_{2,1}^{1/2}} + 2^n||t^n\eta(t)\chi_{\R_{-}}(t) ||_{\emph{B}_{2,1}^{1/2}}\right].\hspace{2cm}\label{2lli}
		\end{align}
		Because $\emph{H}^{1}\hookrightarrow B_{2,1}^{1/2}$ and $||\chi_{\R_{-}} f ||_{\emph{H}_{t}^{1}}\,\lesssim\,||f ||_{\emph{H}_{t}^{1}}$ if $f(0) = 0$, the right-hand side of \eqref{2lli} is
	\begin{equation}\label{2llj}
	\begin{split}
	\lesssim& \;N^p\sum_{n=1}^{\infty} \dfrac{1}{n!}\,2^n ||\,|t|^n\eta(t) ||_{\emph{H}_{t}^{1}}\,\lesssim\, N^p.
	\end{split}
	\end{equation}
Combining this last estimate with \eqref{2llf} we conclude the estimate \eqref{2llh} for $N<1$. This finishes the estimate of {\bf Term (II).}

	\noindent
	{\bf Term (I).} \, Using a Taylor expansion of $e^{it\tau}$, the Cauchy-Schwarz inequality in $\tau$ and remembering the estimates of the integrals in \eqref{2lle}, we obtain
		\begin{align}
		||P_{L}(I) ||_{L_{t}^{2}}\,&\lesssim\,\sum_{n=1}^{\infty}\dfrac{1}{n!}||\phiN(\xi)P_{L}(t^n\theta_p) ||_{L_{t}^{2}} \int_{|\tau|\leq 1}\,\dfrac{|\tau|^n}{| i\tau+|\xi|^p|}\,|\widetilde{\omega}_{N}(\tau)|\,d\tau\nonumber\\
		&\lesssim \sum_{n=1}^{\infty}\dfrac{1}{n!}||P_{L}(\phiN(\xi)t^n\theta_p) ||_{L_{t}^{2}}\left[ \int_{|\tau|\leq 1}\,\dfrac{|\widetilde{\omega}_{N}(\tau)|^2}{\langle{ i\tau+|\xi|^p}\rangle}\,d\tau \right]^{1/2}\left[ \int_{|\tau|\leq 1}\,\dfrac{|\phiN(\xi)||\tau|^2\langle{i\tau+|\xi|^p}\rangle}{| i\tau+|\xi|^p|^2}\,d\tau\right]^{1/2}.\nonumber\\
		&\lesssim \cjap{N}^{p/2}N^{-p}\sum_{n=1}^{\infty}\dfrac{1}{n!}||\phiN(\xi)P_{L}(t^n\theta_p) ||_{L_{t}^{2}} \sum_{L_1}\cjap{L_1+N^p}^{-1/2} \|\varphi_{L_1}\widetilde{\omega}_{N}\|_{L^2_{\tau}}.\label{1lla}
		\end{align}
		Thus,  it suffices to show that 
		\begin{equation}\label{1lla2}
		\sum_{L}\cjap{L+N^p}^{1/2}\sum_{n=1}^{\infty}\frac{1}{n!}\|\phiN(\xi)P_L(t^n\theta_p(t))\|_{L^2_t}\lesssim \cjap{N}^{p/2}.
		\end{equation}

 Again, using \eqref{decay}, we get  
		\begin{equation}\label{decay2}
		|\,\widehat{t^n\theta_{p}(t)}(\tau)\,|\, \leq\, 2^n \|\theta_{p}\|_{L^1_t}\leq 2^n\min\{ |\tau|^{-1},\,\cjap{\xi}^{p}|\tau|^{-2} \}.
		\end{equation}
		With this in hands and arguing as in \eqref{2llg}, we have that the left-hand side of \eqref{1lla2} is
		$$\lesssim \cjap{N}^{p/2}\sum_{n=1}^{+\infty}\frac{2^n}{n!}\lesssim\cjap{N}^{p/2},$$
		the desired bound.\\

	\noindent
	{\bf Term (III).} \,Writting $ \hat{g}(\tau) := \dfrac{\widetilde{\omega}_{N}(\tau)}{it+|\xi|^{p}}\,\chi_{\{|\tau| \geq 1\}}$, so we need to prove that
	\begin{equation}\label{3lla}
	\sum_{L}\langle{L+N^{p}}\rangle^{1/2}||P_{L}(\theta_p g) ||_{L_{t}^{2}}\,\lesssim\,\sum_{L}\langle{L+N^{p}}\rangle^{1/2}||P_{L} g ||_{L_{t}^{2}},
	\end{equation}
	noting that $||P_{L} g ||_{L_{t}^{2}}\lesssim \cjap{L+N^{p}}^{-1}\|\varphi_L\varphi_N\tilde{\omega}\|_{L^2_{\tau}}$. First, using a paraproduct decomposition as in \eqref{aak} we have
	\[
	P_{L}(\theta_pg) = P_{L}\left( \sum_{M\gtrsim L}( P_{\lesssim M}\theta_{p}P_{M} g +  P_{M}\theta_{p}P_{\lesssim M} g ) \right) = P_{L}(III_{1}) +  P_{L}(
	III_{2}).
	\]

		We estimate the contributions of these terms separately. In both cases, we divide the proof when $L \leq \langle{N}\rangle^{p}$ and $L > \langle{N}\rangle^{p}$.\\
	
	\noindent
	{\bf Term $(III_1)$}. For the sum over $ L\geq {\langle{N}\rangle}^{p}$,  rearranging the sums we have
		\begin{align}
		\sum_{L\geq {\langle{N}\rangle}^{p}}{\langle{L+N^{p}}\rangle}^{1/2}||P_{L}(III_1) ||_{L_{t}^{2}}\,&\lesssim\,\sum_{L\geq {\langle{N}\rangle}^{p}}\langle{L}\rangle^{1/2}\sum_{M\gtrsim L}\|P_{\lesssim M}\theta_p|_{L^{\infty}_t} ||P_{M} g ||_{L_{t}^{2}}\nonumber\\
		&\lesssim\,\sum_{M\geq {\langle{N}\rangle}^{p}}\sum_{L\leq  M}\langle{L}\rangle^{1/2} ||P_{M} g ||_{L_{t}^{2}}\nonumber\\
		\,&\lesssim\, \sum_{M}\langle{M}\rangle^{1/2}||P_{M} g ||_{L_{t}^{2}}.\label{3llb}
		\end{align}
		Now we deal with the sum over $ L\leq \langle{N}\rangle^{p}$.  Because $\supp(\widehat{P_{\lesssim M}\theta_p}) \subset \{ |\tau| \sim M\}\cup\{|\tau| \ll M \}\cap\supp(\widehat{\theta_p}) $, this case is divided into two subcases, namely, when $ supp(\widehat{\theta_p})\subset \{ |\tau| \sim M \} $ or when  $\supp(\widehat{\theta_p})\subset \{ |\tau| \ll M \}$.\\
		\indent
		For the first subcase,  applying the Bernstein inequality and rearranging the sums, we obtain
		\begin{align}
		\sum_{L\leq {\langle{N}\rangle}^{p}}{\langle{L+N^{p}}\rangle}^{1/2}||P_{L}(III_1) ||_{L_{t}^{2}}&\lesssim 
		\sum_{L\leq \cjap{N}^{p} } \cjap{ L+N^{p} }^{1/2}\sum_{M\gtrsim L}||P_{L}(P_{\lesssim M}\theta_{p} P_{M}g) ||_{L_{t}^{2}}\nonumber\\ &\lesssim\, \sum_{L\leq \cjap{N}^{p} }\cjap{N}^{p/2}\sum_{M\gtrsim L}||P_{L}(P_{M}\theta_{p} P_{M}g) ||_{L_{t}^{2}}\nonumber\\
		&\lesssim \sum_{L\leq \cjap{N}^{p} }\cjap{N}^{p/2}\sum_{M\gtrsim L} L^{1/2}||P_{M}\theta_p ||_{L_{t}^{2}}||P_{M} g ||_{L_{t}^{2}} \nonumber\\
		&\lesssim \sum_{M}\langle{N}\rangle^{p/2} M^{1/2}||P_{M}\theta_p ||_{L_{t}^{2}}||P_{M} g ||_{L_{t}^{2}}\nonumber\\
		&\lesssim\,\sum_{M}{\langle{N}\rangle}^{p/2}  ||P_{M} g ||_{L_{t}^{2}}\label{3llc}
		\end{align}
		where in the last inequality we used the estimate $||P_{M}\theta_p ||_{L_{t}^{2}}\,\lesssim\, || |\tau|^{-1}\varphi_{M}(\tau)||_{L_{\tau}^{2}}\,\lesssim\, M^{- 1/2} $. \\
		Now, for the second subcase,  we must have $M \sim L$. Thus we have
		\begin{align}
		\sum_{L\leq {\langle{N}\rangle}^{p}}{\langle{L+N^{p}}\rangle}^{1/2}||P_{L}(III_1) ||_{L_{t}^{2}}&\lesssim
		\sum_{L\leq \cjap{N}^{p} } \cjap{ L+N^{p} }^{1/2}\sum_{M\gtrsim L}||P_{L}(P_{\lesssim M}\theta_{p} P_{M}g) ||_{L_{t}^{2}}\nonumber\\ &\lesssim\, \sum_{L\leq \cjap{N}^{p} }\cjap{N}^{p/2}\sum_{M\sim L}||P_{L}(P_{\ll M}\theta_{p} P_{M}g) ||_{L_{t}^{2}}\nonumber\\
		&\lesssim \sum_{L\leq \cjap{N}^{p} }\cjap{N}^{p/2}||P_{\ll L}\theta_p ||_{L_{t}^{\infty}}||P_{L} g ||_{L_{t}^{2}} \nonumber\\
		&\lesssim \sum_{L}\langle{N}\rangle^{p/2} ||P_{L} g ||_{L_{t}^{2}},\label{3llc1}
		\end{align}
		and we finished this subcase and therefore the desired estimate for ($III_1$).
	
	\medskip
	\noindent
	
		{\bf Term $(III_2)$.} \,For the sum over $ L\geq {\langle{N}\rangle}^{p}$, since $ |\widehat{\theta_p}|\,\lesssim \,\langle{\xi}\rangle^p|\tau|^{-2}$, then we have by Young's inequality 
		\[
		||P_{L}(P_{M}\theta_p P_{\lesssim M}g) ||_{L_{t}^{2}}\,\lesssim\, ||\varphi_{M}\widehat{\theta_p} ||_{L_{\tau}^{1}}||P_{\lesssim M} g ||_{L_{t}^{2}}\,\lesssim\,\langle{N}\rangle^{p/2}M^{-1}||P_{\lesssim M}g ||_{L_{t}^{2}}.
		\]
		Therefore
		\begin{align}
		\sum_{L\geq {\langle{N}\rangle}^{p}}{\langle{L+N^{p}}\rangle}^{1/2}||P_{L}(III_2) ||_{L_{t}^{2}}&\lesssim
		\sum_{L\geq {\langle{N}\rangle}^{p}}\langle{L+N^{p}}\rangle^{1/2}\sum_{M\gtrsim L}||P_M\theta_pP_{\lesssim M}g ||_{L_{t}^{2}}\nonumber\\
		&\lesssim\,\sum_{L\geq {\langle{N}\rangle}^{p}} L^{1/2}{\langle{N}\rangle}^{p}||P_{\lesssim M}g ||_{L_{t}^{2}}\sum_{M\gtrsim L}M^{-1}\nonumber\\
		&\lesssim {\langle{N}\rangle}^{p/2}||P_{\lesssim M}g ||_{L_{t}^{2}}.\label{3lld}
		\end{align}
		For the case $ L\leq {\langle{N}\rangle}^{p}$, first one can see that
		\begin{align}
		\sum_{L\leq {\langle{N}\rangle}^{p}}\langle L+N^{p}\rangle^{1/2}||P_{L}(III_2) ||_{L_{t}^{2}}&\lesssim
		\sum_{L\leq \langle{N}\rangle^{p}}\langle L+N^{p}\rangle^{1/2}\sum_{M\gtrsim L}||P_L(P_M\theta_p P_{\lesssim M}g) ||_{L_{t}^{2}}\nonumber\\
		&\lesssim \sum_{L\leq \langle{N}\rangle^{p}}\langle L+N^{p}\rangle^{1/2}\sum_{M\gtrsim L}||P_L(P_M\theta_p P_{ M}g) ||_{L_{t}^{2}}\nonumber\\
		&\quad+  \sum_{L\leq \langle{N}\rangle^{p}}\langle L+N^{p}\rangle^{1/2}\sum_{M\gtrsim L}||P_L(P_M\theta P_{\ll M}g) ||_{L_{t}^{2}}\label{3lld2}
		\end{align}
		The first term has already been estimated (see \eqref{3llc}). For the second term, observing that we are in the case $\supp(\hat{g})\subset \{|\tau|\ll M\}$, thus $M\sim L$ and 
		\begin{align}
		\sum_{L\lesssim \langle{N}\rangle^{p}}\langle{L+N^{p}}\rangle^{1/2}\sum_{M\gtrsim L}||P_{L}(P_{M}\theta_{p} P_{\lesssim M}\, g) ||_{L_{t}^{2}} &\lesssim\sum_{L} {\langle{N}\rangle}^{p/2}||P_{L}\,\theta_{p} ||_{L_{t}^{2}}||P_{\lesssim L}\,  g||_{L_{t}^{\infty}}\nonumber\\
		&\lesssim \sum_{L} {\langle{N}\rangle}^{p/2}L^{-1/2}\sum_{M\ll L}M^{1/2}\|\phiN \hat{g}\|_{L^2_{\tau}}\nonumber\\
		&\lesssim \sum_{M} {\langle{N}\rangle}^{p/2}||P_{M} g||_{L_{t}^{2}},\label{3lle}
		\end{align}
		and we complete the estimate of term ($III_2$) and therefore the proof of Lemma \ref{lem-a}.
	
\end{proof}

\begin{proposition}\label{prop-b}
	Let $p>0 $ and $ \mathcal{L}: \mathcal{N}^{- p/2} \,\rightarrow\, \mathcal{S}^{- p/2}$ the linear operator defined by

		\begin{equation}\label{op-int}
		\mathcal{L}f(t, x) = \chi_{\R^{+}}(t)\eta(t)\int^{t}_{0}\emph{W}_{\!p}(t-t', t-t') f(t') dt' + \chi_{\R^{-}}(t)\eta(t)\int^{t}_{0}\emph{W}_{\!p}(t-t', t+t') f(t') dt'.
		\end{equation}

	If $f\in\mathcal{N}^{- p/2}$ , then 
	\begin{equation}\label{op}
	||\mathcal{L}f ||_{\mathcal{S}^{- p/2}}\,\lesssim\, || f ||_{\mathcal{N}^{- p/2}}
	\end{equation}
\end{proposition}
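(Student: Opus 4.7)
\textbf{Proof plan for Proposition \ref{prop-b}.}

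The plan is to decompose any $f\in\mathcal{N}\pow{-p/2}$ as $f = f_1 + f_2$ with $f_1\in X\pow{-p/2,-1/2,1}_p$ and $f_2\in Y\pow{-p/2,-1/2}_p$, and (taking the infimum afterwards) reduce to proving
\[
\|\mathcal{L}f_1\|_{\mathcal{S}\pow{-p/2}} \lesssim \|f_1\|_{X\pow{-p/2,-1/2,1}_p}, \qquad \|\mathcal{L}f_2\|_{\mathcal{S}\pow{-p/2}} \lesssim \|f_2\|_{Y\pow{-p/2,-1/2}_p}.
\]
The central identity driving both estimates is a Fourier-side computation. Writing $\hat{f}(t',\xi)=\int e^{it'\tau'}\tilde{f}(\tau',\xi)\,d\tau'$ and carrying out the $t'$-integration in \eqref{op-int}, then substituting $\tau=\tau'-\xi^3$, one finds that the $\chi_{\R^+}$ and $\chi_{\R^-}$ pieces unify into a single expression via the factor $e^{(t-|t|)|\xi|^p}$, so that for every dyadic $N$,
\[
\FT_x[\PN \mathcal{L}f](t,\xi) = e^{it\xi^3}\kappa_{p,\xi}(t), \quad \text{with } \tilde{\omega}(\tau,\xi) := \tilde{f}(\tau+\xi^3,\xi),
\]
where $\kappa_{p,\xi}$ is precisely the function from Lemma \ref{lem-a}.

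For the $X$-norm bound, Plancherel in $t$ gives $\|\PN\QL\mathcal{L}f\|_{L^2_{xt}} = \bigl\|\|P_L\kappa_{p,\xi}\|_{L^2_t}\bigr\|_{L^2_\xi}$. Applying Minkowski's inequality ($\ell^1_L L^2_\xi\hookrightarrow L^2_\xi\ell^1_L$ fails but the reverse inclusion lets us push $\sum_L$ inside $\|\cdot\|_{L^2_\xi}$), then invoking Lemma \ref{lem-a} pointwise in $\xi$ and the triangle inequality in $L^2_\xi$, we obtain
\[
\sum_L \cjap{L+N^p}^{1/2}\|\PN\QL\mathcal{L}f\|_{L^2_{xt}} \lesssim \sum_{L'}\cjap{L'+N^p}^{-1/2}\|\PN Q_{L'}f\|_{L^2_{xt}},
\]
using that the change of variables $\tau\mapsto\sigma-\xi^3$ identifies $\|\varphi_{L'}(\tau)\varphi_N(\xi)\tilde{\omega}\|_{L^2_{\tau,\xi}}$ with $\|\PN Q_{L'}f\|_{L^2_{xt}}$. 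Multiplying by $\cjap{N}^{-p/2}$, squaring and $\ell^2$-summing in $N$ yields $\|\mathcal{L}f_1\|_{X\pow{-p/2,1/2,1}_p}\lesssim \|f_1\|_{X\pow{-p/2,-1/2,1}_p}$.

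For the $Y$-norm bound on $\mathcal{L}f_2$, I would differentiate the Duhamel formula \eqref{op-int} under the integral. On $\R^+$ this gives $(\partial_t+\partial_x^3+L_p)\mathcal{L}f_2 = \eta(t)f_2 + \eta'(t)R_p(t)$, with $R_p(t)=\int_0^t W_{\!p}(t-t',t-t')f_2(t')\,dt'$, and an analogous identity on $\R^-$. Adding the zeroth-order term yields, for each dyadic $N$,
\[
(\partial_t+\partial_x^3+L_p+I)\PN\mathcal{L}f_2 = \eta(t)\PN f_2 + \PN\mathcal{L}f_2 + \eta'(t)\PN R_p(t).
\]
The first summand has $L^1_t L^2_x$ norm controlled by $\|\PN f_2\|_{L^1_t L^2_x}$; the second, since $\eta$ is compactly supported, by $\|\PN \mathcal{L}f_2\|_{L^\infty_t L^2_x}$ (which embeds into $\mathcal{S}\pow 0$ via Corollary \ref{cor}); the third by the homogeneous estimate of Proposition \ref{prop-a} applied at the endpoint, exploiting the time-localization provided by $\eta'$. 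Weighting by $\cjap{N}^{-p/2}$ and $\ell^2$-summing delivers $\|\mathcal{L}f_2\|_{Y\pow{-p/2,1/2}_p}\lesssim \|f_2\|_{Y\pow{-p/2,-1/2}_p}$.

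The main technical obstacle is the $Y$-to-$Y$ estimate, where the boundary contribution $\eta'(t)\PN R_p(t)$ must be shown to belong to $L^1_t L^2_x$ with the right $N$-weight; unlike the $X$ estimate, which follows almost mechanically from Lemma \ref{lem-a}, this step requires carefully absorbing the non-homogeneous factor $\eta'$ and recycling the homogeneous semigroup bound \eqref{esti-a}. A secondary but pervasive point is the careful application of Minkowski's inequality when trading pointwise-in-$\xi$ bounds for $L^2_\xi$-integrated ones; if done in the wrong direction these would either lose the $\ell^1$ summability over $L$ or the sharpness of the $p/2$ derivative count.
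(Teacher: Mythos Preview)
Your $X$-to-$X$ estimate via the Fourier-side identity and Lemma~\ref{lem-a} is correct and matches the paper exactly.

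Your $Y$-to-$Y$ argument has two problems. The minor one: your handling of the ``second summand'' $\PN\mathcal{L}f_2$ and the ``third summand'' $\eta'(t)\PN R_p(t)$ is circular or misdirected. Invoking Corollary~\ref{cor} to bound $\|\PN\mathcal{L}f_2\|_{L^\infty_tL^2_x}$ by $\|\PN\mathcal{L}f_2\|_{\mathcal{S}^0}$ puts the very quantity you are estimating on the right-hand side, and Proposition~\ref{prop-a} concerns the free evolution, not the Duhamel term. The correct (and simpler) route, which the paper uses, is a direct contraction bound: since $\|S_p(t-t')g\|_{L^2_x}\le\|g\|_{L^2_x}$, one has $\|(\eta+\eta')\int_0^t S_p(t-t')\PN f\,dt'\|_{L^1_tL^2_x}\lesssim \|\eta+\eta'\|_{L^1_t}\|\PN f\|_{L^1_tL^2_x}$.

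The serious gap is your claim that on $\R^-$ one has ``an analogous identity''. This is false and is precisely where the difficulty lies. For $t<t'<0$ one has $|t+t'|=-t-t'$, so $W_p(t-t',t+t')=e^{-(t-t')\partial_x^3}e^{(t+t')L_p}$ satisfies $(\partial_t+\partial_x^3-L_p)W_p=0$, \emph{not} $(\partial_t+\partial_x^3+L_p)W_p=0$. Consequently the differentiation you propose does not close: applying $(\partial_t+\partial_x^3+L_p+I)$ to $\mathcal{L}f$ on $\R^-$ leaves an extra term $2L_p\mathcal{L}f$, which at frequency $N$ costs a factor $N^p$ that cannot be absorbed by the other terms. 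The paper resolves this by writing $\partial_t+\partial_x^3+L_p+I=(\partial_t+\partial_x^3-L_p+I)+2L_p$; the first operator is handled by the ``analogous'' computation (yielding a right-hand side $\Theta$ with $\|\PN\Theta\|_{L^1_tL^2_x}\lesssim\|\PN f\|_{L^1_tL^2_x}$), while the dangerous $2L_p$ piece is controlled by an energy argument: pairing $\PN\mathcal{L}f$ with itself and using the equation gives the differential inequality $N^p\|\PN\mathcal{L}f\|_{L^2_x}\lesssim \|\PN\mathcal{L}f\|_{L^2_x}+\tfrac{d}{dt}\|\PN\mathcal{L}f\|_{L^2_x}+\|\PN\Theta\|_{L^2_x}$, which upon integration over $]t,0[$ yields $N^p\|\PN\mathcal{L}f\|_{L^1_{t<0}L^2_x}\lesssim\|\PN f\|_{L^1_tL^2_x}$. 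Your proposal contains no mechanism to recover this $N^p$ factor, so as written the $Y$-estimate fails on $\R^-$.
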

\begin{proof}
		By the definition of $\mathcal{S}\pow{-p/2}$ and $\mathcal{N}\pow{-p/2}$  it  suffices  to prove that
		\begin{equation}\label{op-a}
		||\mathcal{L}f ||_{X^{- p/2, 1/2, 1}}\,\lesssim\, || f ||_{X^{- p/2, -1/2, 1}}.
		\end{equation}
		and
		\begin{equation}\label{op-b}
		||\mathcal{L}f ||_{Y^{- p/2, 1/2}}\,\lesssim\, || f ||_{Y^{- p/2, -1/2}}.
		\end{equation}
		Noting that $\emph{W}_{\!p}(t,\,t')=e^{-t\partial_{x}^3}e^{-|t'|L_p}$, then 
		\begin{align}
		\mathcal{L}\{f\}(t,\,x)& = \eta(t)\chi_{\R^{+}}(t)\int_0^t e^{-t\partial_x^3}e^{-|t-t'|L_p}e^{t'\partial_x^3}f(t')dt'\nonumber\\
		&\quad + \eta(t)\chi_{\R^{-}}(t)\int_0^t e^{-t\partial_x^3}e^{-|t+t'|L_p}e^{t'\partial_x^3}f(t')dt'\nonumber\\
		&=e^{-t\partial_x^3}\left[\eta(t)\int_0^t \left\{\chi_{\R^{+}}(t)e^{-|t-t'|L_p}+\chi_{\R^{-}}(t)e^{-|t+t'|L_p}\right\}e^{t'\partial_x^3}f(t')dt'\right]\nonumber\\
		&=e^{-t\partial_x^3}\left[\eta(t)\int_0^t e^{-(|t|-t')L_p}e^{t'\partial_x^3}f(t')dt'\right].\label{op-c}
		\end{align}
		Writing $\omega(t',\,x)=e^{t'\partial_x^3}f(t',\,x)$, and observe that 
		\begin{align}
		e^{-(|t|-t')L_p}\omega(t',\,x)&=\mathcal{F}_{\xi}^{-1}\left( e^{-|t||\xi|^p}e^{t'|\xi|^p}\mathcal{F}_{t'}^{-1}\{\tilde{\omega}(\tau,\,\xi)\} \right)\nonumber\\
		&=\int_{\R^2}e^{(i\tau+|\xi|^p)t'}e^{i x\xi}e^{-|t||\xi|^p}\tilde{\omega}(\tau,\,\xi)d\tau d\xi,\label{op-d}
		\end{align}
		then we can conclude that
		\begin{align}
		\mathcal{L}\{f\}(t,\,x) &= e^{-t\partial_x^3}\left[   \eta(t)\int_{\R^2}e^{ix\xi}\frac{e^{it\tau}e^{(t-|t|)|\xi|^p}-e^{-|t||\xi|^p}}{i\tau+|\xi|^p}\tilde{\omega}(\tau,\,\xi)d\tau d\xi \right]\nonumber\\
		&= e^{-t\partial_x^3}\left[\mathcal{F}_{\xi}^{-1} \left\{\eta(t)  \int_{\R}\frac{e^{it\tau}e^{(t-|t|)|\xi|^p}-e^{-|t||\xi|^p}}{i\tau+|\xi|^p}\tilde{\omega}(\tau,\,\xi)d\tau \right\}  \right].\label{op-e}
		\end{align}
		The estimate \eqref{op-a} follows from Proposition \ref{prop-b}, noting that
		\begin{align}
		\|\PN\QL\mathcal{L}\{f\}\|_{L^2_{x,t}}&=\left\|\phiN(\xi)\varphi_L(\tau-\xi^3)\mathcal{F}_t\left\{\eta(t) \int_{\R}\frac{e^{it\tau}e^{(t-|t|)|\xi|^p}-e^{-|t||\xi|^p}}{i\tau+|\xi|^p}\tilde{\omega}(\tau,\,\xi)d\tau  \right\}(\tau-\xi^3)\right\|_{L^2_{\tau,\xi}}\nonumber\\
		&=\|\phiL(\tau-\xi^3)\mathcal{F}_t(\kappa_{p,\,\xi})(\tau-\xi^3)\|_{L^2_{\tau,\xi}} \nonumber\\
		&=\|\|P_L\kappa_{p,\,\xi}\|_{L^2_t}\|_{L^2_{\xi}},
		\end{align}
		where $\kappa_{ p,\,\xi}(t)$ was defined in Lemma \ref{lem-a}.\\
		Now we establish the estimate \eqref{op-b}. It suffices to prove that 
		\begin{equation}\label{op-f}
		\|(\partial_t+\partial_x^3+L_p+I)\PN \mathcal{L}\{f\}\|_{L^1_tL^2_x}\lesssim \|\PN f\|_{L^1_tL^2_x}.
		\end{equation}
		After this, squaring and summing in $N$, we get the estimate \eqref{op-b}. In order to prove \eqref{op-f}, because $\|g\|_{L^1_tL^2_x}=\|g\|_{L^1_{t\geq 0}L^2_x}+ \|g\|_{L^1_{t< 0}L^2_x}$,
		we will treat the cases $t> 0$ and $t<0$ separately.  So, in the first case, using \eqref{op-int}, one can see that
		\begin{align}
		(\partial_t+\partial_x^3+L_p+I)(\mathcal{L}\{f\})&=\partial_t\!\left(\!\eta(t)\!\!\int_0^t\!\!\emph{W}_{\!p}(t-t',t-t')f(t')dt'\!\right)+ \eta(t)\!\!\int_0^t\!(\partial_x^3+L_p)\emph{W}_{\!p}(t-t',t-t')f(t')dt'\nonumber\\
		&\quad +\eta(t)\int_0^t\!\emph{W}_{\!p}(t-t',t-t')f(t')dt'\nonumber\\
		&= \eta(t) f(t)+(\eta'(t)+\eta(t))\!\!\int_0^tS_p(t-t')f(t')dt'\nonumber\\
		&\qquad \quad+ \eta(t)\int_0^t(\partial_t+\partial_x^3+L_p)S_p(t)S_p(-t')f(t')dt'\nonumber\\
		&=\eta(t)f(t)+(\eta'(t)+\eta(t))\int_0^tS_p(t-t')f(t')dt',\label{op-h}
		\end{align}
		where  the last line was obtained by remembering that $\emph{W}_p(t,t)=S_p(t)$ ($t>0$) and  $\partial_tS_p(t)F=-(\partial_x^3+L_p)S_p(t)F$. Thus, we have 
		\begin{align}
		\|(\partial_t+\partial_x^3+L_p+I)\PN( \mathcal{L}\{f\})\|_{L^1_{t}L^2_x}&\lesssim \|\PN f\|_{L^1_tL^2_x}+\|\eta'+\eta\|_{L^1_t}\int_0^{+\infty}\|\phiN(\xi) \hat{f}(t',\,\xi)\|_{L^2_{\xi}}dt'\nonumber\\
		&\lesssim\|\PN f\|_{L^1_tL^2_x},\label{op-h1}
		\end{align}
		the desired estimate. Now we treat the case $t<0$. As mentioned in \cite{MV}, this  is  harder than the former case, because the presence of $\emph{W}_{\!p}(t-t',t+t')$ implies that $\mathcal{L}\{f\}$ does not satisfy the same equation for negative times. Indeed, with $t<t'<0$,  we have  $\emph{W}_{\!p}(t-t',\,t+t')=e^{-t(\partial_x^3-L_p)}e^{t'(\partial_x^3+L_p)}$ and  $e^{-t(\partial_x^3-L_p)}$ is the semi-group associated to another PDE: $(\partial_t+\partial_x^3-L_p)u=0$. In order to avoid this problem we decompose 
		\begin{equation}\label{decomp}\partial_t+\partial_x^3+L_p+I=(\partial_t+\partial_x^3-L_p+I)+2L_p.
		\end{equation}
		With this in hands, first we see that
		\begin{align}
		(\partial_t+\partial_x^3-L_p+I)(\mathcal{L}\{f\})&=\partial_t\!\left(\!\eta(t)\!\!\int_0^t\!\!\emph{W}_{\!p}(t-t',t-t')f(t')dt'\!\right)+ \eta(t)\!\!\int_0^t\!(\partial_x^3-L_p)\emph{W}_{\!p}(t-t',t+t')f(t')dt'\nonumber\\
		&\quad +\eta(t)\int_0^t\!\emph{W}_{\!p}(t-t',t+t')f(t')dt'\nonumber\\
		&= \eta(t)\emph{W}_{\!p}(0,\,2t) f(t)+(\eta'(t)+\eta(t))\!\!\int_0^t\emph{W}_{\!p}(t-t',t+t')f(t')dt'\nonumber\\
		&\qquad \quad+ \eta(t)\int_0^t(\partial_t+\partial_x^3-L_p)e^{-t(\partial_x^3-L_p)}e^{t'(\partial_x^3+L_p)}f(t')dt'\nonumber\\
		&=\eta(t)\emph{W}_{\!p}(0,\,2t)f(t)+(\eta'(t)+\eta(t))\int_0^t\emph{W}_{\!p}(t-t',t+t')f(t')dt'.\label{op-i}
		\end{align}
		Thus, doing the same calculations as in \eqref{op-h1} one can see that
		\begin{align}\label{op-j}
		\|(\partial_t+\partial_x^3-L_p+I)(\PN\mathcal{L}\{f\})\|_{L^1_tL^2_x}\lesssim \|\PN f\|_{L^1_tL^2_x}.
		\end{align}
		So it remains to prove a similar estimate for the term $ 2L_p(\PN \mathcal{L}\{f\})$. 
		First, we observe that $$ \|L_p(\PN \mathcal{L}\{f\})\|_{L^1_{t<0}L^2_x}\sim N^p\|\PN \mathcal{L}\{f\}\|_{L^1_{t<0}L^2_x}.$$
		Denoting by $\Theta$ the right-hand side of \eqref{op-i} we can see that %
		\begin{equation}\label{op-k}
		\PN\mathcal{L}\{f\}=-(\partial_t+\partial_x^3-L_p)(\PN\mathcal{L}\{f\})+\PN\Theta.
		\end{equation}
		Thus, integrating by parts and using Cauchy-Schwarz inequality we obtain
		\begin{align}
		\|\PN\mathcal{L}\{f\}\|^2_{L^2_x}&= \big{\langle} \PN \mathcal{L}\{f\},\,\PN\mathcal{L}\{f\}\big{\rangle}_{L^2_x}\nonumber\\
		&=-\frac{1}{2}\frac{d}{dt}\|\PN\mathcal{L}\{f\}\|_{L^2_x}^2+\big{\langle} L_p(\PN\mathcal{L}\{f\}),\,\PN\mathcal{L}\{f\}\big{\rangle}_{L^2_x} +\big{\langle} \PN\Theta,\,\PN\mathcal{L}\{f\}\big{\rangle}_{L^2_x}\nonumber\\
		&\gtrsim -\|\PN\mathcal{L}\{f\} \|_{L^2_x}\frac{d}{dt}\|\PN\mathcal{L}\{f\}\|_{L^2_x}+N^p\| \PN \mathcal{L}\{f\}\|_{L^2_x}^2 -\|\PN\Theta\|_{L^2_x}\|\PN\mathcal{L}\{f\}\|_{L^2_x}.
		\end{align}
		Therefore,
		\begin{equation}\label{op-l}
		N^p\| \PN \mathcal{L}\{f\}\|_{L^2_x}^2\lesssim\|\PN\mathcal{L}\{f\} \|_{L^2_x}^2+ \|\PN\mathcal{L}\{f\} \|_{L^2_x}\frac{d}{dt}\|\PN\mathcal{L}\{f\}\|_{L^2_x}+\|\PN\Theta\|_{L^2_x}\|\PN\mathcal{L}\{f\}\|_{L^2_x}.
		\end{equation}
		Now, for $t<0$ such that $\|\PN\mathcal{L}\{f\}\|_{L^2_x}\neq 0$, we can divide both sides in \eqref{op-l} by $\|\PN\mathcal{L}\{f\}\|_{L^2_x}$ to obtain
		\begin{equation}\label{op-m}
		N^p\| \PN \mathcal{L}\{f\}\|_{L^2_x}\lesssim\|\PN\mathcal{L}\{f\} \|_{L^2_x}+ \frac{d}{dt}\|\PN\mathcal{L}\{f\}\|_{L^2_x}+\|\PN\Theta\|_{L^2_x}.
		\end{equation}
		But, this last inequality is still true for $t<0$ such that $\|\PN\mathcal{L}\{f\}\|_{L^2_x}=0$. Indeed, $t\mapsto \|\PN\mathcal{L}\{f\}\|_{L^2_x}$ is non negative and so $(d/dt)\|\PN\mathcal{L}\{f\}\|_{L^2_x}=0$ whenever $\|\PN\mathcal{L}\{f\}\|_{L^2_x}=0$. Therefore, \eqref{op-m} is valid for all $t<0$. Integrating this inequality on $]t,\,0[$ we get
		\begin{equation}\label{op-n}
		N^p\int_t^0\| \PN \mathcal{L}\{f\}\|_{L^2_x}dt'\lesssim\int_t^0\|\PN\mathcal{L}\{f\} \|_{L^2_x}dt' -\|\PN\mathcal{L}\{f\}\|_{L^2_x}+\int_t^0\|\PN\Theta\|_{L^2_x}dt',
		\end{equation}
		and so 
		\begin{equation}\label{op-o}
		\|L_p(\PN \mathcal{L}\{f\})\|_{L^1_{t<0}L^2_x}\sim N^p\| \PN \mathcal{L}\{f\}\|_{L^1_tL^2_x}\lesssim\|\PN\mathcal{L}\{f\} \|_{L^1_tL^2_x}+\|\PN\Theta\|_{L^1_tL^2_x},
		\end{equation}
		and we finish this case noting that, obviously, $\|\PN\mathcal{L}\{f\} \|_{L^2_xL^1_t}\lesssim \|\PN f\|_{L^2_xL^1_t}$.

\end{proof}
%


	\section{Bilinear Estimates}
	
		In this section, we will need the elementary results in the Appendix and here we establish the following important estimate:
	
	\begin{proposition}\label{bilinear} For all $u,v\in \mathcal{S}^{\pow{-p/2}}$, with $p\geq2$, we have
		\begin{equation}\label{est_bil}
		\normaNp{\partial_x(uv)}\lesssim \normaSp{u}\normaSp{v}.
		\end{equation}
		
	\end{proposition}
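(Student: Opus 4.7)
The plan is to mirror the dyadic Littlewood--Paley analysis of \cite{MV}, adapting every step to the symbol $|\xi|^p$ with $p \geq 2$. I would start from the decomposition
\[
\partial_x(uv) \;=\; \sum_{N, N_1, N_2} P_N \partial_x\bigl( P_{N_1}u \cdot P_{N_2}v \bigr),
\]
and use the symmetry $u \leftrightarrow v$ together with the support constraint $\xi = \xi_1 + \xi_2$ to reduce the triple sum to three frequency regimes: \emph{(A) High--Low}, $N \sim N_1 \gtrsim N_2$; \emph{(B) High--High to Low}, $N_1 \sim N_2 \gg N$; \emph{(C) Low--Low}, $N, N_1, N_2 \lesssim 1$. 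The algebraic backbone of the argument is the resonance identity
\[
\xi^3 - \xi_1^3 - \xi_2^3 \;=\; 3\,\xi\,\xi_1\,\xi_2 \qquad \text{when } \xi = \xi_1 + \xi_2,
\]
which forces the largest modulation $L_{\max}$ among $L, L_1, L_2$ (the variables localized to $|\tau-\xi^3| \sim L$, etc.) to satisfy $L_{\max} \gtrsim N N_1 N_2$.

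Within each regime I would split further according to which of $L, L_1, L_2$ is dominant, and then estimate the resulting trilinear $L^2_{t,x}$ pairings by Plancherel, H\"older, Bernstein, and the almost-orthogonality of the $Q_L$ cutoffs, combined with the Kato smoothing \eqref{kato}, the $L^\infty_t H^{-p/2}_x$ embedding \eqref{elem04}, and the $L^\infty_t L^2_x$ gain $N^{-1}$ of \eqref{elem05}, all transported from $e^{-t\partial_x^3}$ to the resolution space $\mathcal{S}^{0}$ via the Extension Lemma \ref{lema3}. In case (A) the output frequency $N$ is large, so the factor $\langle L + N^p\rangle^{1/2} \gtrsim N^{p/2}$ is available in the $X^{-p/2, -1/2, 1}_p$ component of $\mathcal{N}^{-p/2}$; this $N^{p/2}$ together with the weight $\langle N\rangle^{-p/2}$ in the norm exactly absorbs the $N_1^{p/2}$ coming from the $\mathcal{S}^{-p/2}$-norm of the high-frequency input and the $\partial_x$ loss, and summability over $N_2 \lesssim N_1 \sim N$ follows at once. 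Case (C) is handled by Bernstein and H\"older without any subtlety.

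The crux is case (B). Here the output frequency $|\xi| \sim N$ is small, so the dissipative enhancement $|\xi|^p$ at the output is useless, and one must rely on the resonance alone, namely $L_{\max} \gtrsim N N_1^2$. When $L_{\max} \in \{L_1, L_2\}$ the high modulation is absorbed into the $X$-norm of one of the high-frequency inputs and the estimate closes by the same $L^2$-based tools. The genuinely delicate sub-case is $L_{\max} = L$, where the low-frequency output carries the large modulation; the $X^{-p/2,-1/2,1}_p$-piece of $\mathcal{N}^{-p/2}$ alone is insufficient, and one must place $P_N\partial_x(uv)$ into the $Y^{-p/2,-1/2}_p$-piece instead. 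I would then bound $\|P_N \partial_x(P_{N_1}u\, P_{N_2}v)\|_{L^1_t L^2_x}$ directly by using the time cut-off in the extended equation \eqref{pre04}, placing each high-frequency input in $L^\infty_t L^2_x$ via \eqref{elem05} (gaining $N_1^{-1}$ twice), paying $N$ for the derivative, and paying a Bernstein factor $N^{1/2}$ for the $L^\infty_x$ loss. The resulting dyadic exponent in the remaining sum over $N \ll N_1$ closes precisely when $p \geq 2$.

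The main obstacle is this last sub-case of (B): the combination of a resonant low-frequency output and the vanishing of the dissipative smoothing at that output requires both a delicate use of the $Y^{-p/2,-1/2}_p$ component of $\mathcal{N}^{-p/2}$ and a careful tracking of dyadic exponents so that the triple sum over $(N, N_1, N_2)$ converges. The threshold $p = 2$ is exactly the borderline at which this bookkeeping barely closes, and this is what forces the restriction $p \geq 2$ in the statement.
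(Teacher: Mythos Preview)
Your overall architecture matches the paper's, but the heart of the argument --- case (B) with $L_{\max}=L$ --- has a genuine gap. First, there is no time cut-off available here: Proposition~\ref{bilinear} is stated for arbitrary $u,v\in\mathcal{S}^{-p/2}$, not for solutions of \eqref{pre04}, so you cannot pass from $L^1_t$ to $L^\infty_t$ for free. Second, the inequality \eqref{elem05} almost certainly carries a typo (the $N^{-1}$ gain comes from Kato smoothing and lives in $L^\infty_xL^2_t$, not $L^\infty_tL^2_x$); but even granting it as written, your bookkeeping does not close. Placing the output in $Y^{-p/2,-1/2}_p$ and both high inputs in $L^\infty_tL^2_x$ with the alleged $N_1^{-1}$ gain yields, after the $N$ and $N^{1/2}$ losses,
\[
\langle N\rangle^{-p/2}\,N^{3/2}\,N_1^{-2}\,\|P_{N_1}u\|_{\mathcal{S}^0}\|P_{N_1}v\|_{\mathcal{S}^0}
\;\sim\; N^{3/2-p/2}\,N_1^{p-2}\,\|P_{N_1}u\|_{\mathcal{S}^{-p/2}}\|P_{N_1}v\|_{\mathcal{S}^{-p/2}},
\]
and the sum $\sum_{1\lesssim N\ll N_1} N^{3/2-p/2}$ diverges for every $p\le 3$, while for $p>3$ the surviving factor $N_1^{p-2}$ diverges. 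The claim that the exponent ``barely closes'' at $p=2$ is therefore incorrect.

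What the paper actually does in this sub-case is substantially more delicate. It introduces a modulation threshold $\lambda=N_1^2N^{1-\epsilon}$ and splits according to whether $\min(L_1,L_2)\lesssim\lambda$ or $L_1,L_2\gtrsim\lambda$; the former piece goes into the $X^{-p/2,-1/2,1}_p$-component and uses the bilinear Bernstein-type bound \eqref{Bernst3}, while the latter goes into the $Y^{-p/2,-1/2}_p$-component. Crucially, this direct computation only closes for $p>3$. For $p=2$ the paper invokes the \cite{MV} argument, which relies on the Kato smoothing effect \eqref{kato} for the $X$-part and on \eqref{elem03} for the $Y$-part. The intermediate range $2<p\le 3$ is then handled by writing $p=\theta\cdot 2+(1-\theta)p_1$ with $p_1>3$ and interpolating the two endpoint estimates via H\"older in the dyadic sum. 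Your sketch is missing all three of these ingredients --- the $\lambda$-splitting, the Kato-smoothing input at $p=2$, and the interpolation --- and without them the high--high, $L_{\max}=L$ contribution cannot be controlled in the range $2\le p\le 3$.
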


	\begin{proof} Using dyadic decomposition, one can write the left-hand side of \eqref{est_bil} as 
		\begin{equation}\label{est_bil01}
		\normaNp{\partial_x(uv)}^2\sim   \sum_{N}\normaNp{\sum_{N_1,\,N_2}\PN\partial_x(\PNU u\,\PND v)}^2.
		\end{equation}	

		Now, via $\FT_x$, because $ |\xi|\sim N$, $|\xi_1|\sim N_1$ and $|\xi_2|\sim N_2$, where $\xi=\xi_1+\xi_2$ (by convolution), one can see that  $\PN\partial_x(\PNU u\,\PND v)$ vanishes unless one of the following cases holds:

		\begin{description}
			\item[high-low interaction] $N\sim N_2$ and $N_1\lesssim N$;
			\item[low-high interaction] $N\sim N_1$ and $N_2\lesssim N$;
			\item[high-high interaction] $N\ll N_1\sim N_2$.
		\end{description}
		Thus, we have
		\begin{align}
		\eqref{est_bil01}  \lesssim {}&  \sum_N\normaNp{\sum_{N\ll N_1\sim N_2}\PN\partial_x(\PNU u\,\PND v)}^2 +\sum_N\normaNp{\sum_{N_2\sim N}\sum_{N_1\lesssim N_2}\PN\partial_x(\PNU u\,\PND v)}^2 \nonumber \\
		\, {}& +  \sum_N\normaNp{\sum_{N_1\sim N}\sum_{N_2\lesssim N_1}\PN\partial_x(\PNU u\,\PND v)}^2      .\label{est_bil02}	
		\end{align}
		The first sum is 
		\begin{align}
		\sim {}& \normaNp{\sum_N \sum_{N_1\gg N_1}\PN\partial_x(\PNU u\,\PNU v)}^2 \sim\normaNp{\sum_{N_1}\sum_{N\ll N_1}\PN\partial_x(\PNU u\,\PNU v)}^2  \lesssim  \sum_{N_1} \normaNp{P_{\ll N_1}\partial_x(\PNU u\,\PNU v)}^2. \label{est_bil03}
		\end{align}
		The second sum is 
		\begin{align}
		\lesssim {}& \sum_N\normaNp{\sum_{N_1\lesssim N}\PN\partial_x(\PNU u\,\PN v)}^2\nonumber\lesssim  \sum_N \normaNp{\PN\partial_x(P_{\lesssim N} u\,\PN v)}^2. \label{est_bil04}
		\end{align}
		By simetry with the second sum, we have analogous bound for the third sum. So, taking account these estimates, in order to prove \eqref{est_bil}, we need to prove the following estimates
		\begin{equation}\tag{HH}\label{HH}
		\normaNp{P_{\ll N_1}\partial_x(\PNU u\,\PNU v)} \lesssim \normaSp{\PNU u}\normaSp{\PNU v}, \ \ \ \forall \ N_1 \ \textrm{dyadic},
		\end{equation}
		\begin{equation}\tag{HL}\label{HL}
		\normaNp{\PN\partial_x(P_{\lesssim N} u\,\PN v)}\lesssim \normaSp{u}\normaSp{\PN v},\ \ \ \forall\ N\ \textrm{dyadic},
		\end{equation}
		and a similar estimate for the (symmetric) case \textbf{low-high}.\\
		
		First, we start to prove the 

		\subsection{ (HL) - estimate}
		\ \\
		
		We can see that
		\begin{equation}\label{est_bil05}
		\normaNp{\PN\partial_x(P_{\lesssim N} u\,\PN v)}\lesssim \sum_{N_1\lesssim\min\{1,N\}}\normaNp{\PN\partial_x(\PNU u\,\PN v)} +\sum_{1\lesssim N_1\lesssim N}\normaNp{\PN\partial_x(\PNU u\,\PN v)}.
		\end{equation}
		Using the H\"{o}lder and Bernstein inequalities and remember that $p\geq2$, the first sum  is
		\begin{align}
		\, \lesssim {}& \sum_{N_1\lesssim \min\{1,N\}}\normaYp{\PN\partial_x(\PNU u\,\PN v)}\lesssim \sum_{N_1\lesssim 1} \cjap{N}^{-p/2}N\|\PNU u\,\PN v\|_{\!_{L^1_tL^2_x}}\lesssim \sum_{N_1\lesssim 1} \|\PNU u\|_{\!_{L^2_tL^{\infty}_x}}\!\|\PN v\|_{\!_{L^2_tL^2_x}}   \nonumber \\
		\, \lesssim {}& \sum_{N_1\lesssim 1}N_1^{1/2}\|\PNU u\|_{\!_{L^2}}\|\PN v\|_{\!_{L^2}} \lesssim \|u\|_{\!_{L^2}}\|\PN v\|_{\!_{L^2}} \sum_{N_1\lesssim 1} N_1^{1/2}. \label{est_bil06}
		\end{align}
		Now, for the second sum we need to work a little bit more. Decomposing the bilinear term as
		\begin{equation}\label{est_bil07}
		\PN\partial_x(P_{\lesssim N}u\,\PN v)=\sum_{1\lesssim N_1\lesssim N}\, \, \sum_{L,\,L_1,\,L_2}\PN\QL\partial_x(\PNU\QLU u\,\PN\QLD v),
		\end{equation}
		via $\FT$, because $|\xi-\tau|\sim L$, $|\xi_1-\tau_1|\sim L_1$ and $ |\xi_2-\tau_2|\sim L_2$, where $\xi=\xi_1+\xi_2$ and $\tau=\tau_1+\tau_2$, and remembering the resonance relation
		\begin{equation}\label{est_bil08}
		\xi^3-\tau=(\xi_1+\xi_2)^3-(\tau_1+\tau_2)=\xi_1^3-\tau_1 +\xi_2-\tau_2+3\xi\xi_1\xi_2,
		\end{equation}
		we can conclude that the right-hand side in \eqref{est_bil07} vanishes unless $ \max\{N^2N_1,\,L_{\mathrm{med}}\}\leq L_{\max}\leq 5\max\{N^2N_1,\,L_{\mathrm{med}}\}$, i.e.,
		\begin{equation}\label{est_bil09}
		L_{\max}\sim \max\{N^2N_1,\,L_{\mathrm{med}}\},
		\end{equation}
		where $L_{\max}=\max\{L,\,L_1,\,L_2\}$, $L_{\min}=\min\{L,\,L_1,\,L_2\}$ and $L_{\mathrm{med}}\in \{L,\,L_1,\,L_2\}\backslash \{L_{\max},\,L_{\min}\}$.
		
		In view of \eqref{est_bil09}, we divide the proof in three cases, depending on the $L_{\max}$.
		
		\subsubsection{$L_{\max}=L$}\label{maxL}
		\ \\
		
		In this case we have that $L\gtrsim N^2N_1$. Thus, by the Bernstein inequality \eqref{Bernst2} and remember the estimates  \eqref{elem02} and \eqref{elem04}  we have that the second sum in \eqref{est_bil05} is
			\begin{align}
			\lesssim {}& \sum_{\smath{1\lesssim N_1\lesssim N}}\ \sum_{\smath{L\gtrsim N^2N_1}}\normaXp{\PN\QL\partial_x(\PNU u\,\PN v)}    \nonumber\\
			\lesssim {}& \sum_{\smath{1\lesssim N_1\lesssim N}}\ \sum_{\smath{L\gtrsim N^2N_1}} \cjap{N}^{-p/2}\cjap{L+N^p}^{-1/2}N\|\PNU u \, \PN v\|_{\!_{L^2}}    \nonumber\\
			\lesssim {}& N^{-p/2+1} \sum_{\smath{1\lesssim N_1\lesssim N}}\|\PNU u \, \PN v\|_{\!_{L^2}}\ \sum_{\smath{L\gtrsim N^2N_1}}L^{-1/2}     \nonumber  \\
			\lesssim {}&  N^{-p/2+1} \sum_{\smath{1\lesssim N_1\lesssim N}}\|\PNU u\|_{\!_{L^{\infty}_tL^2_x}}\|\PN v\|_{\!_{L^2}} N^{-1}{N_1}^{-1/2} N^{1/2}     \nonumber\\
			\lesssim {}& N^{-p/2} \sum_{\smath{1\lesssim N_1\lesssim N}}N_1^{p/2-1/2}\|u\|_{\!_{L^{\infty}_tH\pow{-p/2}_x}} N^{1/2}\|\PN v\|_{\!_{L^2}}     \nonumber\\
			\lesssim {}& \normaSp{u}\normaSp{\PN v}  \sum_{\smath{1\lesssim N_1\lesssim N}}(N_1/N)^{p/2-1/2} \nonumber
			\end{align}

		and this establishes the desired estimates, noting that the sum converge and is $\lesssim 1$, because $p\geq 1$.
		
		\subsubsection{$L_{\max}=L_1$}\label{maxL1}
		\ \\

		In this case, we have  $L_1\sim N^2N_1$ or $L_1\sim L_{\textrm{med}}$. The latter case implies that $L_1\sim L_{\textrm{med}}\gtrsim N^2N_1$ and thus, we have two subcases
		\begin{enumerate}
			\item $L_{\textrm{med}}=L$, and so $L_{\max}=L_1\sim L\gtrsim N^2N_1$;
			\item $L_{\textrm{med}}\neq L$, and so $L_{\textrm{med}}=L_2$ and $L_1\sim L_2\gtrsim N^2N_1$.
		\end{enumerate}
		Therefore, we have that the second sum in \eqref{est_bil05} is
		\begin{align}
			\lesssim\!\sum_{\smath{1\lesssim N_1\lesssim N}} \sum_{\ \smath{L_1\!\sim\! N^2N_1}}\!\normaNp{\PN\partial_x(\PNU \QLU u\,\PN  v)}    +\! \!\sum_{\smath{1\lesssim N_1\lesssim N}} \sum_{\ \smath{L_{1}\!\sim\! L_{2}\!\gtrsim\! N^2\!N_1}} \!\!\normaNp{\PN\partial_x(\PNU \QLU u\,\PN \QLD v)} \hspace{2.3cm}\nonumber \\
			\ \  + \sum_{\smath{1\lesssim N_1\lesssim N}} \sum_{\ \smath{ L\!\gtrsim\! N^2\!N_1}}\!\normaNp{\PN\QL\partial_x(\PNU u\,\PN  v)}.\hspace{7cm}\label{est_bil10}
			\end{align}
			The last sum was treated in subsection \ref{maxL}. Now, by H\"{o}lder and Bernstein inequalities and remembering the estimates \eqref{elem02} and \eqref{elem03}, the first sum in  \eqref{est_bil10} is
		\begin{align}
		\, \lesssim {}& \sum_{\smath{1\lesssim N_1\lesssim N}}\! \cjap{N}^{-p/2} N\|\PNU Q_{N^2\!N_1}u\, \PN v\|_{\!_{L^1_tL^2_x}}\lesssim \sum_{\smath{1\lesssim N_1\lesssim N}}\! \cjap{N}^{-p/2} N\|\PNU Q_{N^2\!N_1}u\|_{\!_{L^2_tL^{\infty}_x}} \|\PN v\|_{\!_{L^2}} \nonumber\\
		\lesssim {}& \sum_{\smath{1\lesssim N_1\lesssim N}}\! \cjap{N}^{-p/2} (N^2N_1)^{1/2}\|Q_{N^2\!N_1}\PNU u\|_{\!_{L^2_tL^{2}_x}} \|\PN v\|_{\! \mathcal{S}\pow{-p/2}}     \nonumber\\
		\lesssim {}& \sum_{\smath{1\lesssim N_1\lesssim N}}\! \left(N_1/N\right)^{p/2} \left\{\sum_{L}\bigl[L^{1/2}\|Q_{L}\cjap{N_1}^{-p/2}\PNU u\|_{\!_{L^2_tL^{2}_x}}\bigr]^2\right\}^{1/2} \|\PN v\|_{\! \mathcal{S}\pow{-p/2}} \nonumber\\
		\lesssim {}& \sum_{\smath{1\lesssim N_1\lesssim N}} \left(N_1/N\right)^{p/2}\| \cjap{N_1}^{-p/2}\PNU u\|_{\!\mathcal{S}\pow {0}}\|\PN v\|_{\! \mathcal{S}\pow{-p/2}} \lesssim \| u\|_{\!\mathcal{S}\pow {-p/2}}\|\PN v\|_{\! \mathcal{S}\pow{-p/2}} \sum_{\smath{1\lesssim N_1\lesssim N}} \left(N_1/N\right)^{p/2}, \nonumber
		\end{align}
		noting that the sum above is $\lesssim 1$, because $p>0$.
		
		\ \\
		
		It remains to treat the second sum in \eqref{est_bil10}. Arguing as before, this term is 
		\begin{align}
		\lesssim {}& \sum_{\smath{1\lesssim N_1\lesssim N}} \sum_{\ \smath{L_{1}\!\gtrsim\! N^2\!N_1}} \normaYp{\PN\partial_x(\PNU \QLU u\,\PN \QLU v)} \lesssim \sum_{\smath{1\lesssim N_1\lesssim N}} \sum_{\ \smath{L_{1}\!\gtrsim\! N^2\!N_1}} N^{-p/2+1} N_1^{1/2}\| \PNU\QLU u\|_{\!_{L^2}}\|\PN\QLU v\|_{\!_{L^2}}. \label{est_bil11}
		\end{align}
		Also, is clear that $\| \PNU\QLU u\|_{\!_{L^2}}\|\PN\QLU v\|_{\!_{L^2}}$ is $\lesssim \|\PNU u\|_{\!\mathcal{S}\pow{-p/2}}\|\PN v\|_{\!\mathcal{S}\pow{-p/2}}$. But, on the other hand this same product is $\lesssim L_1^{-1}\|\PNU u\|_{\!\mathcal{S}\pow 0}\|\PN v\|_{\!\mathcal{S}\pow 0}$. So, if $\theta\in[0,\,1]$
		\begin{equation}\label{est_bil12}
		\| \PNU\QLU u\|_{\!_{L^2}}\|\PN\QLU v\|_{\!_{L^2}}\lesssim L_1^{-\theta}\|\PNU u\|_{\!\mathcal{S}\pow 0}^{\theta}\|\PNU u\|_{\!\mathcal{S}\pow{-p/2}}^{1-\theta}\|\PN v\|_{\!\mathcal{S}\pow 0}^{\theta}\|\PN v\|_{\!\mathcal{S}\pow{-p/2}}^{1-\theta}.
		\end{equation}
		Using this estimate with $\theta=1/2$ and localization properties (see the Appendix), we have
\begin{equation}\label{est_bil12}
		\| \PNU\QLU u\|_{\!_{L^2}}\|\PN\QLU v\|_{\!_{L^2}}\lesssim L_1^{-1/2}N_1^{p/4}N^{p/4}\|P_{N_1} u\|_{\!\mathcal{S}\pow{ -p/2}}\|P_{N} v\|_{\!\mathcal{S}\pow{-p/2}}.
		\end{equation}

 Considering this inequality in  the right-hand side of \eqref{est_bil11} we get
		\begin{align}
		\lesssim {}& \sum_{\smath{1\lesssim N_1\lesssim N}} \sum_{\ \smath{L_{1}\!\gtrsim\! N^2\!N_1}} N^{-p/2+1} N_1^{1/2}L_1^{-1/2}N_1^{p/4} N^{p/4}\|\PNU u\|_{\!\mathcal{S}\pow{-p/2}}\|\PN v\|_{\!\mathcal{S}\pow{-p/2}} \nonumber\\
		\lesssim{}& N^{-p/4+1}  \|\PN v\|_{\!\mathcal{S}\pow{-p/2}} \sum_{\smath{1\lesssim N_1\lesssim N}}N_1^{1/2+p/4} (N^2\!N_1)^{-1/2} \| P_{N_1}u\|_{\!\mathcal{S}\pow{-p/2}}
\nonumber\\
\lesssim{}& N^{-p/4}  \|\PN v\|_{\!\mathcal{S}\pow{-p/2}} \sum_{\smath{1\lesssim N_1\lesssim N}}N_1^{p/4} \| P_{N_1}u\|_{\!\mathcal{S}\pow{-p/2}}
\nonumber\\
\lesssim{}& N^{-p/4}  \|\PN v\|_{\!\mathcal{S}\pow{-p/2}} \left(\sum_{\smath{1\lesssim N_1\lesssim N}}N_1^{p/2}\right)^{1/2}      \left(\sum_{\smath{1\lesssim N_1\lesssim N}}\| P_{N_1}u\|_{\!\mathcal{S}\pow{-p/2}}^{2}\right)^{1/2}
\nonumber\\
\lesssim{}&  \|\PN v\|_{\!\mathcal{S}\pow{-p/2}} \left(\sum_{\smath{1\lesssim N_1\lesssim N}}\| P_{N_1}u\|_{\!\mathcal{S}\pow{-p/2}}^{2}\right)^{1/2},
		\end{align}
		and using the localization property \eqref{locprop} the desired estimate follows.

		\subsubsection{$L_{\max}=L_2$}\label{maxL2}
		\ \\
		
		In this case we have $L_2\sim N^2N_1$ or $L_2\sim L_{\textrm{med}}$. The latter case implies that $L_2\sim L_{\textrm{med}}\gtrsim N^2N_1$, and thus we have two subcases
		\begin{enumerate}
			\item $L_{\textrm{med}}=L_1$, and so $L_{1}\sim L_2 \gtrsim N^2N_1$;
			\item $L_{\textrm{med}}\neq L_1$, and so $L_{\textrm{med}}=L$ and $L_2\sim L\gtrsim N^2N_1$.
		\end{enumerate}
		Therefore, we have that 
		\begin{align}
		\sum_{\smath{1\lesssim N_1\lesssim N}}\normaNp{\PN\partial_x(\PNU u\,\PN v)}  \sim  \sum_{\smath{1\lesssim N_1\lesssim N}} \normaNp{\sum_{\smath{L,\,L_1,\,L_2}}\PN\QL\partial_x(\PNU \QLU u\,\PN \QLD v)}  \hspace{-2.5cm}  {}& \nonumber\\
		\lesssim\!\sum_{\smath{1\lesssim N_1\lesssim N}} \sum_{\ \smath{L_2\!\sim\! N^2N_1}}\!\normaNp{\PN\partial_x(\PNU u\,\PN \QLD v)}  {}&  + \!\sum_{\smath{1\lesssim N_1\lesssim N}} \sum_{\ \smath{L_{1}\!\sim\! L_{2}\!\gtrsim\! N^2\!N_1}} \!\normaNp{\PN\partial_x(\PNU \QLU u\,\PN \QLD v)} \nonumber \\
		\ \  + \sum_{\smath{1\lesssim N_1\lesssim N}} \sum_{\ \smath{ L\!\gtrsim\! N^2\!N_1}} \normaNp{\PN\QL\partial_x(\PNU u\,\PN  v)}. \label{est_bil13}
		\end{align}
		The last two sums were treated in subsection \ref{maxL1} item (ii) and  in subsection \ref{maxL}, respectively. For the first sum, as before, we can obtain that
		\begin{align}
		\sum_{\smath{1\lesssim N_1\lesssim N}} \sum_{\ \smath{L_2\!\sim\! N^2N_1}}\!\normaNp{\PN\partial_x(\PNU u\,\PN \QLD v)}\lesssim {}& \sum_{\smath{1\lesssim N_1\lesssim N}}N^{-p/2+1}N_1^{1/2}\|\PNU u\|_{\!_{L^2}}\|\PN Q_{N^2\!N_1} v\|_{\!_{L^2}}    \nonumber\\
		\lesssim {}& \sum_{\smath{1\lesssim N_1\lesssim N}}N^{-p/2}\| \PN u\|_{\!\mathcal{S}\pow{-p/2}}\bigl[(N^2N_1)^{1/2} \|\PN Q_{N^2\!N_1} v\|_{\!_{L^2}}  \bigr]   \nonumber \\ 
		\lesssim {}& N^{-p/2}\left\{\sum_{\smath{1\lesssim N_1\lesssim N}} \|\PN u\|^2_{\!\mathcal{S}\pow{-p/2}}  \right\}^{1/2}\!\left\{ \sum_{\smath{N^2\!\lesssim L\lesssim \!N^3}} \bigl[   L^{1/2}\|\QL\PN v\|_{\!_{L^2}} \bigr]^2  \right\}^{1/2} \nonumber\\
		\lesssim{}& \|u\|_{\!\mathcal{S}\pow{-p/2}}\|\PN v\|_{\!\mathcal{S}\pow{-p/2}}, \nonumber
		\end{align}
		and we finish the proof for the case \textbf{HL-estimate}.\\

		Now, we finish the proof of \eqref{est_bil01}, establishing the
		
		\subsection{(HH) - estimate.} 
		\ \\
		
		We can see that 
		\begin{equation}\label{est_bil14}
		\normaNp{P_{\ll N_1}\partial_x(\PNU u\,\PNU v)} \lesssim  \sum_{\smath{N\lesssim \min\{1,N_1\}}}\normaNp{\PN\partial_x(\PNU u\,\PNU v)} + \sum_{\smath{1\lesssim N\ll N_1}}\normaNp{\PN\partial_x(\PNU u\,\PNU v)}.
		\end{equation}
		Again, the estimate for the first term is easier than for the second. In fact, if $p \geq 0$, the first term is
		\begin{align}
		\lesssim {}&\sum_{\smath{N\lesssim 1}}\normaYp{\PN\partial_x(\PNU u\,\PNU v)} \lesssim \sum_{\smath{N\lesssim1}} N^{-p/2}N\|\PN(\PNU u\,\PNU v)\|_{\!_{L^1_tL^2_x}} \nonumber\\
		\lesssim {}& \sum_{\smath{N\lesssim 1}} N^{-p/2}N^{3/2}\|\PNU u \|_{\!_{L^2}} \|\PNU v\|_{\!_{L^2}} \lesssim  \normaSp{\PNU u}\normaSp{\PNU v}\sum_{\smath{N\lesssim 1}} N^{3/2} .\nonumber
		\end{align}
		For the second term, using dyadic decompostion and triangular inequality
			\begin{equation}\label{est_bil15}\sum_{\smath{1\lesssim N\ll N_1}}\normaNp{\PN\partial_x(\PNU u\,\PNU v)}\leq\sum_{\smath{1\lesssim N\ll N_1}}\, \, \sum_{\smath{L,\,L_1,\,L_2}}\normaNp{\PN\QL\partial_x(\PNU\QLU u\,\PN\QLD v)}.
			\end{equation}
		Using again the resonance relation \eqref{est_bil08} and arguing as before, we may restrict ourself to the region where 
		\begin{equation}\label{estbil16}
		L_{\max}\sim \max\{ N_1^2N,\,L_{\textrm{med}} \},
		\end{equation}
		and this leads us to consider the following three cases. By simmetry we can suppose that $L_1 \geq L_2$.

		\subsubsection{$L_{\max}=L$}
		\ \\
		
		In this case we have $L\gtrsim N_1^2N$. Also, for $\lambda>0$ to be choosen later,
			\begin{equation}
			\sum_{\smath{L\gtrsim N_1^2N}}\sum_{L_1,L_2}=\sum_{\smath{L\gtrsim N_1^2N}}\,\,\sum_{\substack{ L_1 \lesssim \lambda, \\ \textrm{or} \\L_2 \lesssim \lambda}}+\sum_{\smath{L\gtrsim N_1^2N}}\,\,\sum_{\substack{L_1\gtrsim \lambda,\\ L_2 \gtrsim \lambda}} 
			\leq \sum_{L_1 \lesssim \lambda}+\sum_{L_2 \lesssim \lambda}+\sum_{\substack{L_1\gtrsim \lambda,\\ L_2 \gtrsim \lambda}}.
			\end{equation} 
			Therefore the right-hand side of \eqref{est_bil15} is
			\begin{align}
			&\leq\sum_{\smath{1\lesssim N\ll N_1}} \sum_{\smath{L\gtrsim N_1^2N}}\,\,\sum_{\substack{ L_1 \lesssim \lambda, \\ \textrm{or} \\L_2 \lesssim \lambda}}\normaNp{\PN\QL\partial_x(\PNU\QLU u\,\PN\QLD v)}\nonumber\\
			&\hspace{3cm}+\sum_{\smath{1\lesssim N\ll N_1}}\sum_{\smath{L\gtrsim N_1^2N}}\sum_{L_1, L_2 \gtrsim \lambda,}\normaNp{\PN\QL\partial_x(\PNU\QLU u\,\PN\QLD v)} \nonumber\\
			&\leq \sum_{\smath{1\lesssim N\ll N_1}}\sum_{L_1 \lesssim \lambda}\normaNp{\PN\QL\partial_x(\PNU\QLU u\,\PN\QLD v)}+\sum_{\smath{1\lesssim N\ll N_1}}\sum_{L_2 \lesssim \lambda}\normaNp{\PN\QL\partial_x(\PNU\QLU u\,\PN\QLD v)}\nonumber\\
			&\ \ \ \ \ +\sum_{\smath{1\lesssim N\ll N_1}}\sum_{\substack{L_1\gtrsim \lambda,\\ L_2 \gtrsim \lambda}}\normaNp{\PN\QL\partial_x(\PNU\QLU u\,\PN\QLD v)}\nonumber\\
			&= \mathcal{L}_1(p)+\mathcal{L}_2(p)+\mathcal{L}_3(p).\label{est_bil17}
			\end{align}

		We will only  estimate $\mathcal{L}_1(p)$ and $\mathcal{L}_3(p)$, because the estimate of $\mathcal{L}_2(p)$ is similar to the estimate of $\mathcal{L}_1(p)$ . We consider the following three subcases:
		\\
		\\
		{\bf  Subcase I:}    If $p>3$\\
		
		Let $\lambda =N_1^{\alpha} N^{\beta}$, where $\alpha$ and $\beta$ will be choosen later. Taking advantage of the $X_p\pow{-p/2,-1/2,1}$ part of $\mathcal{N}\pow{-p/2}$ and using \eqref{Bernst3} we obtain
			\begin{align}
			\mathcal{L}_1(p) \leq & \sum_{\smath{1\lesssim N\ll N_1}}\sum_{L_1 \lesssim \lambda}\|\PN\QL\partial_x(\PNU\QLU u\,\PN\QLD v)\|_{X\pow{-p/2,-1/2,1}}\nonumber\\
			\lesssim& \sum_{\smath{1\lesssim N\ll N_1}} \sum_{\substack{L \gtrsim N_1^2N,\\ L_1 \leq \lambda}}N^{-p/2+1}L^{-1/2}\|P_N(Q_{L_1}P_{N_1 }u P_{N_1 }v) \|_{L^2}\label{eqPRX1}\\
			\lesssim & \sum_{\smath{1\lesssim N\ll N_1}} \sum_{\substack{L \gtrsim N_1^2N,\\ L_1 \leq \lambda}}N^{-p/2+1}L^{-1/2}N^{1/2}L_1^{1/2}\|P_{N_1 }u  \|_{L^2}\|P_{N_1 }v  \|_{L^2}\nonumber\\
			\lesssim & \|P_{N_1 }u  \|_{L^2}\|P_{N_1 }v  \|_{L^2} N_1^{\alpha/2-1} \sum_{\smath{1\lesssim N\ll N_1}}  N^{-p/2+1+\beta/2}.\nonumber
			\end{align}
		
		Thus for $\alpha=2$, $\beta =1-\epsilon$ and $0<\epsilon<1$, we obtain
		\begin{equation}
		\begin{split}
		\mathcal{L}_1(p)  \lesssim & \|P_{N_1 }u  \|_{L^2}\|P_{N_1 }v  \|_{L^2}   \\
		\lesssim & \|P_{N_1 }u  \|_{\mathcal{S}\pow{-p/2}}\|P_{N_1 }v  \|_{\mathcal{S}\pow{-p/2}}.
		\end{split}
		\end{equation}
For $\mathcal{L}_3(p)$, considering the norm $Y_p\pow{-p/2,-1/2}$, we have
			\begin{equation}\label{eqPRX5}
			\begin{split}
			\mathcal{L}_3(p)  \leq &\sum_{\smath{1\lesssim N\ll N_1}} \sum_{L_1, L_2 \gtrsim \lambda}\|\PN\QL\partial_x(\PNU\QLU u\,\PN\QLD v)\|_{Y\pow{-p/2,-1/2}_p}\\
			\lesssim & \sum_{\smath{1\lesssim N\ll N_1}} N^{-p/2+1}\|P_N \left( P_{N_1 }Q_{\gtrsim \lambda}u P_{N_1 }Q_{\gtrsim \lambda}v\right) \|_{L^1_t L^2_x}\\
			\lesssim & \sum_{\smath{1\lesssim N\ll N_1}} N^{-p/2+1}N^{1/2}\|P_{N_1 }Q_{\gtrsim \lambda}u \|_{L^2}\| P_{N_1 }Q_{\gtrsim \lambda}v)  \|_{L^2}\\
			\lesssim & \|P_{N_1 }u  \|_{L^2}\|P_{N_1 }v  \|_{L^2} \sum_{1\lesssim N \ll N_1}  N^{-(p+3)/2}\\
			\lesssim &  \|P_{N_1 }u  \|_{\mathcal{S}^{-p/2}}\|P_{N_1 }v  \|_{\mathcal{S}^{-p/2}}.
			\end{split}\end{equation}
			\\
		
		{\bf  Subcase II:}    If $ p = 2$ \\
		
			This case was treated in \cite{MV} also considering $\alpha=2$ and $\beta=1-\epsilon$. Indeed let $\mathcal{X}(p)$ the  right side of the second inequality in \eqref{eqPRX1}, using the Kato smoothing effect, was proved in \cite{MV} that
			\begin{equation}\label{eqPRX2}
			\begin{split}
			\mathcal{L}_1(2)  \leq &\sum_{\smath{1\lesssim N\ll N_1}}\sum_{L_1 \lesssim \lambda}\|\PN\QL\partial_x(\PNU\QLU u\,\PN\QLD v)\|_{X\pow{-1,-1/2,1}}\\
			\lesssim& \mathcal{X}(2)\\
			\lesssim &  \|P_{N_1 }u  \|_{\mathcal{S}^{-1}}\|P_{N_1 }v  \|_{\mathcal{S}^{-1}}. 
			\end{split}
			\end{equation}
			Let  $\mathcal{Y}(p)$ the right side of the second inequality in \eqref{eqPRX5}, using the inequality \eqref{elem03}, was proved in \cite{MV} that
			\begin{equation}\label{eqPRX2.1}
			\begin{split}
			\mathcal{L}_3(2) \leq & \sum_{\smath{1\lesssim N\ll N_1}} \sum_{L_1, L_2 \gtrsim \lambda}\|\PN\QL\partial_x(\PNU\QLU u\,\PN\QLD v)\|_{Y\pow{-1,-1/2}}\\
			\lesssim &\mathcal{Y}(2)\\
			\lesssim &  \|P_{N_1 }u  \|_{\mathcal{S}^{-1}}\|P_{N_1 }v  \|_{\mathcal{S}^{-1}}. 
			\end{split}
			\end{equation}
			\\

	{\bf  Subcase III:}    If $ 2<p\leq 3$\\
		
		Let $p_0=2$ and we consider $p_1>3$, therefore $p= \theta p_0+(1-\theta)p_1$, where $\theta \in (0,1)$.
		
		As above we have
		\begin{equation}\label{eqPRX3}
		\begin{split}
		\mathcal{L}_1(p)
		\lesssim\  &\  \mathcal{X}(p)=\sum_{\smath{1\lesssim N\ll N_1}} \sum_{\substack{L \gtrsim N_1^2N,\\ L_1 \leq \lambda}}N^{-p/2+1}L^{-1/2}\|P_N(Q_{L_1}P_{N_1 }u P_{N_1 }v) \|_{L^2}\\   
		= & \sum_{\smath{1\lesssim N\ll N_1}} N^{-p/2}\Bigl\{N \displaystyle\sum_{\substack{L \gtrsim N_1^2N,\\ L_1 \leq \lambda}}L^{-1/2}\|P_N(Q_{L_1}P_{N_1 }u P_{N_1 }v) \|_{L^2}\Bigr\}\\
		=& \sum_{\smath{1\lesssim N\ll N_1}} N^{-\theta p_0/2}\mathcal{H}^{\theta}N^{(1-\theta)p_1/2}\mathcal{H}^{1-\theta}.
		\end{split}
		\end{equation}
		where $$\mathcal{H}=N\!\!\!\displaystyle\sum_{\substack{L \gtrsim N_1^2N,\\ L_1 \leq \lambda}}L^{-1/2}\|P_N(Q_{L_1}P_{N_1 }u P_{N_1 }v) \|_{L^2}.$$ Using H\"older inequality (with $p=1/\theta$ and $q=1/(1-\theta)$), Case I and Case II, we arrive to
		\begin{equation}
		\begin{split}
		\mathcal{X}(p) \leq & \mathcal{X}(p_0)^{\theta}\mathcal{X}(p_1)^{1-\theta}\\
		\lesssim & \left(   \|P_{N_1 }u  \|_{\mathcal{S}^{-p_0/2}}\|P_{N_1 }v  \|_{\mathcal{S}^{-p_0/2}}  \right)^{\theta}\left(   \|P_{N_1 }u  \|_{\mathcal{S}^{-p/2}}\|P_{N_1 }v  \|_{\mathcal{S}^{-p/2}}  \right)^{1-\theta}\\
		\sim & N_1^{-p_0 \theta/2}N_1^{-p_1(1- \theta)/2} \|P_{N_1 }u  \|_{\mathcal{S}^{0}}N_1^{-p_0 \theta/2}N_1^{-p_1(1- \theta)/2} \|P_{N_1 }v  \|_{\mathcal{S}^{0}}\\
		\sim & N_1^{-p/2} \|P_{N_1 }u  \|_{\mathcal{S}^{0}}N_1^{-p/2} \|P_{N_1 }v  \|_{\mathcal{S}^{0}}\\
		\sim &  \|P_{N_1 }u  \|_{\mathcal{S}^{-p/2}}\|P_{N_1 }v  \|_{\mathcal{S}^{-p/2}}.
		\end{split}
		\end{equation}
		Similarly, the estimate for $ \mathcal{L}_3(p)$ follows using again the  Cases I and  II and the interpolation inequality
		$$
		\mathcal{Y}(p) \leq  \mathcal{Y}(p_0)^{\theta}\mathcal{Y}(p_1)^{1-\theta}, \quad p_0=2,\,\, p_1>3.
		$$
		\subsubsection{$L_{\max}=L_1$}
		\ \\
		Using the relation \eqref{estbil16}, first we consider the case $L_1 \sim N_1^2 N$ and we need to estimate 
		\begin{equation}\label{HHeqx11}
		\begin{split}
		\|  \sum_{\smath{1\lesssim N\ll N_1}} P_N \partial_x (P_{N_1}Q_{N_1^2N} u P_{N_1}v)  \|_{Y^{-p/2, -1/2}} \sim & \left(  \sum_{\smath{1\lesssim N\ll N_1}} N^{-p+2} \| P_N  (P_{N_1}Q_{N_1^2N} u P_{N_1}v) \|_{L^1_t L^2_x}^2  \right)^{1/2}\\
		\lesssim &  \left(  \sum_{\smath{1\lesssim N\ll N_1}} N^{-p+3} \| P_{N_1}Q_{N_1^2N} u\|_{L^2}^2\| P_{N_1}v \|_{L^2}^2  \right)^{1/2},
		\end{split}
		\end{equation}
		where was used the inequality \eqref{Bernst1}. Let $\mathcal{T}(p)$ the sum in the right side of the second  inequality of \eqref{HHeqx11}.  If $p>3$ and  considering the inequality \eqref{elem02} it is not hard to see that
		\begin{equation}
		\begin{split}
		\mathcal{T}(p):=&\sum_{\smath{1\lesssim N\ll N_1}} N^{-p+3} \| P_{N_1}Q_{N_1^2N} u\|_{L^2}^2\| P_{N_1}v \|_{L^2}^2\\
		\lesssim&  \| P_{N_1}v \|_{L^2}^2  \| P_{N_1}u \|_{L^2}^2  \sum_{\smath{1\lesssim N \ll N_1}} N^{-p+3}\\
		\lesssim  &  \|P_{N_1 }u  \|_{\mathcal{S}^{-p/2}}^2\|P_{N_1 }v  \|_{\mathcal{S}^{-p/2}}^2, \quad p>3.
		\end{split}
		\end{equation}
		Using a change of variable and \eqref{elem03}, was proved in \cite{MV}
		$$
		\mathcal{T}(2) \lesssim  \|P_{N_1 }u  \|_{\mathcal{S}^{-1}}^2\|P_{N_1 }v  \|_{\mathcal{S}^{-1}}^2.
		$$
		An interpolation  argument as above (see the Case $L_{max}=L$) proves the inequality in the case $2 < p \leq3$.
		
		Finally in the case  $L_1 \sim L_2 \gtrsim N_1^2N$, considering again the $Y^{-p/2,-1/2}_p$ norm we have
		\begin{equation}\label{HHeqx1}
		\begin{split}
		\|  \sum_{\smath{1\lesssim N\ll N_1}} & \sum_{L_1 \sim L_2 \gtrsim  N_1^2N} P_N \partial_x (P_{N_1}Q_{L_1} u P_{N_1}Q_{L_2}v)  \|_{Y_p^{-p/2, -1/2}}   \\
		\lesssim & \sum_{\smath{1\lesssim N\ll N_1}}  \sum_{\smath{L_1 \gtrsim  N_1^2N}}\| P_N \partial_x (P_{N_1}Q_{L_1} u P_{N_1}Q_{L_1}v)  \|_{Y_p^{-p/2, -1/2}}\\
		\lesssim &  \mathcal{K}(p):= \sum_{\smath{1\lesssim N\ll N_1}}  \sum_{\smath{L_1 \gtrsim  N_1^2N}} N^{-p/2}N \| P_N (P_{N_1}Q_{L_1} u P_{N_1}Q_{L_1}v)  \|_{L_t^1 L^2_x},
		\end{split}
		\end{equation}
		Was proved in \cite{MV} that $ \mathcal{K}(2) \lesssim  \|P_{N_1 }u  \|_{\mathcal{S}^{-1}}\|P_{N_1 }v  \|_{\mathcal{S}^{-1}}$. Therefore by the interpolation argument is sufficient to consider $p>3$. In fact using \eqref{Bernst1} and Cauchy-Schwarz inequality, we get
		\begin{equation}\label{HHeqx1.1}
		\begin{split}
		\mathcal{K}(p) \lesssim & \sum_{\smath{1\lesssim N\ll N_1}}  \sum_{L_1 \gtrsim  N_1^2N} N^{-p/2+1} N^{1/2}\| P_{N_1}Q_{L_1} u \|_{L^2} \|P_{N_1}Q_{L_1}v  \|_{ L^2}\\
		\lesssim & \sum_{\smath{1\lesssim N\ll N_1}}  N^{(-p+3)/2} \left( \sum_{L_1 }\|Q_{L_1} P_{N_1} u \|_{L^2}^2 \right)^{1/2} \left( \sum_{L_1 }\|Q_{L_1} P_{N_1} v \|_{L^2}^2 \right)^{1/2}\\
		\lesssim &  \| P_{N_1} u \|_{L^2}\| P_{N_1} v \|_{L^2} \sum_{1\lesssim N \ll N_1}  N^{(-p+3)/2}\\
		\lesssim &   \|P_{N_1 }u  \|_{\mathcal{S}^{-p/2}}\|P_{N_1 }v  \|_{\mathcal{S}^{-p/2}}.
		\end{split}
		\end{equation}

	\end{proof}

	
	\section{Well-posedness}\label{We-Pos}

In this section we obtain well-posedness results. In order to remove some restrictions on the size of the initial data, we change the metric of the resolution space used in the previous sections. We define the space $ \mathcal{Z}_{\beta} := \,\mathcal{S}^{- p/2}\times \mathcal{S}^0 $ and define, for $\beta\geq 1$,  the functional
\[
\| u\|_{\mathcal{Z}_{\beta}}:=  \inf_{\scriptsize\begin{array}{ll}                              
	&u= u_1+u_2 \\
	& u_1\in \mathcal{S}^{- p/2}, u_2\in \mathcal{S}^0
	\end{array}}     \left\{\, || u_1 ||_{\mathcal{S}^{- p/2}} + \dfrac{1}{\beta} || u_2 ||_{\mathcal{S}^0} \,\right\}
\]
for all $\, u\in\mathcal{Z}_{\beta}$. Which defines a new norm on $ \mathcal{S}^{- p/2} $. In addition, this norm is equivalent to $||.||_{\mathcal{S}^{- p/2}}$, i.e. $ || u ||_{\mathcal{Z}_{\beta}} \sim || u ||_{\mathcal{S}^{- p/2}},\quad  u\in \mathcal{S}^{- p/2}$.

In order to remove the restrictions said before, first we   establish the following nonlinear estimate

\begin{proposition}\label{XXav1}
	There exists $\nu > 0 $  such that for all $( u, v )\in \mathcal{S}^{0}\times \mathcal{S}^{- p/2}$  with compact support (in time) contained in $[- T, T ]$, then
	\begin{equation}\label{nao-lin}
	\| \partial_{x}( u v ) \|_{\mathcal{N}^{- p/2}} \lesssim T^{\nu} \| u \|_{\mathcal{S}^0} \| v \|_{\mathcal{S}^{- p/2}}.
	\end{equation}
\end{proposition}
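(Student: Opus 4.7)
The first observation is that Proposition \ref{bilinear}, combined with the trivial embedding $\mathcal{S}^0 \hookrightarrow \mathcal{S}^{-p/2}$ (valid since $p \geq 0$), immediately gives
\[
\|\partial_x(uv)\|_{\mathcal{N}^{-p/2}} \lesssim \|u\|_{\mathcal{S}^{-p/2}}\|v\|_{\mathcal{S}^{-p/2}} \lesssim \|u\|_{\mathcal{S}^0}\|v\|_{\mathcal{S}^{-p/2}}
\]
without any factor of $T$. Thus the real content of the proposition is to upgrade this bound by a $T^{\nu}$-gain, using only the compact time-support hypothesis. My plan is to revisit the proof of Proposition \ref{bilinear} case by case (HH and HL interactions, with the subcases according to $L_{\max}$) and, at each step where a bound of the form $\|P_N w\|_{L^2_{t,x}} \lesssim \|P_N w\|_{\mathcal{S}^{-p/2}}$ (i.e.\ estimate \eqref{elem02}) is used, insert a $T^{\nu}$ factor by interpolation with a stronger, compact-support-based bound.

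The key refinement is the following: since $\operatorname{supp}_t u \subset [-T,T]$, H\"older in $t$ together with Corollary \ref{cor} gives, for every dyadic $N$,
\[
\|P_N u\|_{L^2_{t,x}} \leq (2T)^{1/2}\|P_N u\|_{L^\infty_t L^2_x} \lesssim T^{1/2} N^{-1}\|P_N u\|_{\mathcal{S}^0}.
\]
Interpolating geometrically with $\|P_N u\|_{L^2} \lesssim N^{-p/2}\|P_N u\|_{\mathcal{S}^0}$ (from \eqref{elem02}) with parameter $\theta \in (0,1)$ yields
\[
\|P_N u\|_{L^2} \lesssim T^{\theta/2}\, N^{-p/2 + (p/2-1)\theta}\,\|P_N u\|_{\mathcal{S}^0}.
\]
For $v$, only the $\mathcal{S}^{-p/2}$ norm is available, but the same idea works using \eqref{elem04} in place of \eqref{elem05}, producing an analogous $T^{\theta/2}$-gain at the price of a similarly mild loss in the dyadic decay.

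Substituting these interpolated bounds into the HH and HL estimates, every elementary piece acquires a uniform factor $T^{\theta/2}$, while the geometric-sum structure of Proposition \ref{bilinear} is essentially preserved. The surplus loss $N^{(p/2-1)\theta}$ must be absorbed by the strict geometric gains present in the original estimates, such as $(N_1/N)^{p/2 - 1/2}$, $(N_1/N)^{p/2}$, or $N^{-p+3}$ (in the HH case with $p>3$), all of which have strictly positive exponents when $p \geq 2$. Choosing $\theta = \theta(p) > 0$ sufficiently small, one keeps all dyadic summations convergent and concludes with $\nu = \theta/2$.

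The main obstacle is the uniform convergence of the dyadic sums after the interpolation perturbation. For $p$ strictly greater than $2$ there is clear slack in every subcase. The delicate point is the endpoint $p=2$, where several summations (typically in the HH subcase with $L_{\max} \in \{L_1, L_2\}$ and $L_1 \sim L_2$) are already tight in the Molinet--Vento analysis; there one must track the interpolation exponent carefully and possibly split the regime $N \lesssim 1$ versus $N \gg 1$ so that the $T$-gain survives without destroying any of the borderline summations.
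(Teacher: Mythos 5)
Your overall strategy --- rerun the proof of Proposition \ref{bilinear} and graft onto it a $T^{\nu}$ gain coming from the compact time support --- is the same as the paper's, but the mechanism you propose has concrete gaps. First, your key inequality $\|P_N u\|_{L^2_{t,x}}\lesssim T^{1/2}N^{-1}\|P_N u\|_{\mathcal{S}^0}$ reads \eqref{elem05} as an $L^{\infty}_tL^2_x$ bound with an $N^{-1}$ gain; that $N^{-1}$ comes from the derivative in the Kato smoothing estimate \eqref{kato}, so the estimate produced by Lemma \ref{lema3} is an $L^{\infty}_xL^2_t$ bound (the printed norm is a misprint), and it cannot be combined with H\"older in time. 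What the compact support really gives is $\|P_Nu\|_{L^2}\lesssim T^{1/2}\|P_Nu\|_{L^{\infty}_tL^2_x}\lesssim T^{1/2}\|P_Nu\|_{\mathcal{S}^0}$ via \eqref{elem04}, so your interpolation loses $N^{\theta p/2}$, not $N^{(p/2-1)\theta}$. Second, you are vague about which factor should carry the gain, and this matters: putting it on $v$ (as you suggest via \eqref{elem04}) costs a positive power of the \emph{high} frequency $N$, and the (HL) estimate \eqref{HL} must hold for each fixed $N$ with a uniform constant --- the gains $(N_1/N)^{p/2-1/2}$, $(N_1/N)^{p/2}$ only control the sum over $N_1\lesssim N$, they give no spare negative power of $N$ --- so that route fails. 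The gain has to be placed on the $\mathcal{S}^0$ factor $u$, whose frequency-local slack $\|P_{N_1}u\|_{\mathcal{S}^{-p/2}}\sim\langle N_1\rangle^{-p/2}\|P_{N_1}u\|_{\mathcal{S}^0}$ is the actual resource; this is also what defuses the borderline sums you worry about at $p=2$ (note that $N^{-p+3}$ is not a summable gain at $p=2$, so your list of absorbing factors is off, but with $u$ measured in $\mathcal{S}^0$ at the high frequency the HH case has ample room). Third, in the subcases where both factors appear under modulation projections ($\|P_{N_1}Q_{L_1}u\|_{L^2}\,\|P_NQ_{L_2}v\|_{L^2}$ with $L_1\sim L_2\gtrsim N^2N_1$), H\"older in time cannot be applied directly because $Q_Lu$ no longer has compact support; one must interpolate the time-localized $L^2$ bound for $P_{N_1}u$ against the modulation-weighted bound, giving up part of the $L_1^{-1/2}$ gain, and this --- exactly the delicate step --- is left unexamined in your plan.

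For comparison, the paper sidesteps these issues with a different device: for $w$ compactly supported in time it uses \eqref{dsg} to get $\|P_Nw\|_{L^2_tH^{3/4}}\lesssim\|P_Nw\|_{X_p^{0,3/8,2}}\lesssim T^{\mu(1/8)}\|P_Nw\|_{\mathcal{S}^0}$, i.e.\ the $T$-gain is extracted by lowering the modulation exponent on the $\mathcal{S}^0$ factor, which is compatible with the $Q_L$ localizations, and it then reruns the (HL) cases with this single tool. Your plan is likely repairable along the same lines (always place the gain on $u$, and trade against the modulation weight rather than using H\"older when $Q_L$'s are present), but as written it relies on an unavailable estimate, misidentifies where the slack lies, and leaves the genuinely tight subcases open, so it does not yet constitute a proof.
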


\begin{remark}
	For any $\theta >0$, there exists $\mu = \mu(\theta) > 0$ such that for any smooth function $f$ with compact support in time in $[-T,T]$. We have to
	\begin{equation}\label{dsg}
	\left\| \mathcal{F}_{\tau,\xi}^{-1}\left\{ \dfrac{\hat{f}(\tau,\xi)}{\langle{\tau - \xi^3}\rangle}\right\}\right\|_{L^{2}_{t,x}} \lesssim T^{\mu} \| f \|_{L^{2}_{t,x}}.
	\end{equation}
\end{remark}
The proof of this remark can be found in \cite{GTV} (Lemma 3.1) and \cite{MR} (Lemma 3.6).
\begin{proof}[Proof of Proposition \ref{XXav1}]
	This proof is very similar with the proof of the Proposition \ref{bilinear}. For the sackness of completes we will proof the proposition in the more difficult case: (H L)\\
	i) $L_{max}= L$.  In this case observe that
	\begin{align}
	\sum_{\smath{1\lesssim N_1\lesssim N}}\normaNp{\PN\partial_x(\PNU u\,\PN v)}\lesssim {}& \sum_{\smath{1\lesssim N_1\lesssim N}}\ \sum_{\smath{L\gtrsim N^2N_1}}\normaXp{\PN\QL\partial_x(\PNU u\,\PN v)}    \nonumber\\
	\lesssim {}& \sum_{\smath{1\lesssim N_1\lesssim N}}\ \sum_{\smath{L\gtrsim N^2N_1}} \cjap{N}^{-p/2}\cjap{L+N^p}^{-1/2}N\|\PNU u \, \PN v\|_{\!_{L^2}}    \nonumber\\
	\lesssim {}& N^{-p/2+1} \sum_{\smath{1\lesssim N_1\lesssim N}}\|\PNU u \, \PN v\|_{\!_{L^2}}\ \sum_{\smath{L\gtrsim N^2N_1}}L^{-1/2}     \nonumber  \\
	\lesssim {}&  N^{-p/2+1} \sum_{\smath{1\lesssim N_1\lesssim N}}\|\PNU u\|_{\!_{L^2_tL^{\infty}_x}}\|\PN v\|_{\!_{L^\infty_t L^2_x}} N^{-1}{N_1}^{-1/2}      \nonumber\\
	\lesssim {}& \sum_{\smath{1\lesssim N_1\lesssim N}}N_1^{-1/2}\|\PN u\|_{\!_{L^2_tH^{3/4}_x}} \|\PN v\|_{\!_{L^{\infty}_tH\pow{-p/2}_x}}     \nonumber\\
	\lesssim {}& T^{\mu(1/8)}\| \PN u \|_{\mathcal{S}^0}\| \PN v \|_{\mathcal{S}^{-p/2}}  \sum_{\smath{1\lesssim N_1\lesssim N}}N_1^{-1/2}, \nonumber
	\end{align}
	where in the last line was used the following inequality (see \cite{MV}): for any $w \in \mathcal{S}^0$ with compact support in $[-T,T]$
	$$
	\|P_N w\|_{L^2_t H^{3/4}} \lesssim \|P_N w\|_{X_p^{0,3/8, 2}} \lesssim T^{\mu(1/8)}\|P_N w\|_{X^{0,1/2, 2}_p}\lesssim T^{\mu(1/8)}\|P_N w\|_{\mathcal{S}^{0}},
	$$
	that can be proved using \eqref{dsg}.
	\\
	ii) $L_{max}= L_1$.
	
	In the first sum in \eqref{est_bil10} we have
	\begin{align}
	\, \lesssim {}& \sum_{\smath{1\lesssim N_1\lesssim N}}\! \cjap{N}^{-p/2} N\|\PNU Q_{N^2\!N_1}u\, \PN v\|_{\!_{L^1_tL^2_x}}\lesssim \sum_{\smath{1\lesssim N_1\lesssim N}}\! \cjap{N}^{-p/2} N\|\PNU Q_{N^2\!N_1}u\|_{\!_{L^2_tL^{\infty}_x}} \|\PN v\|_{\!_{L^2}} \nonumber\\
	\lesssim {}& \sum_{\smath{1\lesssim N_1\lesssim N}}\! N^{-p/2+1} N_1^{1/2}\|Q_{N^2\!N_1}\PNU u\|_{\!_{L^2_tL^{2}_x}} \|\PN v\|_{\! \mathcal{S}\pow{-p/2}}     \nonumber\\
	\lesssim {}& \sum_{\smath{1\lesssim N_1\lesssim N}}\!N_1^{-p/2+3/4} \|P_{N_1}u\|_{\!_{L^2_tH^{3/4}_x}} \|\PN v\|_{\! \mathcal{S}\pow{-p/2}} \nonumber\\
	\lesssim {}&  T^{\mu(1/8)}\| \PN u \|_{\mathcal{S}^0}\| \PN v \|_{\mathcal{S}^{-p/2}}  \sum_{\smath{1\lesssim N_1\lesssim N}}N_1^{-p/2+3/4}. \nonumber
	\end{align}
	The second sum is estimated similarly and the last sum was treated in the case i).
	\\
	iii) $L_{max}= L_2$.
	In this case is suffice to estimate the first sum (the other cases follows of the above cases i) e ii)) 
	\begin{align}
	\sum_{\smath{1\lesssim N_1\lesssim N}}\,\, \sum_{\ \smath{L_2\!\sim\! N^2N_1}}\!\normaNp{\PN\partial_x(\PNU u\,\PN \QLD v)}\lesssim {}& \sum_{\smath{1\lesssim N_1\lesssim N}}N^{-p/2+1}N_1^{1/2}\|\PNU u\|_{\!_{L^2}}\|\PN Q_{N^2\!N_1} v\|_{\!_{L^2}}    \nonumber\\
	\lesssim {}& \sum_{\smath{1\lesssim N_1\lesssim N}}\!N_1^{-p/2+3/4} \|P_{N_1}u\|_{\!_{L^2_tH^{3/4}_x}} \|\PN v\|_{\! \emph{S}\pow{-p/2}} \nonumber\\
	\lesssim {}&  T^{\mu(1/8)}\| \PN u \|_{\mathcal{S}^0}\| \PN v \|_{\mathcal{S}^{-p/2}}. \nonumber
	\end{align}
\end{proof}

\begin{proposition}
	For any $\beta \geq 1 $ there exists  $ 0 < T= T(\beta) < 1 $  such that for any  $ u, v\in\mathcal{Z}_{\beta}$  with compact support in $ [- T, T ] $, we conclude that
	\begin{equation}\label{n-lin-a}
	\| \mathcal{L}\partial_{x}(u v ) \|_{\mathcal{Z}_{\beta}} \lesssim \| u \|_{\mathcal{Z}_{\beta}} \| v \|_{\mathcal{Z}_{\beta}}.
	\end{equation}
\end{proposition}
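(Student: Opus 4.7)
The plan is to reduce \eqref{n-lin-a} to the two bilinear estimates already proved in the paper, Proposition~\ref{bilinear} (at regularity $-p/2$) and Proposition~\ref{XXav1} (the $T^\nu$ gain when at least one factor sits in $\mathcal{S}^0$). Given arbitrary splittings $u=u_1+u_2$ and $v=v_1+v_2$ with $u_1,v_1\in\mathcal{S}^{-p/2}$ and $u_2,v_2\in\mathcal{S}^0$ (both still supported in $[-T,T]$, which is preserved by a time cut-off absorbed into the constants), I would expand
\begin{equation*}
\mathcal{L}\partial_x(uv)=\mathcal{L}\partial_x(u_1v_1)+\mathcal{L}\partial_x(u_1v_2)+\mathcal{L}\partial_x(u_2v_1)+\mathcal{L}\partial_x(u_2v_2),
\end{equation*}
apply the Duhamel estimate (Proposition~\ref{prop-b}) $\mathcal{L}\colon\mathcal{N}^{-p/2}\to\mathcal{S}^{-p/2}$ to each summand, and then control the $\mathcal{Z}_\beta$-norm by placing the whole sum in the $\mathcal{S}^{-p/2}$ slot of the infimum defining $\|\cdot\|_{\mathcal{Z}_\beta}$.

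For the low-regularity piece, Proposition~\ref{bilinear} gives immediately $\|\mathcal{L}\partial_x(u_1v_1)\|_{\mathcal{S}^{-p/2}}\lesssim \|u_1\|_{\mathcal{S}^{-p/2}}\|v_1\|_{\mathcal{S}^{-p/2}}$. For the two mixed pieces, Proposition~\ref{XXav1} applies directly and produces the gain
\begin{equation*}
\|\mathcal{L}\partial_x(u_1v_2)\|_{\mathcal{S}^{-p/2}}+\|\mathcal{L}\partial_x(u_2v_1)\|_{\mathcal{S}^{-p/2}}\lesssim T^{\nu}\bigl(\|u_1\|_{\mathcal{S}^{-p/2}}\|v_2\|_{\mathcal{S}^0}+\|u_2\|_{\mathcal{S}^0}\|v_1\|_{\mathcal{S}^{-p/2}}\bigr).
\end{equation*}
For the pure high-regularity piece $u_2v_2$ I would invoke Proposition~\ref{XXav1} once more, combined with the continuous embedding $\mathcal{S}^0\hookrightarrow\mathcal{S}^{-p/2}$ (which is transparent from the extra factor $\langle N\rangle^{-p/2}$ appearing in the $X_p^{-p/2,1/2,1}$ and $Y_p^{-p/2,1/2}$ norms compared with their $s=0$ analogues), to obtain
\begin{equation*}
\|\mathcal{L}\partial_x(u_2v_2)\|_{\mathcal{S}^{-p/2}}\lesssim T^{\nu}\|u_2\|_{\mathcal{S}^0}\|v_2\|_{\mathcal{S}^{-p/2}}\lesssim T^{\nu}\|u_2\|_{\mathcal{S}^0}\|v_2\|_{\mathcal{S}^0}.
\end{equation*}

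Summing these four bounds I would obtain
\begin{equation*}
\|\mathcal{L}\partial_x(uv)\|_{\mathcal{Z}_\beta}\lesssim \|u_1\|_{\mathcal{S}^{-p/2}}\|v_1\|_{\mathcal{S}^{-p/2}}+T^{\nu}\bigl(\|u_1\|_{\mathcal{S}^{-p/2}}\|v_2\|_{\mathcal{S}^0}+\|u_2\|_{\mathcal{S}^0}\|v_1\|_{\mathcal{S}^{-p/2}}+\|u_2\|_{\mathcal{S}^0}\|v_2\|_{\mathcal{S}^0}\bigr),
\end{equation*}
which must be matched with the expansion of $\|u\|_{\mathcal{Z}_\beta}\|v\|_{\mathcal{Z}_\beta}$ whose four terms carry the weights $1,\beta^{-1},\beta^{-1},\beta^{-2}$ respectively. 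The whole estimate then follows by choosing $T=T(\beta)<1$ small enough that $CT^{\nu}\leq \beta^{-2}$, which in particular absorbs the milder weight $\beta^{-1}$ appearing in the mixed terms; concretely $T\sim \beta^{-2/\nu}$ suffices. Passing to the infimum over admissible splittings of $u$ and $v$ yields \eqref{n-lin-a}. The main obstacle, and the sole reason the allowed time window shrinks with $\beta$, is the need to dominate the $\beta^{-2}$ weight coming from the $u_2v_2$ interaction purely by the $T^{\nu}$ factor of Proposition~\ref{XXav1}, since the analogous bilinear estimate in $\mathcal{N}^0$ (which would let us place $\mathcal{L}\partial_x(u_2v_2)$ in the $\mathcal{S}^0$ slot with weight $\beta^{-1}$) is not available here.
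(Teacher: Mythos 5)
Your argument is correct and follows essentially the same route as the paper: take a near-optimal splitting $u=u_1+u_2$, $v=v_1+v_2$, bound every piece of $\mathcal{L}\partial_x(uv)$ in the $\mathcal{S}^{-p/2}$ slot via Proposition~\ref{prop-b}, use Proposition~\ref{bilinear} for $u_1v_1$ and the $T^{\nu}$-estimate of Proposition~\ref{XXav1} (with the embedding $\mathcal{S}^0\hookrightarrow\mathcal{S}^{-p/2}$) for the mixed and $u_2v_2$ terms, and absorb the $\beta$ and $\beta^2$ weights by choosing $T\lesssim\beta^{-2/\nu}$, exactly as in the paper (which merely groups the two mixed terms into a single term $(II)$ requiring only $T\leq\beta^{-1/\nu}$). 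Your closing observation about the $u_2v_2$ interaction being the reason for the $\beta$-dependent time window matches the paper's logic as well.
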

\begin{proof}
	For $u, v\in\mathcal{Z}_{\beta}$, by definition of the infimum exist $ u_1, v_1\in \mathcal{S}^{- p/2},\;u_2, v_2\in \mathcal{S}^0$ such that 
	\begin{eqnarray}
	u = & u_1 + u_2 , \\
	v = & v_1 + v_2 ,
	\end{eqnarray}
	satisfying 
	\begin{equation}\label{aau}
	\| u \|_{\mathcal{Z}_{\beta}} < \| u_1 \|_{\mathcal{S}^{- p/2}} + \dfrac{1}{\beta}\| u_2 \|_{\mathcal{S}^{0}} \leq 2 \| u \|_{\mathcal{Z}_{\beta}},
	\end{equation}
	and
	\begin{equation}\label{aav}
	\| v \|_{\mathcal{Z}_{\beta}} < \| v_1 \|_{\mathcal{S}^{- p/2}} + \dfrac{1}{\beta}\| v_2 \|_{\mathcal{S}^{0}} \leq 2 \| v \|_{\mathcal{Z}_{\beta}}.
	\end{equation}
	
	Moreover, 
	\begin{equation}\label{aauv}
	\| \mathcal{L}\partial_{x}( u v ) \|_{\mathcal{Z}_{\beta}} \lesssim \| \mathcal{L}\partial_{x}( u_1 v_1 ) \|_{\mathcal{Z}_{\beta}} + \| \mathcal{L}\partial_{x}( u_1 v_2 + u_2 v_1) \|_{\mathcal{Z}_{\beta}} + \| \mathcal{L}\partial_{x}( u_2 v_2 ) \|_{\mathcal{Z}_{\beta}},
	\end{equation}
	where $\mathcal{L} $  is defined in Proposition~\ref{prop-b}.\\
	
	Because $ || u ||_{\mathcal{Z}_{\beta}} \sim || u ||_{\mathcal{S}^{- p/2}},\quad  \text{for}\ u\in \mathcal{S}^{- p/2}$, from \eqref{aauv}, we obtain that
	\begin{equation}\label{aaluv}
	\begin{array}{ll}
	\| \mathcal{L}\partial_{x}( u v ) \|_{\mathcal{Z}_{\beta}} & \lesssim  \| \mathcal{L}\partial_{x}( u_1 v_1 ) \|_{\mathcal{S}^{- p/2}} + \| \mathcal{L}\partial_{x}( u_1 v_2 + u_2 v_1) \|_{\mathcal{S}^{- p/2}} + \| \mathcal{L}\partial_{x}( u_2 v_2 ) \|_{\mathcal{S}^{- p/2}},\\
	&  \lesssim (I) + (II) + (III).
	\end{array}
	\end{equation}
	Now let's estimate each term on the right-hand side. Applying the results \eqref{op} and \eqref{est_bil}, we obtain
	\begin{equation} \label{Iuv}
	\begin{array}{ll}
	\quad (I) & \lesssim \| \partial_{x}(u_1 v_1)\|_{\mathcal{N}^{- p/2}} \lesssim  \| u_1\|_{\mathcal{S}^{- p/2}} \| v_1\|_{\mathcal{S}^{- p/2}},\\
	\quad     & \lesssim \| u_1 \|_{\mathcal{Z}_{\beta}}\| v_1 \|_{\mathcal{Z}_{\beta}}.
	\end{array}
	\end{equation}
	In the third term on the right-hand side, applying  \eqref{nao-lin}, we get
	\begin{equation} \label{IIIuv}
	\begin{array}{ll}
	\quad (III) & \lesssim \| \partial_{x}(u_2 v_2)\|_{\mathcal{N}^{- p/2}} \lesssim  T^{\nu} \| u_2\|_{\mathcal{S}^{0}} \| v_2\|_{\mathcal{S}^{- p/2}},\\
	\quad         & \lesssim  T^{\nu} \| u_2\|_{\mathcal{S}^{0}} \| v_2\|_{\mathcal{S}^{0}}.
	\end{array}
	\end{equation}
	By \eqref{aau}, \eqref{aav} and \eqref{IIIuv}, we have  that
	\begin{equation} \label{IIIuva}
	\begin{array}{ll}
	\quad (III)  & \lesssim  T^{\nu}4 \beta^2  \| u\|_{\mathcal{Z}_{\beta}} \| v\|_{\mathcal{Z}_{\beta}},\\
	\quad         & \lesssim \| u\|_{\mathcal{Z}_{\beta}} \| v\|_{\mathcal{Z}_{\beta}},
	\end{array}
	\end{equation}
	if \quad $ 0 < T \leq \beta^{- 2/\nu} < 1$.
	
	In the second term on the right-hand side, we have that
	\begin{equation} \label{IIuvb}
	\begin{array}{ll}
	(II)  & \lesssim \| \partial_{x}(u_1 v_2)\|_{\mathcal{N}^{- p/2}} + \| \partial_x( u_2v_1)\|_{\mathcal{N}^{- p/2}},\\
	\quad & \lesssim T^{\nu} 4\beta \| u\|_{\mathcal{Z}_{\beta}} \| v\|_{\mathcal{Z}_{\beta}} \\
	\quad & \lesssim \| u\|_{\mathcal{Z}_{\beta}} \| v\|_{\mathcal{Z}_{\beta}},
	\end{array}
	\end{equation}
	if \quad $ 0 < T \leq \beta^{- 1/\nu} < 1$.
	
\end{proof}

We define the operator
\[ F^{T}_{u_0} : u\in\mathcal{Z}_{\beta}\, \mapsto \,\eta(t)W_p(t,\,t)u_0 - \eta(t)\mathcal{L} \partial_{x}(\eta_T u)^2.  \]
We will show that the operator $ F^{T}_{u_0} $ is a contraction on the closed ball $B_{R}:= \left\{ w\in\mathcal{Z}_{\beta}\,:\, \| w \|_{\mathcal{Z}_{\beta}} \leq R \right\} $.

Let $u_0 \in H^{-p/2}$ and $\epsilon>0$, we make the following decomposition
$$
u_0= P_{\lesssim N} u_0+P_{\gg N} u_0,
$$
where $N$ is a dyadic number that we will choose later. By Proposition \ref{prop-a}we have,
$$
\|\eta(t)W_p(t,t)P_{\gg N} u_0 \|_{\mathcal{Z}_{\beta}} \sim \|\eta(t)W_p(t,t)P_{\gg N} u_0 \|_{\mathcal{S}^{-p/2}}\lesssim \|P_{\gg N} u_0 \|_{H^{-p/2}}\leq \epsilon,
$$
where $N=N(\epsilon)$  is large enought.
Also
\begin{equation}
\begin{split}
\|\eta(t)W(t)P_{\lesssim N} u_0 \|_{\mathcal{Z}_{\beta}} \leq&
\frac{1}{\beta} \|\eta(t)W_p(t,t)P_{\lesssim N} u_0 \|_{\mathcal{S}^{0}}\\
\lesssim & \frac{N^{p/2}}{\beta} \|\eta(t)W_p(t,t)P_{\lesssim N} u_0 \|_{\mathcal{S}^{-p/2}}
\\
\lesssim & \frac{N^{p/2}}{\beta} \|P_{\lesssim N} u_0 \|_{H^{-p/2}}
\\
\lesssim & \frac{N^{p/2}}{\beta} \|u_0 \|_{H^{-p/2}}\leq \epsilon,
\end{split}
\end{equation}
if 
$$
\beta \gtrsim  \frac{N^{p/2}}{\epsilon} \|u_0 \|_{H^{-p/2}}.
$$
Thus, for $u \in B_R$,  we obtain
\begin{equation}
\begin{split}
\| F^{T}_{u_0}u \|_{\mathcal{Z}_{\beta}} \leq &  C\epsilon+C \|\eta_T u \|_{\mathcal{Z}_{\beta}}^2\\
\leq &\dfrac{R}{2}+CR^2\\
\leq & R,
\end{split}
\end{equation}
where was considered $R=2C \epsilon$, $0<\epsilon < 1/(4C^2)$. Also,
\begin{equation}
\begin{split}
\| F^{T}_{u_0}u_1- F^{T}_{u_0}u_2 \|_{\mathcal{Z}_{\beta}} \leq &  C \|\eta_T (u_1+u_2) \|_{\mathcal{Z}_{\beta}} \|\eta_T (u_1-u_2) \|_{\mathcal{Z}_{\beta}}\\
\leq &2CR \| (u_1-u_2) \|_{\mathcal{Z}_{\beta}}\\
\leq & 4C^2 \epsilon \| (u_1-u_2) \|_{\mathcal{Z}_{\beta}},
\end{split}
\end{equation}
and taking $4C^2 \epsilon <1$, $F^{T}_{u_0}$ is a contraction . By standard arguments, the uniqueness holds in the space $\mathcal{S}^{-p/2}_T$ endowed with the norm
$$
\|u\|_{\mathcal{S}^{-p/2}_T}:= \inf_{v \in \mathcal{S}^{-p/2}} \{ \| v \|_{\mathcal{S}^{-p/2}}, \quad v \equiv u \,\, \textrm{on} \,\,  ]0, T[ \}.
$$


\section{\sc Ill-posedness results}
\begin{lemma}\label{lem1}
	Let $g: \mathbb{R}^n \to  \mathbb{R}$ be a continuous function and $f: \mathbb{R}^n \to  \mathbb{R}$ be a positive function. If any $x \in  \mathbb{R}^n$, $|g(x)| \geq c_0 \geq 0$, then 
	\begin{equation}\label{eq0}
	\left| \int_{ \mathbb{R}^n} f(x) g(x) dx    \right| \geq c_0   \int_{ \mathbb{R}^n} f(x) dx.
	\end{equation}
\end{lemma}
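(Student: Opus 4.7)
The plan is to reduce the estimate to the observation that a continuous function whose absolute value is bounded below by a strictly positive constant on the connected set $\mathbb{R}^n$ cannot change sign. Once the sign of $g$ is fixed on all of $\mathbb{R}^n$, positivity of $f$ allows pulling the absolute value outside and producing the lower bound directly.

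First, dispose of the trivial case $c_0=0$: then the right-hand side of \eqref{eq0} is zero and the inequality holds with no content. Assume $c_0>0$. Since $|g(x)|\ge c_0>0$ for every $x\in\mathbb{R}^n$, the function $g$ never vanishes. Using that $g$ is continuous and that $\mathbb{R}^n$ is path-connected, the intermediate value theorem (applied along any path joining two points in $\mathbb{R}^n$) forbids $g$ from taking both positive and negative values. Hence either $g(x)\ge c_0$ for all $x$, or $g(x)\le -c_0$ for all $x$.

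In the first case, since $f\ge 0$, one has $f(x)g(x)\ge c_0 f(x)$ pointwise, so integration yields
\begin{equation*}
\int_{\mathbb{R}^n} f(x)g(x)\,dx \;\ge\; c_0\int_{\mathbb{R}^n} f(x)\,dx \;\ge\; 0,
\end{equation*}
and taking absolute values gives \eqref{eq0}. In the second case, $-g(x)\ge c_0$ everywhere, so by the same reasoning applied to $-g$,
\begin{equation*}
-\int_{\mathbb{R}^n} f(x)g(x)\,dx \;\ge\; c_0\int_{\mathbb{R}^n} f(x)\,dx,
\end{equation*}
and again \eqref{eq0} follows because the left-hand side equals $\bigl|\int fg\bigr|$.

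The only conceptual step is the sign-preservation argument, which depends on $\mathbb{R}^n$ being connected and on the strict lower bound $c_0>0$; this is what prevents cancellation inside the integral. Everything else is pointwise manipulation and monotonicity of the integral against the nonnegative weight $f$, so no estimates beyond this observation are needed.
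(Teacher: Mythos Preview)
Your proof is correct. The paper states this lemma without proof, treating it as an elementary observation; your argument via connectedness of $\mathbb{R}^n$ and the intermediate value theorem is exactly the natural way to justify it, and it also makes clear why the real-valuedness hypothesis on $g$ is essential (as the paper's subsequent remark emphasizes).
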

\begin{remark}
	Observe that the estimate \eqref{eq0} in Lemma \ref{lem1} is false if $g$ is a complex valued function. In fact, if we consider $n=1$, $g(x)=e^{ix}$ and $f(x)=\chi_{[-\pi, \pi]}(x)$ the hypotheses of Lemma  \ref{lem1} are satisfied but the estimate  \eqref{eq0} does not hold. Also  \eqref{eq0} is false if $f$ is a positive function such that there exist $\xi$ with $|\widehat{f} (\xi)| <\widehat{f} (0)$ and $g(x)=e^{-ix\xi}$.
\end{remark}
Let $N \gg 1$, $I_N=[N,N+2]$, $\alpha=p/2$ and
\begin{equation}\label{eq1}
\widehat{\Phi_N} (\xi)=N^{\alpha}(\chi_{I_N}(\xi)+\chi_{I_N}(-\xi)),
\end{equation}
then 
\begin{equation}\label{eq2}
\|\Phi_N\|_{H^{-p/2}}^2=2N^{2\alpha} \int_{|\xi| \sim N}\langle \xi  \rangle^{-p} d\xi \sim 1, 
\end{equation}
and 
\begin{equation}\label{eq3}
\|\Phi_N\|_{H^{s}}^2=2N^{2\alpha} \int_{|\xi| \sim N}\langle \xi  \rangle^{ } d\xi \sim N^{p + 2s},
\end{equation}
then $ \Phi_N \to 0$ in $H^s$ if $ s<-\frac{p}2$.

For $t>0$ we define
\begin{equation}\label{eq4}
A_2(t,h,h)= \int_0^t S_p(t-t')\partial_x (S_p(t')h)^2 dt', 
\end{equation}
taking Fourier transform we have
\begin{equation}\label{eq5}
\mathcal{F}_x(A_2(t,\Phi_N,\Phi_N))(\xi)=i \xi e^{-t|\xi|^p +it\xi^3} \int_{\mathbb{R}}\widehat{\Phi_N} (\xi_1)\widehat{\Phi_N} (\xi- \xi_1)\left(\dfrac{ e^{t \varphi}-1 }{\varphi}  \right) d\xi_1,
\end{equation}
where $\varphi=\varphi_1+i \varphi_2:=|\xi|^p-|\xi_1|^p-|\xi-\xi_1|^p +i(-\xi^3 +\xi_1^3+(\xi-\xi_1)^3)$,
and consequently
\begin{equation}\label{eq6}
\|A_2(t,\Phi_N,\Phi_N)\|_{H^s}^2=\int_{\mathbb{R}}    N^{2p}  e^{-2t|\xi|^p}  \langle  \xi  \rangle^{ 2s}|\xi|^2 \left|  \int_{K_{\xi}}\dfrac{ e^{t \varphi}-1 }{\varphi} d\xi_1 \right|^2  d\xi,
\end{equation}
where
$$
K_{\xi}=\{\xi_1  \,/   \, \xi-\xi_1 \in I_N, \quad  \xi_1 \in -I_N  \}\cup \{\xi_1   \,/ \,   \xi-\xi_1 \in -I_N, \quad  \xi_1 \in I_N  \}.
$$
We note that if $|\xi| \leq 1/2$, then $|K_{\xi}| \geq 1$ and $\xi_1 \in K_{\xi}$ implies $\varphi_2=3\xi \xi_1(\xi-\xi_1 ) \sim -N^2 \xi $.
Let
\begin{equation}\label{eq7}
\begin{split}
 \left| e^{-t|\xi|^p} \int_{K_{\xi}}\dfrac{ e^{t \varphi}-1 }{\varphi} d\xi_1 \right|
= &  \left| \int_{K_{\xi}}\dfrac{ e^{t(-|\xi_1|^p-|\xi-\xi_1|^p)-3it \xi \xi_1(\xi- \xi_1) }-e^{-t|\xi|^p} }{\varphi_1+i \varphi_2} d\xi_1 \right|\\
:= &  \left| \int_{K_{\xi}}\dfrac{f}{g}\, d\xi_1 \right|\\
\geq &  \left| \int_{K_{\xi}} \textrm{Re} \,\left\{ \dfrac{f}{g}\right\}\, d\xi_1 \right|.
\end{split}
\end{equation}
Observe that
\begin{equation}\label{eq70}
\left| \textrm{Re} \,\left\{ \dfrac{f}{g}\right\}\right|=\dfrac{|  \textrm{Re} \,f  \, \textrm{Re} \,g+   \textrm{Im} \,f \,  \textrm{Im} \,g  |}{|g|^2}\geq \dfrac{|  \textrm{Re} \,f  \, \textrm{Re} \,g |}{|g|^2}-\dfrac{| \textrm{Im} \,f |}{|g|}.
\end{equation}
For $0<\gamma \ll 1$, $N\gg 1$, $|\xi|\sim   \gamma$ and $\xi_1 \in K_{\xi}$, one can obtain
\begin{equation}\label{eq8}
\begin{split}
\textrm{Re} \,f= \textrm{Re} \,\{ e^{t(-|\xi_1|^p-|\xi-\xi_1|^p)-3it \xi \xi_1(\xi- \xi_1)  }-e^{-t|\xi|^p}\} \leq &  e^{t(-|\xi_1|^p-|\xi-\xi_1|^p)  }-e^{-t\gamma^p}\\
\leq & e^{-ctN^p  }-e^{-t\gamma^p}\\
\leq & \dfrac{-e^{-t\gamma^p}}2,
\end{split}
\end{equation}
\begin{equation}\label{eq9}
\begin{split}
| \textrm{Re} \,g|=|\varphi_1|= |\, |\xi|^p -|\xi_1|^p-|\xi-\xi_1|^p\, |\sim N^p, \quad | \textrm{Im} \,g|=|\varphi_2|\sim N^2 |\xi|
\end{split}
\end{equation}
and also 
\begin{equation}\label{eq10}
\begin{split}
|\textrm{Im} \,f|= |\textrm{Im} \,\{ e^{t(-|\xi_1|^p-|\xi-\xi_1|^p)-3it \xi \xi_1(\xi- \xi_1)  }-e^{-t|\xi|^p}\}| \leq e^{-ctN^p}.
\end{split}
\end{equation}
using \eqref{eq70}-\eqref{eq10} and $N \gg 1$, it follows that
\begin{equation}\label{eq11}
\left| \textrm{Re} \,\left\{ \dfrac{f}{g}\right\}\right| \gtrsim \dfrac{e^{-t\gamma^p} N^p}{ N^{2p}+\gamma^2N^4}-  \dfrac{e^{-ct N^p}}{ N^{p}+\gamma N^2} \gtrsim  \dfrac{e^{-t\gamma^p} N^p}{ N^{2p}+\gamma^2N^4}.
\end{equation}
Considering $\gamma \sim 1$,  $p \geq 2$ and combining \eqref{eq6}, \eqref{eq7} and \eqref{eq11} we have
\begin{equation}\label{eq12}
\begin{split}
\|A_2(t,\Phi_N,\Phi_N)\|_{H^s}^2 \gtrsim & \int_{|\xi|\sim \gamma}    N^{2p}   \langle  \xi  \rangle^{2s }|\xi|^2 \left|\dfrac{e^{-t\gamma^p} N^p}{ N^{2p}+\gamma^2N^4}\right|^2  d\xi\\
\gtrsim &    N^{2p}  \dfrac{e^{-2t\gamma^p} N^{2p}}{ N^{4p}+\gamma^4N^8}  \int_{|\xi|\sim \gamma} \langle  \xi  \rangle^{2s }|\xi|^2  d\xi \\
\gtrsim &      \dfrac{\gamma^3 e^{-2t\gamma^p} N^{4p}}{ N^{4p}+\gamma^4N^8}.
\end{split}
\end{equation}
Therefore
\begin{equation}\label{1eq12}
\begin{split}
\|A_2(t,\Phi_N,\Phi_N)\|_{H^s}\geq     C_0>0.
\end{split}
\end{equation}

By Theorem \ref{teo1}, there exist $T>0$ and $0<\epsilon_0\ll 1$ such that for any $|\epsilon| \leq \epsilon_0$, any $\|h\|_{H^{-p/2}} \leq 1$ and $t \in [0,T]$,
\begin{equation}\label{eq13}
u(t, \epsilon h)= \epsilon S_p(t)h+ \sum_{k=2}^{\infty} \epsilon^k A_k(t, h^k),
\end{equation}
where $h^k:=(h, \dots, h)$, $h^k \to A_k(t, h^k)$ is a k-linear continuous map from $\left[H^{-p/2}(\R)\right]^k$ into $C([0,T]; \, H^{-p/2}(\R)\,)$ and the series converges absolutely in $C([0,T]; \, H^{-p/2}(\R)\,)$.
If $h=\Phi_N$, from \eqref{eq13} we get
\begin{equation}\label{eq14}
u(t, \epsilon \Phi_N)-\epsilon^2 A_2 (t, \Phi_N, \Phi_N)= \epsilon S(t) \Phi_N+ \sum_{k=3}^{\infty} \epsilon^k A_k(t,  \Phi_N^k),
\end{equation}
if $k\geq 3$, then $| \epsilon|^k=| \epsilon|^3| \epsilon|^{k-3 } \leq | \epsilon|^3| \epsilon_0|^{k-3 }$, thus using \eqref{eq2} we obtain
\begin{equation}\label{eq15}
\begin{split}
\|\sum_{k=3}^{\infty} \epsilon^k A_k(t,  \Phi_N^k)\|_{H^{-p/2}}\leq  | \epsilon|^3 \sum_{k=3}^{\infty}| \epsilon_0|^{k-3 }\| A_k(t,  \Phi_N^k)\|_{H^{-p/2}}\lesssim  | \epsilon|^3 \sum_{k=3}^{\infty}| \epsilon_0|^{k-3 }\| \Phi_N\|_{H^{-p/2}}^k\lesssim  | \epsilon|^3. 
\end{split}
\end{equation}
Now combining \eqref{eq3}, \eqref{eq14} and \eqref{eq15} we conclude that, for $s<-\frac{p}2$,
\begin{equation}\label{eq16}
\begin{split}
\epsilon^2 \| A_2 (t, \Phi_N, \Phi_N)  \|_{H^{s}}- \|   u(t, \epsilon \Phi_N) \|_{H^{s}} \leq  \left\|   u(t, \epsilon \Phi_N)-\epsilon^2 A_2 (t, \Phi_N, \Phi_N)   \right\|_{H^{s}} \leq  C | \epsilon|^3+C|\epsilon| N^{s+p/2},
\end{split}
\end{equation}
hence
\begin{equation}\label{eq17}
\begin{split}
\|   u(t, \epsilon \Phi_N) \|_{H^{s}} \geq \epsilon^2 \| A_2 (t, \Phi_N, \Phi_N)  \|_{H^{s}}-  C | \epsilon|^3-C|\epsilon| N^{s+p/2},
\end{split}
\end{equation}
and considering \eqref{1eq12}

\begin{align}
\|   u(t, \epsilon \Phi_N) \|_{H^{s}} &\geq \epsilon^2C_0-  C | \epsilon|^3-C |\epsilon|N^{s+p/2}\geq \epsilon^2(C_0-C|\epsilon|)-C |\epsilon|N^{s+p/2},\nonumber\\
& \geq\frac{3}{4}C_0\epsilon^2-C|\epsilon|N^{s+p/2}, \label{eq18}
\end{align}

if $|\epsilon|<c_0/(4C)$. Since  $u(t,0)=0$ and $ \Phi_N \to 0$ in $H^{s}$ for  $s<-\frac{p}2$ and $p\geq 2$, thus we conclude that the flow-map from $H^s(\R)$ into $H^s(\R)$, for $s<-p/2$ and $p\geq 2$, is discontinuous at the origin by letting $N$ tend to infinity.

Finally, to prove the Remark \ref{c2ill}, it is suffices to follow the steps \eqref{eq1}-\eqref{1eq12} with  $
	\widehat{\Phi_N} (\xi)=N^{\alpha}(\chi_{I_N}(\xi)+\chi_{I_N}(-\xi))
$ for $N \gg 1$, $I_N=[N,N+2]$, $\alpha>0$ (to be choosen later). Indeed, for $s\leq -\alpha$ $$\|\Phi_N\|_{H^s}\sim N^{s+\alpha}\lesssim 1\ \ \ \ \ \textrm{and}\ \ \ \|A_2(t,\Phi_N,\Phi_n)\|_{H^s}\geq \frac{N^{4\alpha} N^{2p}\gamma^3}{N^{4p}+\gamma^4N^8}.$$
So, taking $\gamma\sim N^{p-2}\leq 1$ (in order to get $N^{4p}\sim \gamma^4N^8$) we obtain $\|A_2(t,\Phi_N,\Phi_n)\|_{H^s}\geq C_0$, if $\alpha=3/2-p/4$ when $0\leq p\leq 2$.


%
\renewcommand{\refname}{\sc References}


\appendix
\section{Elementary estimates}
\begin{lemma}[Bernstein type estimates]
		\begin{equation}\label{Bernst1}
		\|P_N(fg)\|_{L_x^2} \lesssim N^{1/2}\|f\|_{L_x^2}\|g\|_{L_x^2},
		\end{equation}
		\begin{equation}\label{Bernst2}
		\|P_{N}f \, P_{N_1}g\|_{L_x^2} \lesssim N_1^{1/2}\|P_{N}f\|_{L_x^2}\|P_{N_1}g\|_{L_x^2},
		\end{equation}
		\begin{equation}\label{Bernst3}
		\|P_{N} \left( Q_{L} \, P_{N_1}f \, P_{N_1}g \right)\|_{L^2} \lesssim N^{1/2}L^{1/2}\|P_{N_1}f\|_{L^2}\|P_{N_1}g\|_{L^2},
		\end{equation}
		\begin{equation}\label{Bernst4}
		\|P_{N} Q_{\gtrsim \lambda}f \|_{L^2} \lesssim  \dfrac{1}{\lambda^{1/2}}\|P_{N}f\|_{\mathcal{S}^0}.
		\end{equation}
	\end{lemma}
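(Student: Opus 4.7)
The four inequalities are standard Bernstein-type estimates; the plan is to treat them in order. For \eqref{Bernst1} I will use Plancherel together with Young's inequality in Fourier variables: since $\widehat{P_N(fg)}(\xi) = \varphi_N(\xi)(\hat f\ast \hat g)(\xi)$ and $\|\hat f\ast \hat g\|_{L^\infty_\xi}\leq \|\hat f\|_{L^2}\|\hat g\|_{L^2}$, multiplication by $\varphi_N$ followed by $L^2_\xi$ picks up the factor $\|\varphi_N\|_{L^2_\xi}\sim N^{1/2}$. Then \eqref{Bernst2} follows from H\"older in $x$, $\|P_N f\cdot P_{N_1}g\|_{L^2_x}\leq \|P_N f\|_{L^2_x}\|P_{N_1}g\|_{L^\infty_x}$, together with the classical Bernstein inequality $\|P_{N_1}g\|_{L^\infty_x}\lesssim N_1^{1/2}\|P_{N_1}g\|_{L^2_x}$ applied to the spectrally localized factor.

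For \eqref{Bernst3} I plan to first apply Bernstein in $x$ to the output (which is spatial-Fourier supported in $|\xi|\lesssim N$), passing from $L^2_x$ to $L^1_x$, and then Cauchy-Schwarz in $x$, to get
\[
\|P_N(Q_L P_{N_1}f\cdot P_{N_1}g)\|_{L^2_x}\,\lesssim\, N^{1/2}\|Q_L P_{N_1}f\|_{L^2_x}\|P_{N_1}g\|_{L^2_x}.
\]
Taking $L^2_t$ of both sides and then H\"older in $t$ of the form $L^\infty_t\cdot L^2_t$, the task reduces to proving $\|Q_L P_{N_1}f\|_{L^\infty_t L^2_x}\lesssim L^{1/2}\|P_{N_1}f\|_{L^2}$. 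For this I pass to the spatial Fourier side via Plancherel, apply Minkowski's inequality (since $\infty\geq 2$) to obtain $\|\cdot\|_{L^\infty_t L^2_\xi}\leq \|\cdot\|_{L^2_\xi L^\infty_t}$, and then apply Bernstein in $\tau$ slicewise: for each fixed $\xi$, the time-Fourier of $\mathcal{F}_x(Q_L P_{N_1}f)(\cdot,\xi)$ is supported in $|\tau-\xi^3|\sim L$, an interval of length $\sim L$, which delivers the required factor $L^{1/2}$.

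The last estimate \eqref{Bernst4} requires exploiting the decomposition $\mathcal{S}^0 = X_p^{0,1/2,1}+Y_p^{0,1/2}$. The $X$-contribution is immediate: for any $u_1$ with $P_N u_1 = u_1$, the constraint $L\gtrsim\lambda$ forces $\langle L+N^p\rangle^{1/2}\gtrsim \lambda^{1/2}$, so $\lambda^{1/2}\|P_N Q_{\gtrsim\lambda}u_1\|_{L^2}$ is bounded by the $\ell^1$-sum in $L$ defining $\|P_N u_1\|_{X^{0,1/2,1}_p}$. I expect the $Y$-contribution to be the main obstacle. The trick is to set $h := \mathcal{F}^{-1}\{(i(\tau-\xi^3)+|\xi|^p+1)\varphi_N\tilde u_2\}$, so that $\|h\|_{L^1_t L^2_x}=\|P_N u_2\|_{Y^{0,1/2}_p}$ and $\varphi_N\tilde u_2 = \tilde h/(i(\tau-\xi^3)+|\xi|^p+1)$. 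Computing $\|P_N Q_{\gtrsim\lambda}u_2\|_{L^2_{x,t}}^2$ via Plancherel and integrating in $\tau$ first, the elementary bound $\int_{|u|\gtrsim\lambda}u^{-2}\,du\sim \lambda^{-1}$ yields the required decay, and closing with the Minkowski inequality $\|\tilde h\|_{L^2_\xi L^\infty_\tau}\leq \|h\|_{L^1_t L^2_x}$ gives $\|P_N Q_{\gtrsim\lambda}u_2\|_{L^2}\lesssim \lambda^{-1/2}\|P_N u_2\|_{Y^{0,1/2}_p}$. Taking the infimum over decompositions $P_N f = u_1+u_2$ closes \eqref{Bernst4}.
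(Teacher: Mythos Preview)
Your proof is correct. For \eqref{Bernst1} and \eqref{Bernst2} your argument coincides with the paper's (Plancherel/Young on the Fourier side, resp.\ H\"older plus Bernstein). For \eqref{Bernst3} and \eqref{Bernst4}, however, you take a genuinely different route.

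For \eqref{Bernst3} the paper works entirely on the Fourier side: it writes the space--time transform as a convolution integral, pulls out the $(\xi_1,\tau_1)$-integral by Minkowski, and then applies Cauchy--Schwarz twice (once in $\tau_1$ to extract $L^{1/2}$ via $\|\psi_L(\xi_1,\cdot)\|_{L^2_\tau}\sim L^{1/2}$, once in $\xi_1$). Your physical-side argument --- Bernstein in $x$ on the output to gain $N^{1/2}$, then H\"older $L^\infty_t\cdot L^2_t$, then slicewise Bernstein in $t$ on $Q_L P_{N_1}f$ to gain $L^{1/2}$ --- is arguably cleaner and makes the origin of the factor $N^{1/2}$ (as opposed to $N_1^{1/2}$) completely transparent; this matters since \eqref{Bernst3} is invoked in the high--high regime $N\ll N_1$.

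For \eqref{Bernst4} the paper's proof is much shorter because it simply quotes the already-established estimate \eqref{elem03}, namely $\bigl(\sum_L L\|Q_L u\|_{L^2}^2\bigr)^{1/2}\lesssim \|u\|_{\mathcal{S}^0}$, and combines it with Cauchy--Schwarz in $L$ over $L\gtrsim\lambda$:
\[
\|P_N Q_{\gtrsim\lambda}f\|_{L^2}\ \lesssim\ \Bigl(\sum_{L\gtrsim\lambda}L\|P_NQ_L f\|_{L^2}^2\Bigr)^{1/2}\Bigl(\sum_{L\gtrsim\lambda}L^{-1}\Bigr)^{1/2}\ \lesssim\ \lambda^{-1/2}\|P_Nf\|_{\mathcal{S}^0}.
\]
Your argument instead unpacks the sum space $\mathcal{S}^0=X_p^{0,1/2,1}+Y_p^{0,1/2}$ and treats the two pieces by hand; this is self-contained (it does not rely on Lemma~\ref{lema2}) and in effect reproves the relevant part of \eqref{elem03} along the way. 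Both approaches are valid; the paper's buys brevity by reusing earlier work, yours buys independence from it.
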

	\begin{proof}
		In order to prove \eqref{Bernst1}, using Plancherel's identity and properties of Fourier transform
		\begin{equation*}
		\|P_N(fg)\|_{L_x^2} =\|\varphi_N \|_{L_x^2}\, \| \mathcal{F}_x\{fg\}\|_{L_x^\infty} \lesssim N^{1/2}\|f\|_{L_x^2}\|g\|_{L_x^2},
		\end{equation*}
		The inequality \eqref{Bernst2} is a consequence of properties of convolution and Cauchy-Schwartz inequality:
		\begin{equation*}
		\|P_{N}f \, P_{N_1}g\|_{L_x^2} \lesssim \|P_{N}f \|_{L_x^2}  \|\mathcal{F}_x\{P_{N_1}g\}\|_{L_x^1} \lesssim N_1^{1/2}\|P_{N}f\|_{L_x^2}\|P_{N_1}g\|_{L_x^2},
		\end{equation*}
To prove \eqref{Bernst3}, let $\xi_2=  \xi- \xi_1$, $\tau_2= \tau - \tau_1$, using Minkowsky's inequality, properties of the Fourier transform and Cauchy-Schwarz two times, we have,
\begin{equation}\label{1Bernst3}
\begin{split}
\|P_{N} \left( Q_{L} \, P_{N_1}f \, P_{N_1}g \right)\|_{L^2} =&\|\varphi_N(\xi) \int_{\R^2}\varphi_{N_1}(\xi_2) \tilde{g}(\xi_2, \tau_2)\psi_{L_1}(\xi_1, \tau_1)\varphi_{N_1}(\xi_1) \tilde{f}(\xi_1, \tau_1) d\xi_1 d \tau_1\|_{L_{\xi, \tau}^2}\\
 \leq & \int_{\R^2}\|\varphi_N(\xi)\varphi_{N_1}(\xi_2) \tilde{g}(\xi_2, \tau_2)\|_{L_{\xi, \tau}^2}| \psi_{L_1}(\xi_1, \tau_1)\varphi_{N_1}(\xi_1) \tilde{f}(\xi_1, \tau_1)| d\xi_1 d \tau_1 \\
 \leq &\|P_{N_1}g\|_{L^2} \int_{\R}\| \psi_{L_1}(\xi_1, \tau_1)\|_{L^2_{\tau_1}}|\varphi_{N_1}(\xi_1)|\,\| \tilde{f}(\xi_1, \tau_1)\|_{L^2_{\tau_1}} d\xi_1 \\
 \leq &\|P_{N_1}g\|_{L^2}L^{1/2} N^{1/2}\|P_{N_1}f\|_{L^2}\|P_{N_1}g\|_{L^2}.
\end{split}
\end{equation} 
Now we will prove \eqref{Bernst4}. Using Cauchy-Schwarz and inequality \eqref{elem03}, we obtain
\begin{equation}\label{1Bernst4}
\begin{split}
\|P_{N} Q_{\gtrsim \lambda}f \|_{L^2} = &\| P_{N_1} (  \sum_{L \gtrsim \lambda} Q_L f )\|_{L^2}\\
\lesssim & \left(  \sum_{L \gtrsim \lambda} L  \|P_{N} Q_{L}f \|_{L^2}^2 
\right)^{1/2} \left(  \sum_{L \gtrsim \lambda} \dfrac{1}{L} 
\right)^{1/2}\\
\lesssim &\dfrac{1}{\lambda^{1/2}}\|P_{N}f\|_{\mathcal{S}^0}.
\end{split}
\end{equation}
\end{proof}
We also have the following localization properties
\begin{equation}\label{locprop}
\|f\|_{\mathcal{S}^{\theta}} \sim \left(  \sum_{N} \|P_N f\|_{\mathcal{S}^{\theta}}^2 \right)^{1/2},\quad \|f\|_{\mathcal{N}^{\theta}} \sim \left(  \sum_{N} \|P_N f\|_{\mathcal{N}^{\theta}}^2 \right)^{1/2},
\end{equation}
\begin{equation*}
\|P_N f\|_{\mathcal{S}^{\theta}} \sim \langle N\rangle^{\theta}\|P_N f\|_{\mathcal{S}^{0}}\,\,, \quad
\|P_N f\|_{X^{\theta,1/2,1}} \sim \langle N\rangle^{\theta}\|P_N f\|_{X^{0,1/2,1}}\,\,,
\end{equation*}
and 
\begin{equation*}
\|P_N f\|_{Y^{\theta,1/2}} \sim \langle N\rangle^{\theta}\|P_N f\|_{Y^{0,1/2}}.
\end{equation*}
\vspace{5mm}

\end{document}